\documentclass{amsart}
\usepackage{amsmath,amsfonts,amssymb,amsthm,amscd,latexsym,euscript,xypic, verbatim}
\usepackage[all]{xy}
\usepackage{graphics}
\usepackage{epsfig}
\usepackage{multirow}
\hoffset=-1cm
\textwidth=15.5cm
\theoremstyle{plain}

\newtheorem*{thmA}{Theorem A}
\newtheorem*{thmB}{Theorem B}
\newtheorem*{thmC}{Theorem C}
\newtheorem*{thmD}{Theorem D}

\newtheorem*{thm1.2}{(1.2) Theorem}
\newtheorem*{thm1.3}{(1.3) Theorem}
\newtheorem*{thm1.4}{(1.4) Theorem}
\newtheorem*{propA*}{Proposition A}
\newtheorem*{propB*}{Proposition B}
\newtheorem*{thmC*}{Theorem C}
\newtheorem*{propD*}{Proposition D}
\newtheorem{prop}{Proposition}[section]
\newtheorem{thm}[prop]{Theorem}
\newtheorem{cor}[prop]{Corollary}
\newtheorem{lemma}[prop]{Lemma}

\theoremstyle{definition}

\newtheorem{Def}[prop]{Definition}
\newtheorem*{Def*}{Definition}

\newtheorem*{notation*}{Notation}
\newtheorem*{question*}{Question}

\def\dmono{\dto|<\hole|<<\ahook}
\def\rmono{\rto|<\hole|<<\ahook}

\def\repi{\rto|>>\tip}

\def\depi{\dto|>>\tip}

\newcommand{\ra}{\rightarrow}

\newcommand{\Aut}{{\rm Aut}}

\newcommand{\Ker}{{\rm Ker\,}}
\newcommand{\Hom}{{\rm Hom}}

\newcommand{\onto}{\twoheadrightarrow}

\newcommand{\cell}{\text{cell}}

%%%%%%%%%%%%%%%%%%
%%%%%%%%%%%%%%%%%%
%\newcommand{\gensub}[1]{\text{\rm Sub} (#1)}
\newcommand{\gensub}[1]{\text{\rm Sub} (#1)}
\newcommand{\itgensub}[2]{\text{\rm Sub}^{#2}(#1)}
\newcommand{\surgensub}[1]{\text{\rm SurSub}(#1)}
\newcommand{\itsurgensub}[2]{\text{\rm SurSub}^{#2}(#1)}
\newcommand{\cov}[1]{\text{\rm Cov}(#1)}
\newcommand{\itcov}[2]{\text{\rm Cov}^{#2}(#1)}
\newcommand{\surcov}[1]{\text{\rm SurCov}(#1)}
\newcommand{\itsurcov}[2]{\text{\rm SurCov}^{#2}(#1)}
\newcommand{\invquot}[1]{\text{\rm InvQuot} (#1)}
\newcommand{\invsub}[1]{\text{\rm InvSub}(#1)}
%%%%%%%%%%%%%%%%%%
%%%%%%%%%%%%%%%%%%

%\newcommand{\widecheck}[1]{\overset{\mskip1mu\Check\mskip1mu}{#1}
%                \vphantom{#1}}
\numberwithin{equation}{section}
\begin{document}
\title[Idempotent deformations]{Idempotent deformations of  finite groups}
%\title[Value of idempotent functors]{On the value of idempotent functors on finite groups}
\author{M.~Blomgren}
\email{blomgr@kth.se}
\address{Martin Blomgren\\
       Department of Mathematics\\
       KTH\\
       S 10044 Stockholm\\
       Sweden}
\author{ W.~Chach\'{o}lski}
\email{wojtek@math.kth.se}
\address{Wojciech Chach\'{o}lski\\
       Department of Mathematics\\
       KTH\\
       S 10044 Stockholm\\
       Sweden}
\author{E.~D.~Farjoun}
\email{farjoun@math.huji.ac.il}
\address{Emmanuel D.~Farjoun\\
 Department of Mathematics\\
   Hebrew University of Jerusalem, Givat Ram\\
     Jerusalem 91904\\
     Israel}
\author{Y.~Segev}
\email{yoavs@math.bgu.ac.il}
\address{Yoav Segev\\
    Department of Mathematics\\
       Ben-Gurion University\\
       Beer-Sheva 84105\\
       Israel}
% \shortauthors{Blomgren, Chach\'{o}lski, Farjoun, Segev}
 \thanks{The second author partially supported by G\"oran Gustafsson Stiftelse and VR grants 2002-2005 and 2005-2008.}
  \thanks{The third and fourth authors partially supported by the Israel Science Foundation grant no.~712/07}
  \keywords{idempotent functor, cellular cover,
central extensions, Schur multiplier}
%\classification{55P99, 20D05, 19C09}
%\author[Blomgren, Chach\'{o}lski, Farjoun, Segev]
%{M.~Blomgren\quad  W.~Chach\'{o}lski$^1$\quad   E.~D.~Farjoun$^2$\quad Y.~Segev$^2$}
%\address{Martin Blomgren\\
%       Department of Mathematics\\
%       KTH\\
%       S 10044 Stockholm\\
%       Sweden}
%\email{blomgr@kth.se}

%\address{Wojciech Chach\'{o}lski\\
%       Department of Mathematics\\
%       KTH\\
%       S 10044 Stockholm\\
 %      Sweden}
%\email{wojtek@math.kth.se}

%\address{Emmanuel D.~Farjoun\\
 %      Department of Mathematics\\
 %      Hebrew University of Jerusalem, Givat Ram\\
 %      Jerusalem 91904\\
%       Israel}
%\email{farjoun@math.huji.ac.il}

%\address{Yoav Segev\\
%       Department of Mathematics\\
%       Ben-Gurion University\\
%       Beer-Sheva 84105\\
 %      Israel}
%\email{yoavs@math.bgu.ac.il}

%
%
%ABSTRACT
%
%
% \begin{abstract}
%\end{abstract}

%\subjclass[2010]{20D05}
%\subjclass[2010]{20D99}
%\subjclass[2010]{55U99}
%\date{December 26, 2010}
\maketitle

%\footnotetext[1]{Partially supported by G\"oran Gustafsson Stiftelse and VR grants 2002-2005 and 2005-2008.}
%\footnotetext[2]{Partially supported by the Israel Science Foundation grant no.~712/07}

%\renewcommand{\thefootnote}{}
%\footnote{\textit{Key words and phrases.}  idempotent functor, cellular cover,
%central extensions, Schur multiplier}
%\footnote{\textit{2000 Mathematics Subject Classification} Primary: 55P99, 20D05, 19C09}

%%%%%%%%%%%%%%%%%%%%%%%%%%%%%%%%%%%%%%%%%%%%%%%%%%%%
%%%%%%%%%%%%%%%%%%%%%%%%%%%%%%%%%%%%%%%%%%%%%%%%%%%
%%%%%%%%%%%%%%%%%%%%%%%%%%%%%%%%%%%%%%%%%%%%%%%%%%%
\section{Introduction}
%%%%%%%%%%%%%%%%%%%%%%%%%%%%%%%%%%%%%%%%%%%%%%%%%%%
%%%%%%%%%%%%%%%%%%%%%%%%%%%%%%%%%%%%%%%%%%%%%%%%%%
%%%%%%%%%%%%%%%%%%%%%%%%%%%%%%%%%%%%%%%%%%%%%%%%%%%%
One way to understand symmetry of some objects is to look for  what acts on them and study operations on these
objects. In this way  we study symmetry of groups by considering endofunctors $\phi:\text{Groups}\ra\text{Groups}$.  To understand how such an operation $\phi$
deforms  groups we consider natural transformations $\epsilon_G:\phi(G)\ra G$.
A choice of a functor $\phi:\text{Groups}\ra\text{Groups}$ and a natural transformation 
$\epsilon_G:\phi(G)\ra G$ is called an augmented functor and denoted by $(\phi,\epsilon)$.
By iterating  the augmentation we obtain  two homomorphisms $\epsilon_{\phi(G)}:\phi^2(G)\ra \phi(G)$
and $\phi(\epsilon_{G}):\phi^2(G)\ra \phi(G)$.  Among all augmented functors $(\phi, \epsilon)$
there are the idempotent ones for which this iteration process does not
produce anything new and the homomorphisms  $\epsilon_{\phi(G)}$ and  $\phi(\epsilon_{G})$ are isomorphisms for any group $G$.  The universal central
extension of the maximal perfect subgroup of
$G,$ with the natural projection as augmentation, is an  example of  an idempotent functor.
 Idempotent  functors  are related to the  concept of cellularity which was   introduced originally in
homotopy theory and has been used  to
organize information about spaces.
In recent years these functors have been considered in
 algebraic context of groups, chain complexes etc., see for example~\cite{A, Ca1, CaD, CaRoSce, CDFS, CFGS, F, FGS, FGSS, Fl, MP, RoSc}.

The main aim of this paper is to understand how idempotent functor deform  finite groups, particularly the 
simple ones.  Our first  result is
(see Corollary~\ref{cor idemfinfin}, where preservation of  nilpotency and solvability is  also discussed):

%%%%%%%%%%%%%%%%%%%%%%%%%%%%%%%%%%%%%%%%%%%%%%%%%%%%%%%%%%%%%%%%%%%%

%%%%%%%%%%%%%%%

%%%%%%%%%%%%%%%%%%%%%%%%%%%%%%%%%%%%%%%%%%%%%%%%%%%%%%%
\begin{thmA}
%%%%%%%%%%%%%%%%%%%%%%%%%%%%%%%%%%%%%%%%%%%%%%%%%%%%%%%%
Let  $(\phi, \epsilon)$ be an idempotent functor. If $G$ is finite, then so is $\phi(G)$.
\end{thmA}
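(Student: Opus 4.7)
The aim is to realize $\phi(G)$ as a central extension
\[
1 \longrightarrow K \longrightarrow \phi(G) \xrightarrow{\;\epsilon_G\;} H \longrightarrow 1
\]
with $H = \epsilon_G(\phi(G)) \le G$ finite, and to force the kernel $K = \ker(\epsilon_G)$ to be finite by exploiting the idempotency of $\phi$.

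The first step is to unpack idempotency as the bijection
\[
(\epsilon_G)_*\colon \End(\phi(G)) \xrightarrow{\;\cong\;} \Hom(\phi(G), G), \qquad f \mapsto \epsilon_G\circ f,
\]
with inverse $g \mapsto \phi(g)\circ \epsilon_{\phi(G)}^{-1}$ (well defined as $\epsilon_{\phi(G)}$ is an isomorphism, and an inverse by naturality of $\epsilon$ together with $\phi(\epsilon_G)$ being an isomorphism). Centrality of $K$ is then immediate: for any $k \in K$ the inner conjugation $c_k\colon x \mapsto kxk^{-1}$ is an endomorphism of $\phi(G)$ with $\epsilon_G\circ c_k = \epsilon_G$ (because $\epsilon_G(k)=1$), so by injectivity $c_k = \mathrm{id}$ and $k \in Z(\phi(G))$. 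Since $H \le G$ is a finite subgroup, $\phi(G)/K \cong H$ is finite, and so is $\phi(G)/Z(\phi(G))$ as a further quotient of $H$; Schur's classical theorem then gives $|[\phi(G), \phi(G)]| < \infty$.

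The main obstacle is bounding $K$ itself: nothing established so far rules out an infinite central abelian kernel, as abstractly $\phi(G) \cong H \times \zz$ would satisfy everything above. To close the gap I would push the bijection $\End(\phi(G)) \cong \Hom(\phi(G), G)$ further, producing for each nontrivial $k \in K$ a distinguishing endomorphism of $\phi(G)$ (morally, a $k$-twist of the identity built from a splitting-type construction on the extension), and then bounding the resulting set of endomorphisms by homomorphisms into the finite group $G$. Turning this count into an embedding $K \hookrightarrow M$ for a finite cohomological invariant $M$ of $H$ — naturally a subquotient of the Schur multiplier $H^2(H,\zz)$ or a related Ext-group — is where the idempotent structure must be used most sharply, and is the step I expect to require the substantive technical work and probably the more refined machinery developed earlier in the paper.
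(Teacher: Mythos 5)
Your first half is sound and matches the paper: $\epsilon_G$ being a cellular cover (the bijection $\End(\phi(G))\cong\Hom(\phi(G),G)$) yields centrality of $K=\Ker(\epsilon_G)$ by the conjugation trick, the quotient $\phi(G)/K$ embeds in $G$, and Schur's theorem gives finiteness of $[\phi(G),\phi(G)]$. All of this appears in the paper (Propositions \ref{prop charcCmono} and \ref{prop presofsolnilfin}). But the proof stops exactly where the real work begins, and the plan you sketch for finishing it is not one that closes the gap. You never extract the second consequence of injectivity, namely $\Hom(\phi(G),K)=0$: compare the trivial endomorphism of $\phi(G)$ with the composite $\phi(G)\to K\hookrightarrow\phi(G)$ for any homomorphism $\phi(G)\to K$; both become trivial after composing with $\epsilon_G$, so every such homomorphism is trivial. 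This condition is precisely what rules out your own test case $H\times\zz$ (which admits the nontrivial projection onto its central $\zz$), and it is the engine of the paper's finiteness argument. The paper then proves (Corollary \ref{cor finitetest}, resting on the nilpotent-group machinery of Lemma \ref{lemma keynilpotent}) that a central subgroup $K\trianglelefteq X$ with $\Hom(X,K)=0$, $X/K$ finitely generated, and $K\cap\Gamma_j(X)$ finite for some $j$ must satisfy $K\subset\Gamma_i(X)$ for every $i$; taking $i=2$ and using the Schur finiteness of $[X,X]$ that you already established gives $K\subset[X,X]$ finite. The substance is in showing that if $K\not\subset\Gamma_i(X)$ then the nilpotent quotient $X/\Gamma_i(X)$ supports a nontrivial homomorphism from its abelianization into the image of $K$, contradicting $\Hom(X,K)=0$; this is Section 3 of the paper and is not a formality.

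Your proposed alternative route --- producing a distinguishing endomorphism for each $k\in K$ and embedding $K$ into a subquotient of the Schur multiplier $H_2(H)$ --- has a circularity problem as stated. The five-term exact sequence of the central extension $K\to\phi(G)\to H$ only pins $K$ between $H_2(H)$ and $H_1(\phi(G))$: the kernel of $K\to H_1(\phi(G))$ is $K\cap[\phi(G),\phi(G)]$ and is indeed finite, but the image is $K/(K\cap[\phi(G),\phi(G)])$, and bounding that is equivalent to showing the extension is (close to) a stem extension, i.e.\ $K\subset[\phi(G),\phi(G)]$ --- which is exactly the conclusion you are trying to reach. One cannot get $K\hookrightarrow H_2(H)$ without first killing the map $K\to H_1(\phi(G))$, and the only available tool for that is the vanishing $\Hom(\phi(G),K)=0$ together with the divisibility/torsion analysis of Section 3. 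So the proposal is not wrong in what it asserts, but it is incomplete at the decisive step, and the direction indicated for completing it would have to be reorganized around the missing vanishing condition before it could work.
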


In this way finite groups are acted upon by  idempotent functors.
 How complicated is this action?
To measure it, we  study the orbits of this action: 
\begin{Def}
$\text{\rm Idem}(G):=\{\text{isomorphism class of }\phi(G)\ |\ (\phi,\epsilon)\text{ is idempotent}\}.$
\end{Def}

Although the collection of  idempotent functors does  not even form a set,
 the  number of different values idempotent functors can take on
a given  finite group is finite (see Corollary~\ref{cor covidemfinite}):

%%%%%%%%%%%%%%%%%%%%%%%%%%%%%%%%%%%%%%%%%%%%%%%%%%%%%%%%%%%
\begin{thmB}
%%%%%%%%%%%%%%%%%%%%%%%%%%%%%%%%%%%%%%%%%%%%%%%%%%%%%%%%%%%%%%
If $G$ is a finite group, then $\text{\rm Idem}(G)$ is a finite set.
\end{thmB}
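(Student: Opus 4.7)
By Theorem~A, $\phi(G)$ is finite for every idempotent $(\phi,\epsilon)$, so $\text{Idem}(G)$ already consists of isomorphism classes of finite groups. It therefore suffices to bound $|\phi(G)|$ and the structural data of $\phi(G)$ by quantities depending only on $G$; since there are only finitely many isomorphism classes of finite groups of any given order, this will yield Theorem~B.

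To obtain such control I would analyze the augmentation $\epsilon_G\colon \phi(G)\to G$. The two idempotency axioms---that $\epsilon_{\phi(G)}$ and $\phi(\epsilon_G)$ are isomorphisms---endow $\epsilon_G$ with a universal property: every homomorphism from a $\phi$-value to $G$ factors uniquely through $\epsilon_G$. Standard manipulations with this universal property (cf.\ the cellular-cover literature cited in the introduction) then yield two structural facts: (i) the image $H:=\epsilon_G(\phi(G))$ is a subgroup of $G$, and (ii) the kernel $K:=\ker(\epsilon_G)$ lies in the center of $\phi(G)$. Since $G$ is finite, only finitely many subgroups $H$ can arise this way.

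Next I would bound the central kernel $K$ in terms of $H$. Idempotency, in the form $\phi(\epsilon_G)\colon \phi^2(G)\cong \phi(G)$, forces $K$ to embed into a functorially defined finite invariant of $H$, such as a quotient of the Schur multiplier $H_2(H,\zz)$. Consequently $|K|$ is bounded by a function of $|G|$, hence so is $|\phi(G)|=|K|\cdot|H|$. Finally, for each admissible pair $(H,K)$, central extensions $1\to K\to \phi(G)\to H\to 1$ are classified up to isomorphism by a quotient of the finite group $H^2(H,K)$, leaving only finitely many possibilities.

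The main obstacle is extracting from the abstract idempotency axioms a concrete bound on $\ker(\epsilon_G)$: this is the step that distinguishes the idempotent augmentations of $G$ from arbitrary central extensions of subgroups of $G$. Everything else is a matter of observing that finite subgroups of $G$, finite central kernels of bounded order, and finite second cohomology groups combine to produce only finitely many isomorphism classes. Theorem~A is used both at the outset (to know $\phi(G)$ is finite) and implicitly in controlling the central kernel, whose finiteness is itself a special case of Theorem~A applied to $K$ viewed as an abelian subgroup of $\phi(G)$.
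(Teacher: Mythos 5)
Your architecture coincides with the paper's: replace $\phi(G)$ by the cellular cover $\epsilon_G:\phi(G)\ra G$ (Proposition~\ref{prop identifyingidemp}), note that its image is one of the finitely many subgroups $I\subset G$ and that its kernel $K$ is central, and then bound $K$ by a homological invariant of $I$. But the step you yourself flag as ``the main obstacle'' --- that $K$ is controlled by the Schur multiplier of the image --- is precisely where all the work lies, and your proposal asserts it with only a gesture at the isomorphism $\phi(\epsilon_G)$. As written this is a genuine gap: without that bound nothing rules out arbitrarily large central kernels over a fixed image, and the count does not close.

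The paper fills this gap in a way worth comparing with your sketch, because it uses strictly less than idempotency: only the \emph{injectivity} of $\Hom(\phi(G),\epsilon_G)$, i.e.\ the generalized-subgroup property, which is why the paper actually proves finiteness of the larger collection $\gensub{G}$ (Corollary~\ref{cor classificationgsub}). Injectivity alone yields both centrality of $K$ and $\Hom(\phi(G),K)=0$ (Proposition~\ref{prop charcCmono}). The genuinely nontrivial input is then that finiteness of $G$ together with $\Hom(\phi(G),K)=0$ forces $K\subset[\phi(G),\phi(G)]$; this rests on the nilpotent-group machinery of Section~3 (Corollary~\ref{cor finitetest}, applied in Proposition~\ref{prop presofsolnilfin}(2)). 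Once $K$ lies in the commutator subgroup, the connecting homomorphism $K\ra H_1(\phi(G))$ in the five-term exact sequence of the central extension $K\ra\phi(G)\ra I$ is trivial, so the differential $H_2(I)\ra K$ is \emph{surjective} (Proposition~\ref{prop propgsub}(4)); thus $K$ is a quotient of $H_2(I)$ (not merely embedded in one), $|K|\le|H_2(I)|$, and the extension is pinned down by an element of the finite set $\Hom(H_2(I),K)\cong H^2(I,K)$. To complete your proof you must supply the middle step; the first and last are routine.
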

One might then try to enumerate this set. One aim of this paper is to do that for finite simple groups
for which we find that $\text{\rm Idem}(G)$ has in general very few elements (see Corollary~\ref{cor covsimple} and Section 11).  Recall that by functoriality $\text{Aut}(G)$ acts on the Schur multiplier  $H_2(G)$ of $G$.
Let  $\text{Inv}\gensub{H_2(G)}$
denote the set of all subgroups of $H_2(G)$ which are invariant (not necessarily pointwise fixed)
under this action.

%%%%%%%%%%%%%%%%%%%%%%%%%%%%%%%%%%%%%%%%%%%%%%%%%%%%%%%%%%%%%%%%
\begin{thmC}
%%%%%%%%%%%%%%%%%%%%%%%%%%%%%%%%%%%%%%%%%%%%%%%%%%%%%%%%%%%%%%%
Let $G$ be a finite simple group.
There is a bijection between $\text{\rm Idem}(G)$ and the set:
\[\{0\}\coprod \text{\rm Inv}\gensub{H_2(G)}.
\]
\end{thmC}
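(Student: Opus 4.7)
The plan is to set up mutually inverse correspondences between $\text{Idem}(G)$ and $\{0\}\coprod \text{Inv}\gensub{H_2(G)}$. By Theorems A and B, $\phi(G)$ is a finite group, and one works throughout with the augmentation $\epsilon_G\colon\phi(G)\to G$ as the basic datum, exhibiting $\phi(G)$ as an idempotent cellular cover of $G$.

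\emph{From $\text{Idem}(G)$ to the target set.} Given an idempotent $(\phi,\epsilon)$, the image $\epsilon_G(\phi(G))$ is a subgroup of the simple group $G$, hence trivial or all of $G$. In the trivial case, the universal property of $\epsilon_G$ combined with idempotency forces $\phi(G)=1$, which I send to $0$. Otherwise $\epsilon_G$ is surjective; here the key steps are to show that $K=\ker\epsilon_G$ is central in $\phi(G)$ (the essential structural fact for cellular covers of non-abelian simple groups) and that $\phi(G)$ is itself perfect (any abelian quotient would map trivially to $G^{\mathrm{ab}}=1$, hence be a quotient of the central $K$, and idempotency then forces it to vanish). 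Thus $\phi(G)\cong \widetilde{G}/N$ for the universal central extension $\widetilde{G}\to G$ and a unique subgroup $N\le H_2(G)=Z(\widetilde{G})$. Functoriality of $\phi$ lifts each $\alpha\in\Aut(G)$ to an automorphism of $\phi(G)$, hence to an automorphism of $\widetilde{G}$ preserving $N$; so $N\in\text{Inv}\gensub{H_2(G)}$.

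\emph{Inverse map and injectivity.} For $N\in \text{Inv}\gensub{H_2(G)}$ I take $\phi$ to be the cellularisation with respect to $\widetilde{G}/N$; using $\Aut(G)$-invariance of $N$ together with a direct computation of $\Hom(\widetilde{G}/N,G)$, the $\widetilde{G}/N$-cellularisation of $G$ is $\widetilde{G}/N$ itself. For $0$ I use the constant-trivial functor. Injectivity of the forward map follows from the universality of $\widetilde{G}$: any abstract isomorphism $\widetilde{G}/N_1\cong\widetilde{G}/N_2$ descends to an automorphism $\bar\psi$ of the common central quotient $G$, lifts to an automorphism $\tilde\psi$ of $\widetilde{G}$ with $\tilde\psi(N_1)=N_2$, and acts on $H_2(G)$ as the image of $\bar\psi$ under the canonical $\Aut(G)$-action; invariance then forces $N_1=N_2$.

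\emph{Main obstacle.} The delicate step is verifying, in the surjective case, that $\phi(G)$ is \emph{perfect} with \emph{central} kernel, so that the classification of perfect central extensions of $G$ by subgroups of $H_2(G)$ becomes available. Centrality of $K$ should follow from general results on cellular covers of non-abelian simple groups (for example those developed in the earlier sections of the paper), and perfectness from a careful application of idempotency; together they reduce a prima facie open-ended classification to the concrete combinatorics of $\Aut(G)$-invariant subgroups of the Schur multiplier.
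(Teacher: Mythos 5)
Your proposal is correct and follows essentially the same route as the paper: identify $\text{Idem}(G)$ with cellular covers (as in~\ref{prop identifyingidemp}), use simplicity to split into the trivial case and the surjective case, and classify the surjective cellular covers as the quotients of the universal central extension by $\text{Out}(G)$-invariant subgroups of $H_2(G)$, with invariance coming from functoriality (the paper's~\ref{prop diffcovfix}) and the converse from the fact that every non-trivial homomorphism $\phi(G)\ra G$ is the cover composed with an automorphism (the paper's~\ref{lem basicsimplgr}, feeding into~\ref{thm surcovfs}). The only real difference is packaging: you argue directly with perfectness, central kernels and the universal central extension, where the paper routes through its general machinery of generalized subgroups, differentials and $H_{2\setminus 1}$ --- but for a simple group the initial cellular cover is exactly the universal central extension, so the two arguments coincide.
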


The bijection in the above theorem can be described explicitly. The value of an idempotent functor on $G$ could be the trivial group. This corresponds to the element $0$ in the above set. If
the value is not trivial, then it can be constructed as follows: first take the universal central
extension $H_{2}(G)\triangleleft E\ra G$, then, for an invariant subgroup $K\subset H_2(G)$,   take the quotient
$E/K$. Such quotients are exactly the non-trivial values  idempotent functors take on  a simple group $G$.

The composition $( \phi'\phi, \epsilon'_{\phi(-)}\epsilon)$ of  two idempotent functors $(\phi,\epsilon)$ and   $(\phi',\epsilon')$ is not, in general, idempotent.
Such compositions give  new operations on groups. We can then study the orbits of
the action of this broader collection of operations:

\begin{Def}\label{def iteratedidem}
Let $n$ be a positive integer.
\[\text{\rm Idem}^n(G):=\{\text{isomorphism classes of }\phi_1\cdots \phi_n(G)\ |\
\text{for all $i$, }(\phi_i,\epsilon_i)\text{ is idempotent}\}\]

Since the identity functor, with the augmentation given  by the identity, is idempotent, $\text{\rm Idem}^1(G)\subset \text{\rm Idem}^2(G)\subset\text{\rm Idem}^3(G)\cdots$. 
Using this increasing sequence of inclusions, we define:
\[\text{\rm Idem}^{\infty}(G):=\bigcup_{k\geq 1}\text{\rm Idem}^{k}(G).\]
\end{Def}

By applying all idempotent functors to a finite group, according to Theorem B, we get only  finitely many values. We can then apply idempotent functors to these newly obtained groups to get
yet again some finite set of groups. We can keep iterating this procedure. It turns out that this process eventually stabilizes,   the set of values remains unchanged, and by  repeating these operation arbitrary number of times  we  get only a finite number of different isomorphism classes of groups (see Corollary~\ref{cor ideminffin} and Proposition~\ref{prop idemiterfinsimple}):

\begin{thmD}
If $G$ is finite, then $\text{\rm Idem}^{\infty}(G)$ is a finite set. If in addition $G$ is simple, then
$\text{\rm Idem}^{\infty}(G)=\text{\rm Idem}^{2}(G)$.
\end{thmD}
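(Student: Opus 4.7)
The plan has two parts.

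\emph{Finiteness of $\text{\rm Idem}^{\infty}(G)$.} I first note the recursive identity
\[
\text{\rm Idem}^n(G)=\bigcup_{H\in \text{\rm Idem}^{n-1}(G)}\text{\rm Idem}(H).
\]
By Theorem~A every group appearing in $\text{\rm Idem}^{n-1}(G)$ is finite, and by Theorem~B each $\text{\rm Idem}(H)$ is finite. Induction on $n$ then gives that every $\text{\rm Idem}^{n}(G)$ is itself a finite set. Since $\text{\rm Idem}^{\infty}(G)$ is the union of the ascending chain $\text{\rm Idem}^{1}(G)\subseteq\text{\rm Idem}^{2}(G)\subseteq\cdots$, its finiteness is equivalent to the chain stabilising, which in turn is equivalent to producing an a priori bound $N=N(G)$ on $|H|$ for $H\in\text{\rm Idem}^{\infty}(G)$. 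To obtain such a bound I would use the fact, standard for augmented idempotent endofunctors of groups, that the kernel of $\epsilon_H:\phi(H)\ra H$ is always central in $\phi(H)$. Thus each iteration replaces a group $H$ by a central extension of a subgroup of $H$, and the central kernel is controlled by a Schur-multiplier-type invariant of that subgroup. Since all the groups that arise are subquotients of a single fixed envelope built from $G$ by iterating universal central extensions of perfect radicals of subgroups, and $G$ has only finitely many subquotients each with a finite Schur multiplier, all iterates are bounded in terms of $|G|$ alone.

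\emph{The simple case.} If $G$ is finite simple, then by Theorem~C the nontrivial elements of $\text{\rm Idem}(G)$ are precisely the quotients $E/K$, where $E\ra G$ is the universal central extension and $K\subseteq H_{2}(G)=Z(E)$ ranges over the $\Aut(G)$-invariant subgroups. The goal is to prove $\text{\rm Idem}^{3}(G)\subseteq\text{\rm Idem}^{2}(G)$, whence $\text{\rm Idem}^{\infty}(G)=\text{\rm Idem}^{2}(G)$ follows by iteration. The key step is to analyse $\text{\rm Idem}(H)$ for a quasi-simple $H=E/K$. Since $E$ is perfect and satisfies $H_{2}(E)=0$, and $E\ra H$ is a central extension, $E$ is also the universal central extension of $H$. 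A quasi-simple variant of Theorem~C then identifies the nontrivial members of $\text{\rm Idem}(H)$ with quotients $E/K'$ for $\Aut(H)$-invariant subgroups $K'\supseteq K$ of $Z(E)$. For any such $\phi(H)=E/K'$ and any $\psi(G)=E/K=H$, the composition $\phi\circ\psi$ realises $E/K'$ as $\phi(\psi(G))\in\text{\rm Idem}^{2}(G)$, as required.

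\emph{Main obstacles.} The harder step in the general case is the order bound; the recursion alone gives finiteness of each $\text{\rm Idem}^{n}(G)$ but not stabilisation of the chain, and one must rule out the possibility that iteration produces arbitrarily large central extensions. In the simple case the subtle point is to establish the quasi-simple analogue of Theorem~C and to match up $\Aut(H)$-invariance with $\Aut(G)$-invariance, which should follow from functoriality of the universal central extension $E\ra G$ with respect to automorphisms.
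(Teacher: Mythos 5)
Your reduction of the finiteness of each $\text{\rm Idem}^n(G)$ to Theorems A and B is correct, and you rightly identify that the whole difficulty is the stabilization of the chain; but the mechanism you offer for the order bound is not an argument. A value of an idempotent functor on a finite group $H$ is a central extension of an \emph{arbitrary} subgroup $I\le H$ by a quotient of $H_{2\setminus 1}(I)$ (Corollary~\ref{cor classificationgsub}), so a single iteration can increase the order, and nothing in the phrase ``subquotients of a single fixed envelope built from $G$ by iterating universal central extensions of perfect radicals of subgroups'' rules out that passing to new subgroups of the new, larger group and extending again produces unbounded growth; the envelope you invoke is exactly the object whose existence is in question. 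The paper's route is different in kind: it works with iterated generalized subgroups and proves (Theorem~\ref{thm iteratedgsubfinite}) that every composition of generalized subgroups over a finite $G$ can be rewritten with at most $l+1$ factors, $l$ the composition length of $G$. The engine is a dichotomy (Lemma~\ref{lem keycomgsub} together with the homology comparison in the proof of~\ref{thm iteratedgsubfinite}): either two adjacent generalized subgroups compose to a generalized subgroup, shortening the string, or the image in $G$ strictly drops, which can happen at most $l$ times. Since each $\itgensub{G}{n}$ is finite, this yields finiteness of $\itgensub{G}{\infty}$, hence of $\itcov{G}{\infty}$, hence of $\text{\rm Idem}^{\infty}(G)$. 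You would need to supply an argument of this strength; the recursion plus Theorems A and B does not.

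In the simple case your ``quasi-simple variant of Theorem~C'' is false as stated, and this is where your induction breaks. Theorem~C rests on the fact that the image of a cellular cover of a \emph{simple} group is normal, hence trivial or everything; a quasi-simple $H=E/K$ with nontrivial centre admits non-surjective cellular covers whose image lies in $Z(H)$. The paper's own example opening Section~9 is the inclusion ${\mathbf Z}/2\subset \text{\rm SL}_2(q)$, so ${\mathbf Z}/2\in\text{\rm Idem}(\text{\rm SL}_2(q))$ and it is not a quotient $E/K'$ of the perfect group $E$. (Your condition $K'\supseteq K$ is also backwards: central extensions of $E/K$ are $E/K'$ with $K'\subseteq K$.) Consequently your argument only treats the branch in which every cover in the composition is surjective. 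The paper's proof that $\itcov{G}{2}=\itcov{G}{\infty}$ (Proposition~\ref{prop itercovsimple}(3)) must handle separately the case where a composite $c_1c_2$ is trivial, i.e.\ the tail of the string lands in the finite abelian kernel $K_{c_1}$; there it invokes the abelian classification (Proposition~\ref{prop covfabelian}) to show that the entire tail $c_2\cdots c_n$ is already a \emph{single} cellular cover of $X_1$, and only then does $\text{\rm Idem}^{\infty}(G)=\text{\rm Idem}^{2}(G)$ follow (Proposition~\ref{prop idemiterfinsimple}). Without this case your claimed inclusion $\text{\rm Idem}^{3}(G)\subseteq\text{\rm Idem}^{2}(G)$ is not established.
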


%For the convenience of the reader we include here the partition of the
%paper into sections.

%\tableofcontents
%\clearpage

%%%%%%%%%%%%%%%%%%%%%%%%%%%%%%%%%%%%%%%%%
%%%%%%%%%%%%%%%%%%%%%%%%%%%%%%%%%%%%%%%%%%
%%%%%%%%%%%%%%%%%%%%%%%%%%%%%%%%%%%%%%%%%%
\section{Idempotent functors and cellular covers}
%%%%%%%%%%%%%%%%%%%%%%%%%%%%%%%%%%%%%%%%%
%%%%%%%%%%%%%%%%%%%%%%%%%%%%%%%%%%%%%%%%%%%%
%%%%%%%%%%%%%%%%%%%%%%%%%%%%%%%%%%%%%%%%%%%

One well known example of an idempotent functor is  given by the cellularization and many of the present results are
extensions and generalizations of results and technique developed for these functors 
(see for example~\cite{Ca1, CDFS, RoSc,Fl, FGS}.) Recall that,  for any group $A$,
there is a functor $\cell_A\colon\text{Groups}\ra\text{Groups}$
and a natural transformation $c_{A,G}:\cell_AG\to G$.
%This augmentation is the initial object in
%$\text{Groups}|_{G}$ subject to the condition that
This  augmentation is required to fulfil the following properties:
\begin{itemize}
\item
$\text{Hom}(A,c_{A,G}):\text{Hom}(A,\cell_AG)\ra \text{Hom}(A,G)$ is a bijection.

\item
%This means that
For any group homomorphism $f:X\ra Y$ for which $\text{Hom}(A,f)$ is a bijection,
$\Hom(\cell_AG,f)$ is also a bijection.
\end{itemize}

Recall from \cite{FGS} the notion of a {\bf cellular cover}  of a group $G$.
This is a homomorphism $c: A\ra G$ such that
$\text{\rm Hom}(A,c):\text{\rm Hom}(A,A)\ra \text{\rm Hom}(A,G)$
is a bijection.  There are two facts to bare in mind when discussing
cellular covers:
\begin{itemize}
\item
if $c: A\ra G$ is a cellular cover, then $A=\cell_AG$, and

\item
for any idempotent functor of the form $(\cell_A, c_A)$,
$c_{A,G}:\cell_AG\ra G$ is a cellular cover.
\end{itemize}

The purpose of this section is to establish a bijection between
$\text{Idem}(G)$ and equivalence classes of cellular covers of $G$:

%
%
%%%%%%%%%%%%%%%%%%%%%%%%%%%%%%%%%%%%%%%%%%%%%%%%%%%%%%%%%%%%%%%%%
\begin{Def}\label{def cover}
%%%%%%%%%%%%%%%%%%%%%%%%%%%%%%%%%%%%%%%%%%%%%%%%%%%%%%%%%%%%%%%%%%%%
\begin{enumerate}
\item
Two cellular covers $c:A\ra G$ and $d:B\ra G$ are defined  to be equivalent if
there is an isomorphism $h:A\ra B$ for which $dh=c$.
\item
The symbol $\cov{G}$ denotes the collection of equivalence classes of cellular covers of $G$.
\end{enumerate}
\end{Def}

%%%%%%%%%%%%%%%%%%%%%%%%%%%%%%%%%%%%%%%%%%%%%%%%%%%%%%%%%%
%
\begin{lemma}\label{lem isoAB}
%%%%%%%%%%%%%%%%%%%%%%%%%%%%%%%%%%%%%%%%%%%%%%%%%%%%%%%%%%%
\begin{enumerate}
\item
If $c:A\ra G$ and $d:B\ra G$ are cellular covers for which $\Hom(A,d)$
and $\Hom(B, c)$ are bijections, then $c$ and $d$ are equivalent.

\item
If  $c:A\ra G$ and $d:B\ra G$ are cellular covers for which  $A$ and $B$ are isomorphic groups,
 then $c$ and $d$ are equivalent.
\end{enumerate}
\end{lemma}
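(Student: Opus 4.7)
The plan is to prove part (1) directly from the definition of a cellular cover, and then derive part (2) by reducing it to part (1).

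For part (1), the two bijectivity hypotheses produce canonical comparison homomorphisms in both directions. Since $\Hom(A,d)\colon\Hom(A,B)\ra\Hom(A,G)$ is a bijection and $c\in\Hom(A,G)$, there is a unique $f\colon A\ra B$ with $df=c$; symmetrically there is a unique $g\colon B\ra A$ with $cg=d$. The key step is to show that $f$ and $g$ are mutually inverse. I would compose and compute
\[ c\circ(gf)=(cg)\circ f=df=c=c\circ\text{id}_A.\]
Cellularity of $c$ asserts that $\Hom(A,c)\colon\Hom(A,A)\ra\Hom(A,G)$ is a bijection, so at most one endomorphism of $A$ can compose with $c$ to give $c$; hence $gf=\text{id}_A$. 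The mirror computation $d\circ(fg)=(df)\circ g=cg=d=d\circ\text{id}_B$, combined with the analogous cellularity of $d$, yields $fg=\text{id}_B$. Thus $f$ is an isomorphism with $df=c$, establishing equivalence.

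For part (2), the strategy is to transport cellularity across an abstract isomorphism $h\colon A\ra B$ to verify the hypotheses of part (1). Precomposition with $h$ yields a bijection $h^*\colon\Hom(B,X)\ra\Hom(A,X)$ for every group $X$, and naturality gives the commutative square
\[\Hom(A,d)\circ h^*=h^*\circ\Hom(B,d).\]
The top arrow is a bijection by cellularity of $d$, and the two vertical arrows are bijections; hence $\Hom(A,d)$ is a bijection. Applying the same argument with $h^{-1}$ in place of $h$ and $c$ in place of $d$ shows that $\Hom(B,c)$ is also a bijection. Part (1) then delivers the conclusion.

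The only delicate point is the uniqueness step in part (1): one must be careful to invoke cellularity of $c$ (not of $d$) when concluding $gf=\text{id}_A$, and the dual bookkeeping for $fg=\text{id}_B$. Beyond this, the argument is a standard Yoneda-style manipulation and poses no substantive obstacle.
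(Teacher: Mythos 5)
Your proof is correct and follows essentially the same route as the paper: for part (1) you extract the unique comparison maps $f$ and $g$ from the two bijectivity hypotheses and use cellularity of $c$ and $d$ to cancel and conclude $gf=\mathrm{id}_A$, $fg=\mathrm{id}_B$; for part (2) you reduce to part (1). The only difference is that you spell out the transport of the bijectivity hypotheses along the abstract isomorphism $h$ in part (2), which the paper leaves implicit.
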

\begin{proof}
For (1) note that the bijectivity of $\text{Hom}(A,d):\text{Hom}(A,B)\ra \text{Hom}(A,G)$ implies that there is
a unique $h:A\ra B$ for which $dh=c$.  The same argument gives a unique $h':B\ra A$ for which
$ch'=d$.  We thus get equalities $ch'h=c$ and $dhh'=d$ which imply that
$h'h=\text{id}_A$ and $hh'=\text{id}_B$, here we use again that $c$ and $d$ are cellular covers.
Part (2) follows from (1) since if $A$ and $B$ are isomorphic, then the hypothesis of
(1) holds.
\end{proof}

%%%%%%%%%%%%%%%%%%%%%%%%%%%%%%%%%%%%%%%%%%%%%%%%%%%%%%%%%%%%%%%
\begin{prop}\label{prop identifyingidemp}
%%%%%%%%%%%%%%%%%%%%%%%%%%%%%%%%%%%%%%%%%%%%%%%%%%%%%%%%%%%%%%%%
Let $G$ be a group. The function  assigning to the equivalence class of a cellular cover $c: A\ra G$ the group $A$
is a bijection between
$\cov{G}$ and $\text{\rm Idem}(G)$.
\end{prop}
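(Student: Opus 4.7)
The plan is to show directly that the assignment $[c\colon A \to G] \mapsto [A]$ is a well-defined bijection, working only from the two bulleted facts about cellular covers and from Lemma~\ref{lem isoAB}.

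First I would check that the assignment is well-defined and lands in $\text{\rm Idem}(G)$. Well-definedness is immediate: if $c\colon A\to G$ and $d\colon B\to G$ are equivalent cellular covers, then by definition $A\cong B$. The image lies in $\text{\rm Idem}(G)$ because, by the first bullet before Lemma~\ref{lem isoAB}, every cellular cover $c\colon A\to G$ satisfies $A=\cell_A G$, and the functor $(\cell_A,c_A)$ is idempotent.

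Injectivity is then exactly Lemma~\ref{lem isoAB}(2): if $c\colon A\to G$ and $d\colon B\to G$ are cellular covers with $A\cong B$, they are equivalent.

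The substantive part is surjectivity. Given a class $[\phi(G)]\in\text{\rm Idem}(G)$ produced by some idempotent $(\phi,\epsilon)$, I claim the augmentation $\epsilon_G\colon\phi(G)\to G$ is itself a cellular cover; its equivalence class then maps to $[\phi(G)]$. To see this I must verify that
\[
\Hom(\phi(G),\epsilon_G)\colon \Hom(\phi(G),\phi(G))\to\Hom(\phi(G),G)
\]
is a bijection. The only inputs are that $\epsilon_{\phi(G)}$ and $\phi(\epsilon_G)$ are isomorphisms (by idempotence) and that $\epsilon$ is natural. For surjectivity, given $f\colon\phi(G)\to G$, set $\bar f:=\phi(f)\circ\epsilon_{\phi(G)}^{-1}$ and use naturality of $\epsilon$ at $f$ to compute $\epsilon_G\bar f=f\,\epsilon_{\phi(G)}\,\epsilon_{\phi(G)}^{-1}=f$. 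For injectivity, if $h_1,h_2\colon\phi(G)\to\phi(G)$ satisfy $\epsilon_G h_1=\epsilon_G h_2$, apply $\phi$ and use that $\phi(\epsilon_G)$ is an isomorphism to get $\phi(h_1)=\phi(h_2)$; naturality of $\epsilon$ at each $h_i$ gives $h_i\circ\epsilon_{\phi(G)}=\epsilon_{\phi(G)}\circ\phi(h_i)$, and since $\epsilon_{\phi(G)}$ is an isomorphism (hence epic) this forces $h_1=h_2$.

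The main obstacle, and the only place where something beyond formalities is required, is this last step: showing that $\epsilon_G$ is a cellular cover whenever $(\phi,\epsilon)$ is idempotent. Once that is in hand, the bijection between $\cov{G}$ and $\text{\rm Idem}(G)$ follows mechanically from Lemma~\ref{lem isoAB} and the two recalled facts.
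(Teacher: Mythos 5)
Your proposal is correct and follows essentially the same route as the paper: well-definedness via the fact that a cellular cover $c\colon A\to G$ realizes $A$ as $\cell_A G$, injectivity via Lemma~\ref{lem isoAB}(2), and surjectivity by proving that $\epsilon_G\colon\phi(G)\to G$ is a cellular cover using the naturality squares together with the invertibility of $\epsilon_{\phi(G)}$ and $\phi(\epsilon_G)$. The computations you give for surjectivity and injectivity of $\Hom(\phi(G),\epsilon_G)$ are exactly the paper's diagram chases written out explicitly.
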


\begin{proof}
Let $c:A\ra G$ be a cellular cover.  Then, by the remarks above
$A$ is isomorphic to $\cell_AG$ and hence it is the value of
the idempotent functor $(\cell_A, c_A)$. Thus our map takes an equivalence class
of a cellular cover $c:A\ra G$ to an element in $\text{\rm Idem}(G)$.
Also, by Lemma \ref{lem isoAB}(2), our map is injective.

It remains to show that all elements in $\text{\rm Idem}(G)$ are obtained in this way.
Let $(\phi,\epsilon)$ be an idempotent functor.  We claim that $\epsilon_G:\phi(G)\ra G$ is a cellular cover,
i.e., the map of sets  $\text{Hom}(\phi(G),\epsilon_G):\text{Hom}(\phi(G),\phi(G))\ra \text{Hom}(\phi(G),G)$
 is a bijection.

 We show  the surjectivity first.
Let   $f:\phi(G)\ra G$ be a homomorphism. Consider the following commutative square:
\[\xymatrix{
\phi^2(G)\dto_{\epsilon_{\phi(G)}}\rto^{\phi(f)} & \phi(G)\dto^{\epsilon_G}\\
\phi(G)\rto^{f} & G
}\]
Since  $\epsilon_{\phi(G)}$ is an isomorphism,  $f$ factors through $\epsilon_G$. As this happens for any $f$,
$\text{Hom}(\phi(G),\epsilon_G)$ is a surjection.

It remains to show the injectivity of  $\text{Hom}(\phi(G),\epsilon_G)$.
Let  $f,g:\phi(G)\ra \phi(G)$ be two homomorphisms for which  $\epsilon_G f=\epsilon_G g$.
Consider the following commutative diagram:

\[\xymatrix{
\phi^2(G)\ar@<1ex>[r]^{\phi(f)}  \ar@<-1ex>[r]_{\phi(g)}\dto_{\epsilon_{\phi(G)}} & \phi^{2}(G)\dto^{\epsilon_{\phi(G)}}\rto^{\phi(\epsilon_G)} &
\phi(G)\dto^{\epsilon_G}\\
\phi(G)\ar@<1ex>[r]^{f}  \ar@<-1ex>[r]_{g} & \phi(G)\rto^{\epsilon_G} & G
}\]
Since $\epsilon_G f=\epsilon_G g$, then $\phi(\epsilon_G)\phi(f)=\phi(\epsilon_G)\phi(g)$. As
$\phi(\epsilon_G)$ is an isomorphism,  $\phi(f)=\phi(g)$. Consequently
$f\epsilon_{\phi(G)}=g\epsilon_{\phi(G)}$. Again since  $\epsilon_{\phi(G)}$ is an isomorphism, $f=g$.
\end{proof}

According to~\ref{prop identifyingidemp}, for any group $A$ representing  an element in $\text{\rm Idem}(G)$, there is a unique, up to an isomorphism of $A$, homomorphism $c:A\ra G$ which is a cellular cover.

%%%%%%%%%%%%%%%%%%%%%%%%%%%%%%%%%%%%%%%%%%%%%%%%%%%%%
%%%%%%%%%%%%%%%%%%%%%%%%%%%%%%%%%%%%%%%%%%%%%%%%%%
%%%%%%%%%%%%%%%%%%%%%%%%%%%%%%%%%%%%%%%%%%%%%%%%%%
\section{Nilpotent groups}
%%%%%%%%%%%%%%%%%%%%%%%%%%%%%%%%%%%%%%%%%%%%%%%%%%%%
%%%%%%%%%%%%%%%%%%%%%%%%%%%%%%%%%%%%%%%%%%%%%%%%%%%
%%%%%%%%%%%%%%%%%%%%%%%%%%%%%%%%%%%%%%%%%%%%%%%%%%%
If the only group homomorphism from  $G$ to  $X$ is the trivial homomorphism, then we write
$\text{Hom}(G,X)=0$. %If $\text{Hom}(G,H)=0$, then we say that $H$�is $G$-null.
The property of not having any non-trivial homomorphism into a given group is not preserved by subgroups in general. For example $\text{Hom}({\mathbf Q},{\mathbf Z}/p)=0$, however
for the subgroup ${\mathbf Z}\subset {\mathbf Q}$,
$\text{Hom}({\mathbf Z},{\mathbf Z}/p)\not=0$.  Dually the property of not receiving any non-trivial homomorphism  from a given group is not preserved by quotients in general. For example $\text{Hom}({\mathbf Z}/p, {\mathbf Q})=0$,
however for the quotient $\mathbf Q\onto {\mathbf Z}/p^{\infty}$, $\text{Hom}({\mathbf Z}/p,{\mathbf Z}/p^{\infty})\not=0$. The reason  is that the subgroup  $\mathbf Z$ and the quotient ${\mathbf Z}/p^{\infty}$ of  $\mathbf Q$
are too ''small''. The aim of this section is to show that if  $G$  is nilpotent and
$\text{Hom}(G,X)=0$, then $\text{Hom}(H,X)=0$ for any ''big'' subgroup $H\subset G$. Dually,
if $\text{Hom}(X,G)=0$, then $\text{Hom}(X,H)=0$ for any ''big'' quotient $G\onto H$.
The adjective big is clarified by the following proposition:

%%%%%%%%%%%%%%%%%%%%%%%%%%%%%%%%%%%%%%%%%%%%%%%%%%%%%%%%%%%%%%%%
\begin{prop}\label{prop bigsubgroupsandquotients}
%%%%%%%%%%%%%%%%%%%%%%%%%%%%%%%%%%%%%%%%%%%%%%%%%%%%%%%%%%%%%%%%%%
Let $G$ be a nilpotent group and $X$ be a group.
\begin{enumerate}
\item If $\text{\rm Hom}(G,X)=0$, then, for any $i$, $\text{\rm Hom}(\Gamma_i(G),X)=0$.
\item If $\text{\rm Hom}(G,X)=0$, then $\text{\rm Hom}(H,X)=0$  for any  normal  subgroup  $H\trianglelefteq G$ for which $G/H$ is finitely generated.
\item If $\text{\rm Hom}(X,G)=0$, then $\text{\rm Hom}(X,G/H)=0$ for any finite normal subgroup $H\trianglelefteq G$.
\end{enumerate}
\end{prop}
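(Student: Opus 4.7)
The plan is to prove the three parts separately via inductions that reduce each assertion to a central-extension problem, which is then handled using the hypothesis on $G$. The common mechanism is the fact that in a nilpotent group every non-trivial normal subgroup meets the center, so central subgroups can be peeled off and the induction carried through.

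I would start with part (3), which seems most tractable. Induct on $|H|$, with the base $H=1$ trivial. For the step, the intersection $K:=H\cap Z(G)$ is non-trivial by the nilpotence of $G$, so passing to the pair $(G/K,H/K)$ (with $|H/K|<|H|$) and applying the inductive hypothesis reduces the problem to the case where $H$ is finite central in $G$. In that case, for a homomorphism $f:X\to G/H$, form the pullback central extension $1\to H\to P\to X\to 1$ equipped with its natural second-projection $P\to G$. A section $s:X\to P$ would produce a homomorphism $X\to P\to G$, which the hypothesis $\text{Hom}(X,G)=0$ forces to vanish; tracing back then forces $f=0$. The essential step is thus the existence of such a section (equivalently, the vanishing of the associated class in $H^{2}(X;H)$), and here the finiteness of $H$ is used decisively.

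For part (1) I would use downward induction on $i$ starting from $i$ above the nilpotency class of $G$, where $\Gamma_i(G)=1$. In the step, a homomorphism $f:\Gamma_i(G)\to X$ vanishes on $\Gamma_{i+1}(G)$ by the inductive hypothesis, hence factors through the abelian quotient $A:=\Gamma_i(G)/\Gamma_{i+1}(G)$, which is central in $G':=G/\Gamma_{i+1}(G)$. After replacing $X$ by the centralizer of the image of $f$ (a subgroup of $X$ that still has trivial homomorphism set from $G$) we may assume $f(A)\leq Z(X)$. Then the pushout $E_f:=(G'\times X)/\{(a,f(a)^{-1}):a\in A\}$ is a central extension of $G/\Gamma_i(G)$ by $X$ carrying a natural homomorphism $G\to E_f$ whose restriction to $\Gamma_i(G)$ is $f$. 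If this extension splits, a retraction $E_f\to X$ composed with $G\to E_f$ produces a homomorphism $G\to X$ extending $f$, which the hypothesis forces to be trivial, so $f=0$. For part (2), use that $G/H$ is finitely generated nilpotent and so admits a central series with cyclic quotients; lift this to a chain $H=H_0\leq\ldots\leq H_k=G$ and induct downward, treating finite-cyclic steps via (a relative version of) part (3) and infinite-cyclic steps via a wreath-product/coinduction construction extending homomorphisms from $H_{j-1}$ to $H_j$ into a sufficiently controlled target. Part (1) controls the tail of this chain.

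The main obstacle throughout is the control of the $H^{2}$-obstruction deciding whether the central extensions in the base cases split. For part (3) this is handled by the finiteness of the central $H$; for part (2) by the finite generation of $G/H$; and for part (1) by the special role of the lower central series in arranging pushouts for which splitting can be forced. In each case the concluding step is to convert a non-trivial homomorphism between a sub- or quotient-object into a non-trivial homomorphism involving $G$ itself, contradicting the stated hypothesis.
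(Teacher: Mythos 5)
The architecture you propose --- reduce everything to a central-extension problem and derive a contradiction from the hypothesis --- is in the right spirit, and your reduction of (3) to the case $H\le Z(G)$ via $H\cap Z(G)\ne 1$ is fine. But in each part the entire difficulty is deferred to a splitting claim that is neither proved nor true for the reasons you give. In (3) the pullback $1\to H\to P\to X\to 1$ has class $f^*\xi\in H^2(X;H)$, where $\xi$ is the class of $G\to G/H$, and finiteness of $H$ does not make this vanish: $H^2(X;H)$ contains $\Hom(H_2(X),H)$ and $\text{Ext}^1(H_1(X),H)$, which are generally non-zero even when $\Hom(X,H)=0$. The paper never splits this extension. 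It proves the much weaker statement (Lemma~\ref{lemma keynilpotent}(4)) that if $H$ is a finite \emph{proper} normal subgroup of the nilpotent group $P$, then there is a non-trivial homomorphism $(P/H)_{\text{ab}}\ra P$; composing with $P\hookrightarrow G$ and precomposing with $X\onto B=P/H$ already contradicts $\Hom(X,G)=0$, with no section needed. Proving that lemma is the real work: it uses the divisibility statement that $\Hom(G/H,H)=0$ forces $G/H$ to be $|H|$-divisible (Lemma~\ref{lemma keynilpotent}(3)), the map $Hg\mapsto [G,G]g^{|H|}$, and an induction on $|H|$. None of this is visible in your sketch, and ``the finiteness of $H$ is used decisively'' is not a substitute for it.

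The same gap recurs in (1): the existence of a retraction $E_f\to X$ is exactly what needs to be established, and ``splitting can be forced'' by ``the special role of the lower central series'' is not an argument. The paper's actual engine for (1) and (2) is elementary and entirely different: the iterated commutator maps $G_{\text{ab}}\ni x[G,G]\mapsto [x,x_2,\dots,x_i]\Gamma_{i+1}(G)$ are group homomorphisms $G_{\text{ab}}\ra\Gamma_i(G)/\Gamma_{i+1}(G)$, and a downward induction with them produces a non-trivial homomorphism from $G_{\text{ab}}$ into \emph{any} non-trivial quotient of $\Gamma_i(G)$, hence into $\text{im}(f)\subset X$, contradicting $\Hom(G,X)=0$ directly. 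For (2) your infinite-cyclic steps are also problematic: a coinduction or wreath-product construction converts a homomorphism out of $H_{j-1}$ into one out of $H_j$ landing in $X^{\mathbb Z}\rtimes{\mathbb Z}$ or similar, not in $X$, so it does not contradict $\Hom(H_j,X)=0$. The paper instead observes that if $G/H$ is infinite then some $\Gamma_i(G/H)/\Gamma_{i+1}(G/H)$ surjects onto ${\mathbf Z}$, extracts a surjection $G_{\text{ab}}\onto{\mathbf Z}$ from part (1), and handles the finite case by a separate induction on $|G/H|$. As written, the proposal names the obstructions but supplies no mechanism for overcoming them, and the mechanism is the whole proof.
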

This proposition can be used to show that certain groups are finite:

%%%%%%%%%%%%%%%%%%%%%%%%%%%%%%%%%%%%%%%%%%%%%%%%%%%%%%%%%%%%%%%%%%%%%%%
\begin{cor}\label{cor finitetest}
%%%%%%%%%%%%%%%%%%%%%%%%%%%%%%%%%%%%%%%%%%%%%%%%%%%%%%%%%%%%%%%%%%%%%%
Let $K\trianglelefteq H$ be a normal  subgroup  such that:
\begin{itemize}
\item[(a)]
$K$ is nilpotent,
\item [(b)]
$\text{\rm Hom}(H,K)=0$,
\item[(c)] $H/K$ is finitely generated,
\item[(d)] for some $j\geq 1$, $K\cap \Gamma_{j}(H)$ is finite.
\end{itemize}
Then $K$ is finite and  $K\subset \Gamma_{i}(H)$ for any $i\geq 1$.
\end{cor}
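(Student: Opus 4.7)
The plan is to show first that $K$ is finite and contained in $\Gamma_j(H)$, and then to bootstrap this into the full statement $K\subseteq \Gamma_i(H)$ for every $i\geq 1$. The engine is a two-step application of Proposition~\ref{prop bigsubgroupsandquotients}: part (3) will kill the finite ``obstruction'' $K\cap \Gamma_j(H)$, and then part (2) will force the resulting quotient of $K$ to be trivial.

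Set $F:=K\cap \Gamma_j(H)$. Since $K$ and $\Gamma_j(H)$ are each normal in $H$, so is $F$; in particular $F\trianglelefteq K$, and $F$ is finite by (d). Applying Proposition~\ref{prop bigsubgroupsandquotients}(3) to the nilpotent group $K$ (hypothesis (a)) with finite normal subgroup $F$ and with $X=H$, hypothesis (b) $\Hom(H,K)=0$ yields $\Hom(H,K/F)=0$.

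The kernel of the composition $K\hookrightarrow H\twoheadrightarrow H/\Gamma_j(H)$ is exactly $F$, so $K/F$ embeds as a normal subgroup of the nilpotent group $H/\Gamma_j(H)$, with quotient $H/(K\Gamma_j(H))$. This quotient is a quotient of $H/K$, hence finitely generated by (c). Because $H\twoheadrightarrow H/\Gamma_j(H)$ is surjective, precomposition with it embeds $\Hom(H/\Gamma_j(H),K/F)$ into $\Hom(H,K/F)=0$, so $\Hom(H/\Gamma_j(H),K/F)=0$. Now Proposition~\ref{prop bigsubgroupsandquotients}(2), applied with $G=H/\Gamma_j(H)$, with normal subgroup $K/F$ of finitely generated quotient, and with $X=K/F$, gives $\Hom(K/F,K/F)=0$. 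But the identity of $K/F$ lies in this Hom-set, so $K/F$ must be trivial. Therefore $K=F=K\cap \Gamma_j(H)$ is finite and contained in $\Gamma_j(H)$.

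For the full statement $K\subseteq \Gamma_i(H)$ for every $i\geq 1$, once $K$ is known to be finite the intersection $K\cap \Gamma_i(H)$ is automatically finite for any $i$, and the argument above with $j$ replaced by $i$ goes through verbatim and delivers $K\subseteq \Gamma_i(H)$. The step I expect to be the main obstacle is the initial setup: neither Proposition~\ref{prop bigsubgroupsandquotients}(2) nor \ref{prop bigsubgroupsandquotients}(3) alone suffices, but combining them---first passing to $K/F$ to remove the finite part, then exploiting the nilpotent ambient $H/\Gamma_j(H)$ with its finitely generated quotient---traps the identity of $K/F$ as the zero map.
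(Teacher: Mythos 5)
Your proof is correct and follows essentially the same route as the paper's: both first use Proposition~\ref{prop bigsubgroupsandquotients}(3) to pass from $\Hom(H,K)=0$ to $\Hom(H,K/F)=0$, and then exploit the nilpotence of $H/\Gamma_j(H)$ together with the finitely generated quotient $H/(K\Gamma_j(H))$ to force $K/F$ to be trivial. The only cosmetic difference is the final step: the paper invokes Lemma~\ref{lemma keynilpotent}(2) directly to produce a nontrivial homomorphism $H\to K/F$ and derive a contradiction, whereas you invoke its packaged form, Proposition~\ref{prop bigsubgroupsandquotients}(2), to conclude $\Hom(K/F,K/F)=0$ and trap the identity map --- logically the same argument in contrapositive form.
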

\begin{proof}
Note that to prove the corollary it is enough to show
$K=K\cap\Gamma_{i}(H)$ for any $i\geq j$. Let $G:=H/\Gamma_{i}(H)$
and consider $Q:=K/\big(K\cap\Gamma_{i}(H)\big)$.
Then $Q$ is (isomorphic to) a subgroup of $G$ and $G/Q$ is finitely generated.
Assume that $Q$ is non-trivial.  Then, by Lemma \ref{lemma keynilpotent}(2)
below, $\Hom(G,Q)\ne 0$, and then also $\Hom(H,Q)\ne 0$.

On the other hand, by hypothesis (b) $\text{\rm Hom}(H,K)=0$.
As $K\cap \Gamma_{i}(H)$ is finite and $K$ is nilpotent Proposition ~\ref{prop bigsubgroupsandquotients}(3),
implies that $\text{\rm Hom}(H,Q)=0$.  This contradiction completes the proof.
\end{proof}

Our key tool to prove~\ref{prop bigsubgroupsandquotients} is the following basic lemma. It provides existence of certain
non-trivial homomorphisms. These are know facts whose proofs are included for self containment.

%%%%%%%%%%%%%%%%%%%%%%%%%%%%%%%%%%%%%%%%%%%%%%%%%%%%%%%%%%%%%%%
%
\begin{lemma}\label{lemma keynilpotent}
%%%%%%%%%%%%%%%%%%%%%%%%%%%%%%%%%%%%%%%%%%%%%%%%%%%%%%%%%%%%%%%%
Let $G$ be  a nilpotent group.
\begin{enumerate}
\item For any $i$, there is a  non-trivial  homomorphism from $G_{\text{\rm ab}}$ to any non-trivial quotient of $\Gamma_i(G)$.
\item  If $H\trianglelefteq G$ is a   normal  subgroup for which $G/H$ is finitely generated, then
there is a non-trivial homomorphism from $G_{\text{\rm ab}}$ to any non-trivial quotient  of $H$.
\item Let  $H$ be a  finite group. If  $\text{\rm Hom}(G,H)=0$, then the function $G\ni g\mapsto g^{|H|}\in G$
is a surjection.
\item If $H$ is a finite proper normal subgroup of $G$, then there is a non-trivial homomorphism $(G/H)_{\text{\rm ab}}\ra G$.
\end{enumerate}
\end{lemma}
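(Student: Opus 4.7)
The four parts all rely on the fact that the successive quotients $\Gamma_k(G)/\Gamma_{k+1}(G)$ of the lower central series are abelian and arise as quotients of $(G_{\text{ab}})^{\otimes k}$ via iterated commutators. For (1), I would take the largest $k \geq i$ with $\Gamma_k(G) \not\subset N$; it exists because $G$ is nilpotent. Since $\Gamma_{k+1}(G) \subset N$, the image of $\Gamma_k(G)$ in $\Gamma_i(G)/N$ is a non-trivial quotient of $\Gamma_k(G)/\Gamma_{k+1}(G)$, and hence of $(G_{\text{ab}})^{\otimes k}$. Picking a pure tensor $a_1 \otimes \cdots \otimes a_k$ with non-zero image and fixing $a_2, \ldots, a_k$ produces the desired non-trivial homomorphism $G_{\text{ab}} \to \Gamma_i(G)/N$.

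For (3), Cauchy's theorem embeds $\mathbb{Z}/p$ in $H$ for every prime $p \mid |H|$, so $\Hom(G,H) = 0$ forces $G_{\text{ab}}$ to be $p$-divisible for each such $p$, hence $|H|$-divisible. I would then show by induction on the nilpotency class $c$ that $n$-divisibility of $G_{\text{ab}}$ implies every element of $G$ is an $n$-th power: given $g$, use the inductive hypothesis in $G/\Gamma_c(G)$ to find $y$ with $y^{n} g^{-1} \in \Gamma_c(G)$, and then correct by a central element $w \in \Gamma_c(G)$ whose $n$-th power cancels the error, which exists since $\Gamma_c(G)$ is a quotient of $(G_{\text{ab}})^{\otimes c}$ and hence also $n$-divisible. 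For (4), $(G/H)_{\text{ab}}$ is a non-trivial abelian group. If it admits a $\mathbb{Z}$-quotient, I would compose this surjection with $\mathbb{Z} \to G$ sending $1$ to any non-identity element of $G$. Otherwise fix a $\mathbb{Z}/p$-quotient and lift a generator to $g \in G$; then $g^p \in H$ has finite order since $H$ is finite, so $g$ itself has finite order divisible by $p$, and an appropriate power of $g$ yields an element of order exactly $p$ in $G$, providing the embedding $\mathbb{Z}/p \hookrightarrow G$ needed to finish.

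Part (2) is the main obstacle. I would use the filtration $H_0 = H$, $H_{j+1} = [G, H_j]$, which is a descending chain of $G$-normal subgroups of $H$ terminating at $H_c = 1$ (since $H_j \subset \Gamma_{j+1}(G)$). Each quotient $H_j/H_{j+1}$ is abelian and is a quotient of $(G_{\text{ab}})^{\otimes j} \otimes (H/[G,H])$ via iterated commutators. Taking $j$ maximal with $H_j \not\subset N$, the case $j \geq 1$ is handled by the same pure-tensor construction as in (1). The delicate case is $j = 0$, where $[G, H] \subset N$: a short commutator calculation then shows that $N$ is automatically normal in $G$ and $H/N$ is an abelian group with trivial $G$-action. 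Here the hypothesis that $G/H$ is finitely generated enters crucially; writing $(G/H)_{\text{ab}} = \mathbb{Z}^r \oplus T$ with $T$ finite of order $m$, one either pulls back a non-trivial hom $(G/H)_{\text{ab}} \to H/N$ along $G_{\text{ab}} \twoheadrightarrow (G/H)_{\text{ab}}$ (when $r \geq 1$, or when $H/N$ has torsion compatible with $T$), or uses the multiplication-by-$m$ endomorphism of $G_{\text{ab}}$, whose image lies in the image of $H$ since $m$ annihilates $(G/H)_{\text{ab}}$, composed with a suitable projection to $H/N$; a final $p$-primary analysis of $H/N$ for a prime $p \mid m$ handles the remaining degenerate sub-case.
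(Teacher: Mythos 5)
Your treatments of parts (1) and (3) are fine and close in spirit to the paper's: your non-inductive choice of the largest $k\geq i$ with $\Gamma_k(G)\not\subset N$ repackages the paper's reverse induction, and your correction-by-a-central-element argument for (3) is the paper's inductive step, with the $n$-divisibility of the top term $\Gamma_c(G)$ obtained from the multilinear commutator surjection rather than from part (1). Parts (2) and (4), however, contain genuine gaps.

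Part (4) is broken. Your case split assumes that a non-trivial abelian group $(G/H)_{\text{ab}}$ admits either a $\mathbf{Z}$-quotient or a $\mathbf{Z}/p$-quotient; divisible groups admit neither. Take $G={\mathbf Z}/p^{\infty}$ and $H={\mathbf Z}/p$: then $(G/H)_{\text{ab}}\cong{\mathbf Z}/p^{\infty}$, your method produces nothing, yet the conclusion holds (a non-trivial endomorphism of ${\mathbf Z}/p^{\infty}$ exists). Even within your $\mathbf{Z}/p$-quotient case the argument fails twice over: a lift $g$ of a generator satisfies $g^p\in\ker\bigl(G\to{\mathbf Z}/p\bigr)$, which is in general much larger than $H$, so the claim ``$g^p\in H$ has finite order'' is unjustified; and the needed element of order exactly $p$ in $G$ need not exist (e.g.\ $G={\mathbf Z}[1/p]$, $H=1$, which surjects onto ${\mathbf Z}/q$ for $q\neq p$ but is torsion-free). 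The paper's proof of (4) is of a different nature: it first reduces to the case $[G,G]\subset H$ using $C_G(H)$ and Schur's theorem, then inducts on $|H|$, invoking part (3) to get $|H|$-divisibility of $G/H$ and running the induction through a pull-back along the map $Hg\mapsto [G,G]g^{|H|}$. None of that machinery is replaceable by a cyclic-quotient argument.

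Part (2) has a gap in exactly the sub-case that carries all the difficulty. Your filtration $H_{j+1}=[G,H_j]$ and the pure-tensor construction dispose of $j\geq 1$ correctly, and the reduction to $H/N$ central in $G/N$ is sound. But in the residual case $[G,H]\subset N$ with $(G/H)_{\text{ab}}$ finite of order $m$, the multiplication-by-$m$ map on $G_{\text{ab}}$ lands in $H[G,G]/[G,G]\cong H/(H\cap[G,G])$, and there is no ``suitable projection'' from this to $H/N$ unless $H\cap[G,G]\subset N$, which is not implied by $[G,H]\subset N$. The closing sentence about ``a final $p$-primary analysis'' is not an argument; this is precisely where the paper runs a double induction (on $|G/H|$, reduced to the prime-order case, and within that on the nilpotency class of $G$), with the abelian base case itself splitting according to whether multiplication by $p$ on $G/K$ is trivial. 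As written, your proof of (2) establishes the statement only when $H\cap[G,G]\subset N$ or when $(G/H)_{\text{ab}}$ is infinite.
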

\begin{proof}
Recall  that, for any sequence
of elements $x_{2},\ldots,x_{i}$ in $G$,
the following map of sets is a group homomorphism:
\[G_{\text{\rm ab}}=G/[G,G]\ni x[G,G] \mapsto [x,x_{2},\ldots,x_{i}]\Gamma_{i+1}(G)\in\Gamma_{i}(G)/\Gamma_{i+1}(G). \]
We will refer to it as the homomorphism given by the sequence  $x_{2},\ldots,x_{i}$ and denote it by
$[-,x_2,\ldots,x_i]$.
\smallskip

\noindent (1):\quad
The proof is by reverse induction on $i$.
Let  $j$ be maximal such that $\Gamma_j(G)$ is non-trivial. For any proper subgroup $K\subset \Gamma_j(G)$,
there is an element  $[x_1,\ldots, x_j]$ of $\Gamma_j(G)$ which does not belong to $K$.
The desired non-trivial homomorphism is given by the composition:
\[\xymatrix{G_{\text{ab}}\rrto^(.45){[-,x_2,\ldots,x_j]} & & \Gamma_j(G)\ar@{->>}[rr]^(.43){\text{quotient}} & & \Gamma_j(G)/K.
}\]

Assume $i<j$. Let $K$ be a proper normal subgroup of $\Gamma_i(G)$. There are two possibilities:

\begin{list}{\labelitemi}{\leftmargin=1.5em \setlength\labelwidth{1in}}
\item
$\Gamma_{i+1}(G)\subset K$:\quad  Let $[x_1,\ldots,x_i]$ be an element of $\Gamma_i(G)$ which does not belong to $K$. The desired non-trivial homomorphism is given by the composition:
\[\xymatrix{G_{\text{ab}}\rrto^(.35){[-,x_2,\ldots,x_i]} & & \Gamma_i(G)/\Gamma_{i+1}(G)\ar@{->>}[rr]^(.53){\text{quotient}} & & \Gamma_i(G)/K.
}\]

\item $\Gamma_{i+1}(G)\not\subset K$:\quad  Let $f:G_{\text{\rm ab}}\ra \Gamma_{i+1}(G)/\big(\Gamma_{i+1}(G)\cap K\big)$ be a non-trivial homomorphism which exists by the inductive assumption.
The desired non-trivial homomorphism is given by the composition:
\[\xymatrix{G_{\text{\rm ab}}\rto^(.25){f} & \Gamma_{i+1}(G)/\big(\Gamma_{i+1}(G)\cap K\big)\ar@{^(->}[r] &
 \Gamma_{i}(G)/K.
}\]
\end{list}
%The composition of the homomorphism $[-,x_2,\ldots,x_j]:G_{\text{\rm ab}}\ra \Gamma_j(G)$ with the quotient
%$\Gamma_j(G)\ra \Gamma_j(G)/K$ is the desired non-trivial homomorphism $G_{\text{\rm ab}}\ra  \Gamma_j(G)/K$.
%We proceed by reverse induction on $i$. Assume $i<j$. Let $K$ be a proper normal subgroup of $\Gamma_i(G)$.
%There are two possibilities: either $\Gamma_{i-1}(G)$ is a subgroup of $K$ or not.
%In the first case let $[x_1,\ldots,x_i]$ be an element of $\Gamma_i(G)$ which does not belong to $K$.
%The composition of $[-,x_2,\ldots,x_i]:G_{\text{\rm ab}}\ra \Gamma_i(G)/\Gamma_{i-1}(G)$ and the quotient
%$\Gamma_i(G)/\Gamma_{i-1}(G)\ra \Gamma_i(G)/K$ is the desired non-trivial homomorphism
%$G_{\text{\rm ab}}\ra  \Gamma_j(G)/K$.
%In the case $\Gamma_{i-1}(G)$ is not a subgroup of $K$, the composition of any non-trivial homomorphism
%$G_{\text{\rm ab}}\ra \Gamma_{i-1}(G)/\big(\Gamma_{i-1}(G)\cap K\big)$ (which exists by the inductive assumption)
%with the inclusion $\Gamma_{i-1}(G)/\big(\Gamma_{i-1}(G)\cap K\big)\subset \Gamma_{i}(G)/K$
%is the desired non-trivial homomorphism $G_{\text{\rm ab}}\ra  \Gamma_j(G)/K$.
\smallskip

\noindent (2):\quad
Let $K$ be a proper normal subgroup of $H$. Our aim is to construct a non-trivial homomorphism $G_{\text{ab}}\ra H/K$.
We  consider two cases:
\medskip

\noindent
       {\bf $G/H$ is infinite:}\quad  In this case, for some $i$,
       $\Gamma_{i}(G/H)/\Gamma_{i+1}(G/H)$ is a finitely generated infinite abelian group. For this $i$,
	the group of integers ${\mathbf Z}$ is a quotient of $\Gamma_{i}(G/H)/\Gamma_{i+1}(G/H)$ and so $	
	{\mathbf Z}$ is a quotient  $\Gamma_{i}(G)$.  By (1),  there
	is a non-trivial homomorphism $G_{ab}\ra {\mathbf Z}$. Its image must be  isomorphic to ${\mathbf Z}$ and
	consequently there is a surjection $G_{\text{ab}}\onto {\mathbf Z}$. The composition of this surjection  with 	 any non-trivial homomorphism  $\mathbf Z\ra H/K$ is the desired non-trivial homomorphism $G_{\text{ab}}\ra 	H/K$.
\medskip

\noindent
{\bf $G/H$ is finite:}\quad  The desired non-trivial homomorphism $G_{\text{ab}}\ra H/K$  will be
	 constructed by induction on the 		order of $G/H$.
	\begin{list}{\labelitemi}{\leftmargin=1.5em \setlength\labelwidth{1in}}
	\item $|G/H|=1$:\quad  Since $H=\Gamma_0(G)=G$, the existence of the non-trivial homomorphism $G_			 {\text{ab}}\ra \Gamma_0(G)/K$ is given by statement (1).
	\item $|G/H|$ is a prime number $p$:\quad In this case  $G/H$ is isomorphic to
		$ \mathbf{Z}/p$.
		We proceed by  induction on the nilpotency class of $G$.

		 \begin{list}{\labelitemi}{\leftmargin=1.5em}
		 \item[--] $G$ is abelian:\quad  The multiplication by $p$ homomorphism
			$p:G/K\ra G/K$ factors as:
			\[\xymatrix{
			G/K\rto^{f}\ar@/ _ 15pt/[rr]|{p} & H/K\rmono & G/K.
			}\]
			There are two possibilities:
			\begin{list}{\labelitemi}{\leftmargin=1.5em}
			 \item[$\ast$]   $p:G/K\ra G/K$ is non-trivial:\quad  In this case $f$ can not be trivial either and
				the desired homomorphism can be taken to be the composition:
				 \[\xymatrix{
				G\ar@{->>}[rr]^(.4){\text{quotient}}& & G/K\rto^f & H/K.
				}\]
			 \item[$\ast$]   $p:G/K\ra G/K$ is trivial:\quad
				The abelian  groups  $G/K$ and $H/K$ are then  $ \mathbf{Z}/p$-vector spaces.
				As   $H/K$ is not trivial, it contains $ \mathbf{Z}/p$ as a subgroup and
				the desired homomorphism can be taken to be the composition:
				\[\xymatrix{
				G\ar@{->>}[rr]^(.35){\text{quotient}}& & G/H=\mathbf{Z}/p\ar@{^(->}[r]  & H/K.
				}\]
			\end{list}
		 \item[--]
			Let $i=\text{max}\{j\ |\ \Gamma_{j}(G)\not=1\}>0$:\quad   Since $G/H$ is abelian,
			$\Gamma_{i}(G)	$ is a subgroup of $H$. There are two possibilities:
			 \begin{list}{\labelitemi}{\leftmargin=1.5em}
			 \item[$\ast$]
				 $\Gamma_{i}(G)\subset K$:\quad
				Consider the following sequence of groups:
				\[K/\Gamma_{i}(G)\ \trianglelefteq\  H/\Gamma_{i}(G)\ \trianglelefteq\  G/\Gamma_{i}(G).\]
				As the nilpotence class of $G/\Gamma_{i}(G)$ is smaller than that of $G$, by the inductive 			
				assumption there is a non-trivial homomorphism:
				\[f:(G/\Gamma_{i}(G))_{\text{ab}}\ra
				\big(H/\Gamma_{i}(G)\big)/\big(K/\Gamma_{i}(G)\big)=H/K\]
				The following composition gives the desired non-trivial  homomorphism:
				\[\xymatrix{G_{\text{ab}}\ar@{->>}[rr]^(.36){\text{quotient}} &&
				(G/\Gamma_{i}(G))_{\text{ab}}\rto^(.6){f}& H/K.}\]
			 \item[$\ast$]
				 $\Gamma_{i}(G)\not\subset K$:\quad
				Let $[x_1,\ldots,x_i]$ be element of $\Gamma_i(G)$
				which does not belong to $K$. The following
				composition gives the desired non-trivial  homomorphism:
				\[\xymatrix{
				G_{\text{ab}}\rrto^(0.45){[-,x_2,\ldots, x_i]} & &
				\Gamma_{i}(G)\ar@{^(->}[r] & H\ar@{->>}[rr]^(.39){\text{quotient}} && H/K.
				}\]
			\end{list}
		\end{list}
	\item $|G/H|>1$ and $|G/H|$ is not a prime:\quad  Since $G/H$ is finite and nilpotent and its
		 order is not a prime number, there is a sequence of
		proper normal subgroups  $H\triangleleft L \triangleleft G$.
		By the inductive 	
		assumption there is a non-trivial homomorphism $f:L_{\text{ab}}\ra H/K$.
		Let $K_f$ be the kernel of $f$. By the inductive assumption applied to $L \trianglelefteq G$
		there is also a non-trivial
		homomorphism $g:G_{\text{ab}}\ra L_{\text{ab}}/K_f$. The following
		composition gives the desired non-trivial  homomorphism:
		\[\xymatrix{
		G_{\text{ab}}\rto^(.45){g} &  L_{\text{ab}}/K_f\ar@{^(->}[r] &H/K.
		}\]
	\end{list}
\smallskip

\noindent (3):\quad
We prove the statement by induction on the nilpotence class of $G$.
\begin{list}{\labelitemi}{\leftmargin=1.5em \setlength\labelwidth{1in}}
\item $G$ is abelian:\quad
If $|H|=1$, the statement is clear. Assume $|H|>1$ and let $p$ be a prime dividing $|H|$.
Since  the group ${\mathbf Z}/p$ is a subgroup of $H$, we also have
$\text{\rm Hom}(G,{\mathbf Z}/p)=0$. This means that $G\otimes {\mathbf Z}/p=0$ and hence the multiplication by $p$ homomorphism
$p:G\ra G$ is a surjection. As this happens for all the primes dividing $|H|$, same is true
for the homomorphism $G\ni g\mapsto |H|g\in G$.
\item  Let $i=\text{max}\{j\ |\ \Gamma_{j}(G)\not=1\}>0$:\quad
We claim that:
\[\text{Hom}(G/\Gamma_{i}(G),H)=0,\ \ \ \ \ \ \text{Hom}(\Gamma_{i}(G),H)=0.\]
The first equality is clear as $G/\Gamma_{i}(G)$ is a quotient of $G$ and $G$ has no non-trivial homomorphisms into $H$.
Let $f:\Gamma_{i}(G)\ra H$ be a homomorphism  and $L\subset H$ be its image.
If $L$ were non-trivial, then by statement (1), there would be a non-trivial
homomorphism $g:G_{\text{ab}}\ra L$.
The following composition would be then a non-trivial homomorphism from $G$ to $H$ which contradicts our assumption:
	   \[\xymatrix{
	G\ar@{->>}[rr]^(0.44){\text{quotient}} && G_{\text{ab}}\rto^{g} & L\ar@{^(->}[r] & H.
	}\]

Let $g\in G$. We need to show that there is an element whose $|H|$-th power is $g$. Since the nilpotence class of $G/\Gamma_{i}(G)$
is smaller than that of $G$, by the inductive assumption, there is $h\in G$ such that, for some $a\in \Gamma_{i}(G)$, $h^{|H|}a=g$. As  $\Gamma_{i}(G)$ is abelian and $ \text{Hom}(\Gamma_{i}(G),H)=0$, there is also $b\in \Gamma_{i}(G)$ for which  $b^{|H|}=a$.
The triviality of $\Gamma_{i+1}(G)$ implies that $b$ is central in $G$. It follows that $g=(hb)^{|H|}$.

\end{list}
\smallskip

\noindent (4):\quad
We first claim that we may assume that $G/H$ is abelian.  Indeed suppose
part (4) holds in this case.  Consider $C_G(H)$.  If $C_G(H)$ is
not central in $G$, then pick $x\in Z_2(G)\cap C_G(H)$.
The map $g\mapsto [g,x],$ for all $g\in G,$ is
a non-trivial homomorphism from $G$ to $Z(G),$ that contains $H$ in its kernel.
Hence it induces a non-trivial homomorphism $G/H\to G$ and we are done.

Hence $C_G(H)=Z(G)$  and since $G/C_G(H)$ is
isomorphic to a subgroup of $\Aut(H)$ and $H$ is finite,
we conclude that $G/Z(G)$ is finite and hence, by a theorem of Schur, $[G,G]$
is finite.  But now, by our hypothesis there exists
a non-trivial homomorphism $G/H[G,G]\ra G$ and hence
also a non-trivial homomorphism $G/H\to G$.

It remains to prove part (4) under the hypothesis that $G/H$ is abelian, i.e.,
$[G,G]$ is a subgroup of $H$.  Under this assumption we proceed by induction on the order $|H|$
to  show the existence of a non-trivial homomorphism $G/H\ra G$.

If  $|H|=1$,  then $G$ and $G/H$ are isomorphic and the statement is  clear.

Assume $|H|>1$.
If there is a non-trivial homomorphism $G/H\ra H$, then its composition
with the inclusion $H\subset G$ gives a  non-trivial homomorphism
$G/H\ra G$ and the statement is proven. Thus we can assume $\text{Hom}(G/H,H)=0$ and consequently, according to (3), $G/H$ is $|H|$-divisible.

Consider the map
\[
h: G/H\to G/[G,G]\text{ defined by }Hg\mapsto [G,G]g^{|H|}.
\]
It is easy to check that this is a group homomorphism, its kernel
$K_h$ is annihilated by $|H|$ and since $G/H$ is $|H|$-divisible,
$\text{Hom}(G/H,K_h)=0$.

We can use  $h$ to form the following pull-back square:
\[\xymatrix{
& K_h\ar @{=}[rr]\dmono & &K_h\dmono\\
[G,G]\ar @{=}[d] \rmono & P\dto^{h'}\ar@{->>}[rr] && G/H\dto^{h}\\
[G,G]\rmono & G\ar@{->>}[rr]^(.4){\text{quotient}} && G/[G,G]
}\]
There are two possibilities:
\begin{list}{\labelitemi}{\leftmargin=1.5em \setlength\labelwidth{1in}}
\item $[G,G]=H$:\quad
In this case the non-trivial
homomorphism $G/H=G_{\text{ab}}\ra G$ is given by statement (1).
\item $[G,G]\subsetneq H$:\quad   Notice that
the image $P \ra G/H$ is abelian,
so we can apply  the inductive assumption (with $P$ in place of $G$ and $[G,G]$ in place of $H$)
to deduce that  there is
a non-trivial homomorphism $\alpha:G/H\ra P$.  Consider the composition
of $\alpha$ with the vertical homomorphism $h':P\ra G$ in the above
diagram.   If this composition were trivial then $\alpha:G/H\ra P$ would factor through
$K_h\subset P$. This however is impossible since there are no non-trivial homomorphisms from $G/H$ to $K_h$.
\qedhere
\end{list}
\end{proof}

We are now ready to prove:
\begin{proof}[Proof of~\ref{prop bigsubgroupsandquotients}]
\noindent (1)\&(2):\quad Let $H$ be either $\Gamma_i(G)$ or a normal
subgroup of $G$ for which $G/H$ is finitely generated. Assume $\text{Hom}(G,X)=0$.
Let $f:H\ra X$ be a homomorphism. If $f$ is non-trivial,
then according by Lemma~\ref{lemma keynilpotent}(1\&2), there is a non-trivial
homomorphism $G_{\text{ab}}\ra \text{im}(f)$. This implies
the existence of a non-trivial homomorphism $G\ra X$ contradicting
$\text{Hom}(G,X)=0$. Hence $\text{Hom}(H,X)=0$.
\smallskip

\noindent (3):\quad
Let
$f:X\ra G/H$ be an arbitrary homomorphism. Consider   its image $B\subset G/H$
and the following pull-back square:
\[\xymatrix{ H\rmono \ar @{=}[d] & P\dmono\ar@{->>}[rr] & & B\dmono\\
H\rmono & G\ar@{->>}[rr]^(.4){\text{quotient}} & &G/H
}\]
According to Lemma~\ref{lemma keynilpotent}(4), if $H\subset P$ were a proper subgroup, then there would be a non-trivial homomorphism
$B\ra P$. The composition of this homomorphism with the injection
$P\hookrightarrow G$ would give a non-trivial homomorphism $B\ra G$.
This however is impossible since the composition of this homomorphism with the $f:X\onto B$ would be also non-trivial
contradicting the assumption $\text{Hom}(X,G)=0$. We can conclude that $H=P$ and hence $B=0$. Consequently $f$ is the trivial homomorphism and $\text{Hom}(X,G/H)=0$.
\end{proof}

%%%%%%%%%%%%%%%%%%%%%%%%%%%%%%%%%%%%%%%%%%%%%%%%%%%%
%%%%%%%%%%%%%%%%%%%%%%%%%%%%%%%%%%%%%%%%%%%%%%%%%%
%%%%%%%%%%%%%%%%%%%%%%%%%%%%%%%%%%%%%%%%%%%%%%%%%%%
%section4
\section{Generalized subgroups}
%%%%%%%%%%%%%%%%%%%%%%%%%%%%%%%%%%%%%%%%%%%%%%%%%%%%
%%%%%%%%%%%%%%%%%%%%%%%%%%%%%%%%%%%%%%%%%%%%%%%%%%%%%%
%%%%%%%%%%%%%%%%%%%%%%%%%%%%%%%%%%%%%%%%%%%%%%%%%%%%%%
Ultimately we would like to classify elements of $\text{Idem}(G)$  for a finite group $G$ using some classical invariants.
According to~\ref{prop identifyingidemp} this is equivalent to the enumeration of
  $\cov{G}$.   Unfortunately we  are unable to enumerate $\cov{G}$.
It turns out however that it is easier to give a classification  for a bigger collection which is the subject of
Section~\ref{sec gensuboffinite}. In this section we define this bigger collection
which we call {\bf generalized subgroups,} and discuss
some properties of its elements.

%%%%%%%%%%%%%%%%%%%%%%%%%%%%%%%%%%%%%%%%%%%%%%%%%%%%%%%%%%%%%%%%%%%%%%%%%%%%
\begin{Def}\label{def gensubgroups}
%%%%%%%%%%%%%%%%%%%%%%%%%%%%%%%%%%%%%%%%%%%%%%%%%%%%%%%%%%%%%%%%%%%%%%%%%
Let $G$ be a group.
\begin{enumerate}
\item A homomorphism $a:X\ra G$ is called a {\bf generalized subgroup} of $G$ if
$\text{Hom}(X,a):\text{Hom}(X,X)\ra \text{Hom}(X,G)$ is an injection   of sets
(but not necessarily a bijection as it is in the case of a cellular  cover of $G$).
\item
Two generalized subgroups $a:X\ra G$ and $b:Y\ra G$ are defined  to be equivalent if there is an isomorphism $h:X\ra Y$ for which $bh=a$.
\item
The symbol $\gensub{G}$ denotes the collection of equivalence classes of generalized subgroups of $G$.
\end{enumerate}
\end{Def}

We start the study of   generalized subgroups of $G$ by giving their direct characterization:

%%%%%%%%%%%%%%%%%%%%%%%%%%%%%%%%%%%%%%%%%%%%%%%%%%%%%%%%%%%%%%%%%%%%%%%%%
\begin{prop}\label{prop charcCmono}
%%%%%%%%%%%%%%%%%%%%%%%%%%%%%%%%%%%%%%%%%%%%%%%%%%%%%%%%%%%%%%%%%%%%%
A homomorphism
$a:X\ra G$ is a generalized  subgroup of $G$ if and only if the following conditions are satisfied:
\begin{itemize}
\item[(a)]$\text{\rm Ker}(a)$ is a central subgroup of $X$,
\item[(b)] $\text{\rm Hom}(X,\text{\rm Ker}(a))=0$.
\end{itemize}
\end{prop}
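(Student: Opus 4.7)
The plan is to work directly from the definition: the map $\mathrm{Hom}(X,a)$ is injective precisely when, for any two homomorphisms $f,g:X\ra X$ with $af=ag$, we must have $f=g$. The strategy is to produce, out of such a pair $(f,g)$, a homomorphism $X\ra \mathrm{Ker}(a)$ that measures their difference; this will link injectivity of $\mathrm{Hom}(X,a)$ to both conditions (a) and (b). Throughout I write $K:=\mathrm{Ker}(a)$.

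For the direction ``$a$ is a generalized subgroup $\Rightarrow$ (a) and (b),'' I would first handle (a) via conjugation. Given $y\in K$, consider the inner automorphism $c_y:X\ra X$, $x\mapsto yxy^{-1}$. Since $y\in K$, $a\circ c_y=a=a\circ\mathrm{id}_X$, so injectivity of $\mathrm{Hom}(X,a)$ forces $c_y=\mathrm{id}_X$; hence $y\in Z(X)$ and $K\trianglelefteq Z(X)$. For (b), suppose $k:X\ra K$ is any homomorphism. Using that $K$ is now known to be central, define $g:X\ra X$ by $g(x)=x\cdot k(x)$: centrality makes $g$ a homomorphism because $g(xy)=xy\,k(x)k(y)=x\,k(x)\,y\,k(y)=g(x)g(y)$. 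Since $k(x)\in K$, $ag=a=a\circ\mathrm{id}_X$, whence $g=\mathrm{id}_X$ by injectivity, so $k$ is trivial.

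For the converse ``$(a)\ \&\ (b)\Rightarrow$ generalized subgroup,'' take $f,g:X\ra X$ with $af=ag$. For each $x\in X$, the element $k(x):=f(x)^{-1}g(x)$ lies in $K$. The central claim is that $k:X\ra K$ is a group homomorphism. Using that $K\subset Z(X)$ by (a), one checks
\[
k(xy)=f(y)^{-1}f(x)^{-1}g(x)g(y)=f(y)^{-1}k(x)g(y)=k(x)f(y)^{-1}g(y)=k(x)k(y),
\]
where the third equality uses that $k(x)\in K$ is central. By (b), $k$ is trivial, i.e.\ $f=g$, so $\mathrm{Hom}(X,a)$ is injective.

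The only subtle point is the middle commutation step in the verification that $k$ is a homomorphism, which is precisely what forces condition (a) to appear alongside (b); both appear symmetrically in the ``$\Rightarrow$'' direction, where (a) is extracted from conjugations and (b) from the ``translate-by-$k$'' automorphism trick. No substantial obstacle is expected beyond bookkeeping the centrality of $K$ at each stage.
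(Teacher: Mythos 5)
Your proof is correct and follows essentially the same route as the paper's: conjugation by kernel elements yields (a), and the converse is proved via the difference homomorphism $x\mapsto f(x)^{-1}g(x)$ into the central kernel, exactly as in the paper (which uses $f(x)g(x)^{-1}$). The only minor divergence is in deriving (b): you use the translation $g(x)=x\,k(x)$, which needs the centrality already established in (a), whereas the paper compares the composite $X\to\mathrm{Ker}(a)\hookrightarrow X$ with the trivial endomorphism and so obtains (b) independently of (a); both arguments are valid.
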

\begin{proof}
Assume first that $a:X\ra G$ is a generalized  subgroup of $G$.
Let $x\in \text{\rm Ker}(a)$.
Consider the identity $\text{id}:X\ra X$ and the conjugation $c_x:X\ra X$.
Then $a c_x=a \text{id}$.
It follows that $c_x=\text{id}$ and hence $x$ is  in the center of $X$.  This shows (a).

Consider now the trivial homomorphism
$X\ra X$ and the composition of some $f:X\ra \text{\rm Ker}(a)$ with the inclusion
$\text{\rm Ker}(a)\subset X$. The compositions of these homomorphisms with $a$
are equal to the trivial homomorphism. Thus  any such $f$ must be trivial and
consequently $\text{\rm Hom}(X,\text{\rm Ker}(a))=0$ which is requirement (b).

Assume that conditions (a) and  (b) are satisfied. We need to show  injectivity of
$\text{Hom}(X,a):\text{Hom}(X,X)\ra \text{Hom}(X,G)$.   Let  $f, g:X\ra X$ be homomorphisms. Assume
$af=ag$. This means that, for any $x\in X$, $f(x)g(x)^{-1}$ belongs to $\text{\rm Ker}(a)$.
We claim that the function $X\ni x\mapsto f(x)g(x)^{-1}\in \text{\rm Ker}(a)$ is a group
homomorphism.  This follows from
the fact that $\text{\rm Ker}(a)$ is central in $X$:
\[f(xy)g(xy)^{-1}=f(x)f(y)g(y)^{-1}g(x)^{-1}=f(x)g(x)^{-1}f(y)g(y)^{-1}.\]
Since  $\text{\rm Hom}(X,\text{\rm Ker}(a))=0$, we can conclude that  $f(x)g(x)^{-1}$ is the identity element for any $x\in X$.  Consequently $f=g$.
\end{proof}

We can use this direct characterization to show  that
generalized subgroups of $G$ inherit certain properties of $G$.

%%%%%%%%%%%%%%%%%%%%%%%%%%%%%%%%%%%%%%%%%%%%%%%%%%%%%%%%%%%%%%%%%%%%%%%%%%
\begin{prop}\label{prop presofsolnilfin}
%%%%%%%%%%%%%%%%%%%%%%%%%%%%%%%%%%%%%%%%%%%%%%%%%%%%%%%%%%%%%%%%%%%%%%%%
Let  $a:X\ra G$ be a generalized subgroup.
\begin{enumerate}
\item If $G$ is nilpotent, respectively solvable, then so is $X$.
\item If $G$ is finite, then so is $X$. Moreover
$\text{\rm Ker}(a)\subset \Gamma_i(X)$ for any $i$.
\item If $G$ is finitely generated and nilpotent, then  $a:X\ra G$
is an injection. In particular $X$ is also finitely generated.
\end{enumerate}
\end{prop}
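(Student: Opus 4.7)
Write $K:=\text{Ker}(a)$. By Proposition \ref{prop charcCmono}, $K$ is central in $X$ and $\text{Hom}(X,K)=0$. The image $a(X)$ is isomorphic to $X/K$, so we have a central extension
\[
1\lra K\lra X\lra a(X)\lra 1,
\]
with $a(X)$ embedded in $G$. This central extension is the only structural input used in all three parts.

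\textbf{Part (1).} If $G$ is nilpotent (resp.\ solvable), so is its subgroup $a(X)$. Since $K$ is central in $X$, it is abelian, and a central extension of a nilpotent (resp.\ solvable) group by an abelian group is again nilpotent (resp.\ solvable). This gives the conclusion for $X$ directly; there is nothing delicate here.

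\textbf{Part (2).} I would apply Corollary \ref{cor finitetest} with $H:=X$ and $K:=\text{Ker}(a)$. Three of the four hypotheses are immediate: (a) $K$ is abelian hence nilpotent; (b) $\text{Hom}(X,K)=0$ by Proposition \ref{prop charcCmono}; (c) $X/K\cong a(X)\subseteq G$ is finite, hence finitely generated. The hypothesis that will take an extra step is (d): I need $K\cap \Gamma_{j}(X)$ finite for some $j\geq 1$. Because $K\subseteq Z(X)$ and $X/K$ is finite, the quotient $X/Z(X)$ (being a further quotient of $X/K$) is finite, and Schur's theorem then gives that $[X,X]$ is finite. Consequently $K\cap [X,X]$ is finite, supplying hypothesis (d). Corollary \ref{cor finitetest} then yields that $K$ is finite and $K\subseteq\Gamma_i(X)$ for every $i\geq 1$; since $X/K$ is also finite, $X$ is finite. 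The main (modest) obstacle is precisely the appeal to Schur's theorem in verifying (d).

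\textbf{Part (3).} Suppose $G$ is finitely generated and nilpotent. By part (1), $X$ is nilpotent. Since $a(X)\cong X/K$ is a subgroup of the finitely generated nilpotent group $G$, it is itself finitely generated (subgroups of finitely generated nilpotent groups are finitely generated). Now apply Proposition \ref{prop bigsubgroupsandquotients}(2) to the nilpotent group $X$, with $K\trianglelefteq X$ and target $K$: the hypothesis $\text{Hom}(X,K)=0$ together with $X/K$ finitely generated forces $\text{Hom}(K,K)=0$. Applying this to $\text{id}_K$ gives $K=1$, so $a$ is injective. Therefore $X\cong a(X)\subseteq G$ is finitely generated. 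No new obstacle appears here once part (1) and Proposition \ref{prop bigsubgroupsandquotients}(2) are in hand.
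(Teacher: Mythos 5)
Your proof is correct, and part (2) is essentially identical to the paper's argument (Corollary~\ref{cor finitetest} applied to $\Ker(a)\trianglelefteq X$, with hypothesis (d) supplied via finiteness of $X/Z(X)$ and Schur's theorem). Parts (1) and (3), however, take genuinely different routes, and the comparison is worth recording. For (1) you invoke the standard fact that a central extension of a nilpotent (resp.\ solvable) group by an abelian group is nilpotent (resp.\ solvable); this proves the stated claim, but only bounds the nilpotency class of $X$ by $\mathrm{class}(G)+1$. The paper instead uses $\text{Hom}(X,\Ker(a))=0$ together with Lemma~\ref{lemma keynilpotent}(1) to show the sharper fact that $\Gamma_i(G)=0$ forces $\Gamma_i(X)=0$, i.e.\ the class does not increase. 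That sharper conclusion is not cosmetic: the paper reuses it both in its own proof of part (3) (to verify hypothesis (d) of Corollary~\ref{cor finitetest} via $K_a\cap\Gamma_i(X)\subset\Gamma_i(X)=0$) and in Corollary~\ref{cor idemfinfin}, where preservation of \emph{$s$-nilpotency} is asserted. So if you only want Proposition~\ref{prop presofsolnilfin} as literally stated, your (1) suffices, but you lose a statement the surrounding text depends on. For (3) you bypass Corollary~\ref{cor finitetest} entirely and apply Proposition~\ref{prop bigsubgroupsandquotients}(2) to the nilpotent group $X$ with the normal subgroup $K=\Ker(a)$ and target $K$: since $\text{Hom}(X,K)=0$ and $X/K$ is finitely generated, you get $\text{Hom}(K,K)=0$, and evaluating at $\mathrm{id}_K$ kills $K$. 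This is clean, non-circular (Proposition~\ref{prop bigsubgroupsandquotients} is established before generalized subgroups appear), and arguably more direct than the paper's detour through Corollary~\ref{cor finitetest}; what it buys is economy, at the cost of not exercising the class-preservation fact from (1).
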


\begin{proof}
\noindent (1):\quad
Assume $G$  is  nilpotent and $\Gamma_i(G)=0$. We claim that $\Gamma_i(X)=0$. The assumption $\Gamma_i(G)=0$ implies that
$\Gamma_i(X)$ is in the kernel of $a$ and hence, according to~\ref{prop charcCmono}, it is central in $X$.
It follows that $\Gamma_{i+1}(X)=0$ and $X$ is a nilpotent group.
We can now  use Lemma~\ref{lemma keynilpotent}(1). If $\Gamma_i(X)$ were non-trivial,
 there would be a non-trivial homomorphism $X\ra \Gamma_i(X)$. The composition of this homomorphism with the inclusion $\Gamma_i(X)\subset \text{Ker}(a)$ would be then  also non-trivial. This  contradicts the fact that $\text{\rm Hom}(X,\text{\rm Ker}(a))=0$ (see~\ref{prop charcCmono}).
Consequently $\Gamma_i(X)=0$.

Similar argument works for solvable groups. If $G^{(i)}=0$, then $X^{(i)}\subset \text{Ker}(a)$ and
hence $X^{(i)}$ is central in $X$.  \smallskip This implies that $X^{(i+1)}=0$ and consequently $X$ is solvable.

\noindent (2):\quad
Assume $G$ is finite.  We  apply~\ref{cor finitetest} to the subgroup $K_a:=\text{Ker}(a)\trianglelefteq  X$ to  prove that $K_a$ is finite. It would then follow that $X$ is also finite.  Since $K_a$ is central in $X$, it is abelian and hence nilpotent.
This is hypothesis (a) of~\ref{cor finitetest}. Hypothesis (b)
of~\ref{cor finitetest} is condition (b) in~\ref{prop charcCmono}.
As $G$ is finite, then so is its subgroup $a(X)\cong X/K_a$. In particular this quotient is finitely generated and we get hypothesis (c) of~\ref{cor finitetest}. As $X/K_a$ is finite and $K_a$ is central in $X$, the quotient $X/Z(X)$ is also finite.
It follows that the commutator $[X,X]$ is  finite (see~\cite[10.1.4, p.~287]{Rob}). In particular $K_a\cap [X,X]$ is finite and we get
hypothesis (d)  of~\ref{cor finitetest}.  We can then conclude that $K_a$ is a finite group and $K_a\subset \Gamma_i(X)$ for any $i$.
\smallskip

\noindent (3):\quad
Assume $G$ is finitely generated and nilpotent.
As in (2) we will apply~\ref{cor finitetest} to
the subgroup $K_a=\text{Ker}(a)\trianglelefteq  X$.
Hypotheses (a) and (b) of~\ref{cor finitetest} are clear. Since $G$ is finitely
generated and nilpotent, then so is any of its subgroups. In particular  $X/K_a$ is finitely generated. This shows
that hypothesis (c)  of~\ref{cor finitetest} holds.
As $G$ is nilpotent, there is  $i$ for which $\Gamma_i(G)=0$. It then follows that
$\Gamma_i(X)$ is also trivial (see the proof of part (1)). In particular $K_a\cap \Gamma_i(X)$ is finite.
We can conclude that $K_a\subset \Gamma_i(X)=0$ and hence $a$ is an injection.
\end{proof}

%%%%%%%%%%%%%%%%%%%%%%%%%%%%%%%%%%%%%%%%%%%%%%%%%%%%%%%%%%%%%%%%%%%%
\begin{cor}\label{cor idemfinfin}
%%%%%%%%%%%%%%%%%%%%%%%%%%%%%%%%%%%%%%%%%%%%%%%%%%%%%%%%%%%%%%%%%%%%%%%%%%%%%%%%%
Let  $(\phi:\text{\rm Groups}\ra\text{\rm Groups},\epsilon: \phi\ra\text{\rm id})$ be an
idempotent functor.    If $G$ is  $s$-nilpotent, or solvable, or finite, or finitely generated and $s$-nilpotent, then so is
 $\phi(G)$.
 \end{cor}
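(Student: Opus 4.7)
The plan is to deduce the corollary as a direct consequence of Proposition~\ref{prop presofsolnilfin}, after identifying the augmentation of an idempotent functor as a generalized subgroup. By Proposition~\ref{prop identifyingidemp}, for any idempotent functor $(\phi,\epsilon)$, the map $\Hom(\phi(G),\epsilon_G)\colon\Hom(\phi(G),\phi(G))\to\Hom(\phi(G),G)$ is a bijection; in particular it is injective, which is exactly the defining condition of a generalized subgroup in Definition~\ref{def gensubgroups}. Hence $\epsilon_G\colon\phi(G)\to G$ qualifies as a generalized subgroup of $G$, and the corollary reduces to applying Proposition~\ref{prop presofsolnilfin} with $X=\phi(G)$ and $a=\epsilon_G$.

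Given this reduction, each clause follows essentially for free. The finite case is Proposition~\ref{prop presofsolnilfin}(2) verbatim. For $s$-nilpotency, I would point out that the proof of Proposition~\ref{prop presofsolnilfin}(1) establishes a sharper statement than its formulation records: starting from $\Gamma_s(G)=1$, one gets $\Gamma_s(\phi(G))\subset\Ker(\epsilon_G)$, which is therefore central in $\phi(G)$; then $\Hom(\phi(G),\Ker(\epsilon_G))=0$ combined with Lemma~\ref{lemma keynilpotent}(1) forces $\Gamma_s(\phi(G))=1$. Thus the nilpotency class of $\phi(G)$ is at most that of $G$, which is exactly what preservation of $s$-nilpotency asserts. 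The same pattern of argument with the derived series in place of the lower central series yields preservation of solvability.

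For the finitely generated $s$-nilpotent case, Proposition~\ref{prop presofsolnilfin}(3) adds that $\epsilon_G$ is in fact injective, so $\phi(G)$ embeds as a subgroup of $G$. Since finitely generated nilpotent groups are Noetherian, every subgroup is again finitely generated, so $\phi(G)$ is finitely generated; its $s$-nilpotency follows from the previous paragraph.

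There is essentially no substantive obstacle here: the real work has been done in Propositions~\ref{prop identifyingidemp} and~\ref{prop presofsolnilfin}, and the corollary is a packaging exercise. The only mild care needed is to read off the numerical invariant (nilpotency class) from the proof of~\ref{prop presofsolnilfin}(1), rather than merely the qualitative statement that nilpotency is preserved.
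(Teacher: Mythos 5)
Your proposal is correct and follows the paper's own route exactly: identify $\epsilon_G\colon\phi(G)\to G$ as a generalized subgroup via Proposition~\ref{prop identifyingidemp} (the paper phrases this as ``a cellular cover, in particular a generalized subgroup''), then invoke Proposition~\ref{prop presofsolnilfin}. Your added observation that the $s$-nilpotent clause requires reading the class bound $\Gamma_s(G)=1\Rightarrow\Gamma_s(\phi(G))=1$ off the \emph{proof} of Proposition~\ref{prop presofsolnilfin}(1), rather than its qualitative statement, is a legitimate refinement that the paper's one-line proof glosses over.
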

%notechange
\begin{proof}
Recall from~\ref{prop identifyingidemp} that the map $\epsilon_G: \phi(G)\ra G$ is a cellular cover.  In particular, it is
a generalized subgroup, and so the corollary follows from~\ref{prop presofsolnilfin}.
\end{proof}

An inclusion $X\subset G$ is of course  an   example of a generalized subgroup of $G$.
In the case $G$ is finitely generated and nilpotent all the generalized subgroups are inclusions
(see~\ref{prop presofsolnilfin}(3)). In this case
$\gensub{G}$
is simply the collection of all the subgroups of $G$. For example
the set  $\gensub{{\mathbf Z}/n}$, of  subgroups of the cyclic group ${\mathbf Z}/n$ ($n>0$), can be enumerated by the set
$\{k\in{\mathbf Z}\ |\ k>0\text{ and } k\text{ divides }n\}$ of all positive divisors of $n$.
For any such $k$, the corresponding subgroup is generated by $n/k$ and is isomorphic to ${\mathbf Z}/k$.
Note that the inclusion $\mathbf Z/k\subset \mathbf Z/n$ is not only a generalized subgroup but it is also  a cellular  cover. Thus  in this case we have an equality $\cov{ \mathbf Z/n}=\gensub{{\mathbf Z}/n}$. More generally let $A$ be a finite abelian group. Recall that, for an integer $k$, the $k$-torsion subgroup of $A$ consists of all
  $a\in A$ for which   $ka=0$.

%%%%%%%%%%%%%%%%%%%%%%%%%%%%%%%%%%%%%%%%%%%%%%%%%%%%%%%%%%
\begin{prop}\label{prop covfabelian}
%%%%%%%%%%%%%%%%%%%%%%%%%%%%%%%%%%%%%%%%%%%%%%%%%%%%%%%%%%%
If  $A$ is a finite abelian group, then:
\[\cov{A}=\{X\in\gensub{A}\ |\ X \text{ is the  $k$-torsion subgroup of $A$ for some $k$}\}\]
\end{prop}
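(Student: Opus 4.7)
The plan is as follows. Since $A$ is finite abelian, in particular finitely generated and nilpotent, Proposition~\ref{prop presofsolnilfin}(3) tells us that every generalized subgroup of $A$ is realized by an injection. Hence $\gensub{A}$ may be identified with the ordinary lattice of subgroups of $A$, and the question reduces to characterizing which inclusions $X\hookrightarrow A$ are cellular covers, i.e.\ for which every homomorphism $f:X\ra A$ has image contained in $X$ (so that the bijection $\Hom(X,X)\ra\Hom(X,A)$ is witnessed by factoring through the inclusion).

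For the easy direction, suppose $X=A[k]:=\{a\in A\mid ka=0\}$ for some positive integer $k$. Then for any homomorphism $f:X\ra A$ and any $x\in X$ one has $k\cdot f(x)=f(kx)=0$, so $f(x)\in A[k]=X$. Hence $f$ factors through $X$, and the inclusion is a cellular cover.

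For the harder converse, suppose the inclusion $X\subset A$ is a cellular cover. Set $k:=\exp(X)$, the exponent of the finite abelian group $X$, so that $X\subset A[k]$ automatically. To prove the reverse inclusion, pick an arbitrary $y\in A[k]$; we want $y\in X$. The group $X$ is finite abelian of exponent $k$, so it contains an element $x_0$ of order exactly $k$, and the cyclic subgroup $\langle x_0\rangle\cong\zz/k$ splits off as a direct summand of $X$, giving a decomposition $X=\langle x_0\rangle\oplus X'$. Because $y\in A[k]$ has order dividing $k=\text{ord}(x_0)$, the assignment $x_0\mapsto y$, $X'\mapsto 0$ defines a well-defined homomorphism $f:X\ra A$. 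The cellular cover hypothesis forces $f(X)\subset X$, hence $y=f(x_0)\in X$. Thus $A[k]\subset X$, so $X=A[k]$ as required.

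The only real obstacle is the converse, and the key input is the splitting lemma for finite abelian groups (existence of an element of order $\exp(X)$ together with a complementary summand), which allows us to cook up a homomorphism $X\ra A$ realizing a prescribed element $y\in A[k]$ as its image of $x_0$; everything else then follows from Proposition~\ref{prop presofsolnilfin}(3) and Proposition~\ref{prop charcCmono}.
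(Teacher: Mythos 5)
Your proof is correct and follows essentially the same route as the paper's: the easy direction is identical, and in the converse your homomorphism (project $X=\langle x_0\rangle\oplus X'$ onto the cyclic summand of maximal order $k$ and send $x_0$ to a prescribed $k$-torsion element $y$) is exactly the paper's composite of a surjection $X\onto\mathbf{Z}/k$ with a map $\mathbf{Z}/k\ra A$ hitting $y$. Your preliminary reduction via Proposition~\ref{prop presofsolnilfin}(3), identifying cellular covers with inclusions whose target absorbs all homomorphisms from $X$, is also the one the paper relies on.
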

\begin{proof}
If $X\subset A$ is the $k$-torsion subgroup, then $X$ is $k$-torsion. Since any homomorphism $f:X\ra A$
takes the $k$-torsion elements to the $k$-torsion elements, the image of $f$ sits in the subgroup $X\subset A$.
This means that $X\subset A$ is a cellular cover.

Let $X\subset A$ be a cellular cover and  $k$ be the exponent of $X$, i.e., the smallest positive integer $k$ for which $kX=0$. Since $X$ is finite,  there is a surjection
$X\onto{\mathbf Z}/k$. For any $k$-torsion element $x\in A$, consider the composition of this
surjection  $X\onto {\mathbf Z}/k$ and a homomorphism ${\mathbf Z}/k\ra A$ that maps some generator to the element $x$.
Since $X\subset A$ is a cellular cover,  the image of this composition has to lie in $X$. It follows that $X$ contains all the
$k$-torsion elements of $A$. As $X$ consists of $k$-torsion elements, $X$ is the $k$-torsion subgroup of $A$.
\end{proof}

%%%%%%%%%%%%%%%%%%%%%%%%%%%%%%%%%%%%%%%%%%%%%%%%%%%%%%%%%
%%%%%%%%%%%%%%%%%%%%%%%%%%%%%%%%%%%%%%%%%%%%%%%%%%%%%%%%%%
%%%%%%%%%%%%%%%%%%%%%%%%%%%%%%%%%%%%%%%%%%%%%%%%%%%%%%%%%
%section5
\section{The initial cellular cover}\label{sec initialcover}
%%%%%%%%%%%%%%%%%%%%%%%%%%%%%%%%%%%%%%%%%%%%%%%%%%%%%%%%%
%%%%%%%%%%%%%%%%%%%%%%%%%%%%%%%%%%%%%%%%%%%%%%%%%%%%%%%%
%%%%%%%%%%%%%%%%%%%%%%%%%%%%%%%%%%%%%%%%%%%%%%%%%%%%%%%
The aim of this section is to construct
an  example of a cellular  cover of a finite group, which we call the {\bf initial cellular cover}, a generalization of the  well known universal cover of a perfect group.  This cellular cover will be used in our classification results in the following sections. The information about $G$  needed
 for our construction is  contained in the first two homology groups of $G$.
 We therefore start with a brief recollection of
some facts about  the first two homology groups of finite groups and central extensions.
We do it  for self containment and to set up notation. We refer the reader to e.g.~\cite{G, Ka, Rob, W}, for further information.

Recall that, for two finite abelian groups $A$ and $B$, the groups $A\otimes B$,
$\text{Hom}(A,B)$, $\text{Hom}(B,A)$, $B\otimes A$, $\text{Ext}^1(A,B)$, and $\text{Ext}^1(B,A)$ are isomorphic.
Thus all these groups are zero if and only if the orders of $A$ and $B$ are relatively prime.

For a group $G$,  $H_n(G)$ denotes the $n$-th integral homology group of $G$. The first  homology group $H_1(G)$ is naturally isomorphic to the abelianization $G/[G,G]$ of $G$. Via this isomorphism, for a homomorphism $f:X\ra G$, $H_1(f):X/[X,X]\ra G/[G,G]$ is given by $x[X,X]\mapsto f(x)[G,G]$.
The second homology group $H_2(G)$ is also called the Schur multiplier of $G$.
Recall that if $G$ is finite, then, for any $n$, $H_n(G)$  is also a finite  group
whose exponent  divides
$|G|$.   If $K$ is  finite and  cyclic, then $H_2(K)=0$.

For an abelian group $K$ and a group $G$, a central extension of $G$ by $K$ is a group $X$ containing $K$ in its center
and a surjective homomorphism $f:X\onto G$ for which $\text{Ker}(f)=K$. Two such central extensions
$f:X\onto G$ and $f':X'\onto G$ are equivalent if there is a homomorphism $h:X\ra X'$
for which $f'h = f $ and $h$ restricted to $K$ is the identity. Such $h$ necessarily has to be an isomorphism.
Let us recall that  the equivalence classes of central extensions of $G$ by $K$ form a set which can be identified with the second cohomology group $H^2(G,K)$
(see \cite[11.1.4, p.~318]{Rob}).  An effective tool to study the group $H^2(G,K)$ is the universal
coefficient exact sequence (\cite[11.4.18, p.349]{Rob}):
\[
0\xrightarrow{} {\rm Ext}^1(H_1(G), K)\xrightarrow{} H^2(G,K)\xrightarrow{\mu} \Hom(H_2(G),K)\xrightarrow{}
0
\]

If $f:X\onto G$ represents an  equivalence class of a central extension of $G$ by $K$, then  the homomorphism $\mu(f):H_2(G)\ra K$ is called the differential of $f$.  This differential fits into the following exact sequence (\cite[2.5.6]{Ka}), called the exact sequence of $f$:
\[H_2(X)\xrightarrow{H_2(f)} H_2(G)\xrightarrow{\mu(f)} K\xrightarrow{\alpha} H_1(X)\xrightarrow{H_1(f)} H_1(G)\rightarrow 0
\]
where the homomorphism $\alpha$ is given by $K\ni x\mapsto x[X,X]\in X/[X,X]=H_1(X)$.
This sequence is functorial. This means that, for two central extensions $f:X\onto G$ of $G$ by $K_f$ and $g:Y\onto H$ of $H$ by $K_g$ that fit into the following commutative diagram:
\[\xymatrix{
K_f\ar@{^(->}[r]\dto^{h_1} & X\ar@{->>}[r]^f\dto^{h_1} & G\dto^{h}\\
K_g\ar@{^(->}[r]& Y\ar@{->>}[r]^g & H
}\] the following diagram of the their  exact sequences also commutes:
\[\xymatrix{
H_2(X)\rto^{H_2(f)}\dto_{H_2(h_1)} & H_2(G)\dto|{H_2(h)} \rto^{\mu(f)} & K_f\rto\dto^{h_1} & H_1(X)\rto^{H_1(f)}\dto|{H_1(h_1)} & H_1(G)\dto^{H_1(h)}\rto & 0\\
H_2(Y)\rto^{H_2(g)} & H_2(H) \rto^{\mu(g)} & K_g\rto\ & H_1(Y)\rto^{H_1(g)} & H_1(H)\rto & 0
}\]

Under the assumption that $X$ is finite (it is actually enough to assume that only $K$ is finite), the exact sequence of $f$ can be extended one step further  to an exact sequence, called  also the  exact sequence of $f$:
\[H_1(X)\otimes K\xrightarrow{} H_2(X)\xrightarrow{H_2(f)} H_2(G)\xrightarrow{\mu(f)} K\xrightarrow{\alpha} H_1(X)\xrightarrow{H_1(f)} H_1(G)\rightarrow 0
\]

\begin{Def}\label{def 2minus1}
For a finite group  $G$,  $H_{2\setminus 1}(G)$ denotes  the localization
$H_2(G)[S^{-1}]$ where $S$ is the set of primes that divide the order of $H_1(G)$.
\end{Def}

The group  $H_{2\setminus 1}(G)$ is simply the quotient of
$H_2(G)$ by the $S$-torsion, and the localization homomorphism $H_2(G)\ra H_2(G)[S^{-1}]=H_{2\setminus 1}(G)$ is the quotient homomorphism.
Since the orders of  $H_2(G)[S^{-1}]$ and $H_1(G)$ are coprime, the group
$\text{Ext}^1(H_1(G), H_{2\setminus 1}(G))$
is trivial. The  homomorphism $\mu:H^{2}(G,H_{2\setminus 1}(G))\ra \text{Hom}(H_2(G),H_{2\setminus 1}(G))$
 is therefore an isomorphism. It follows that   there is a unique central extension $e_G:E\onto G$ of
$G$ by $H_{2\setminus 1}(G)$ whose differential $\mu(e_G)$ is the localization homomorphism:
\[\xymatrix{H_2(G)\rrto^(.41){\text{localization}}\ar@/ _ 15pt/[rrr]|{\mu(e_G)} & &  H_2(G)[S^{-1}]\ar@{=}[r]&H_{2\setminus 1}(G).
}\]
We call the extension    $e_G:E\onto G$  the {\bf  initial extension} of $G$.
In the case $G$ is perfect, i.e., if $H_1(G)=0$, then $H_{2\setminus 1}(G)=H_{2}(G)$ and the initial
extension is the universal central extension of $G$.

The key property of the initial
extension of a finite group $G$  is that its differential $\mu(e_G):H_2(G)\ra H_{2\setminus 1}(G)$ is a surjection
(this means that $e_G$  is a, so called, stem extension).

%%%%%%%%%%%%%%%%%%%%%%%%%%%%%%%%%%%%%%%%%%%%%%%%%%%%%%%%%%%%%%%
\begin{prop}\label{prop initialcov}
%%%%%%%%%%%%%%%%%%%%%%%%%%%%%%%%%%%%%%%%%%%%%%%%%%%%%%%%%%%%%%%%
Let $G$ be a finite group and $f:X\onto G$ be a central extension of $G$ by $H_{2\setminus 1}(G)$ whose
differential $\mu(f):H_2(G)\ra H_{2\setminus 1}(G)$ is a surjection. Then:
\begin{enumerate}
\item $H_1(f):H_1(X)\ra H_1(G)$ is an isomorphism.
\item The following is an exact sequence:
\[0\ra H_2(X)\xrightarrow{H_2(f)} H_2(G)\xrightarrow{\mu(f)}H_{2\setminus 1}(G)\ra 0\]%%%%%%%.
\item $H^2(X,H_{2\setminus 1}(G))=0$.
\item The homomorphism  $f:X\onto G$   is a cellular cover (i.e.~$\text{\rm Hom}(X,f)$ is a bijection).
\item The cellular covers $f:X\onto G$  and  $e_G:E\onto G$ are equivalent.
\end{enumerate}
\end{prop}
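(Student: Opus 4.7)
My plan is to extract (1) and (2) directly from the extended exact sequence of the central extension $f$,
\[
H_1(X)\otimes K \to H_2(X) \xrightarrow{H_2(f)} H_2(G) \xrightarrow{\mu(f)} K \xrightarrow{\alpha} H_1(X) \xrightarrow{H_1(f)} H_1(G) \to 0,
\]
where $K = H_{2\setminus 1}(G)$. Surjectivity of $\mu(f)$ forces $\alpha = 0$ and hence $H_1(f)$ is an isomorphism, which is (1). Then $\Ker(\mu(f))$ equals the image of $H_2(f)$, so for (2) I only need that $H_2(f)$ is injective, i.e.\ that the image of $H_1(X)\otimes K$ is trivial. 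Using (1), $H_1(X)\cong H_1(G)$; and by the very definition of $H_{2\setminus 1}(G) = H_2(G)[S^{-1}]$, the order of $K$ is coprime to that of $H_1(G)$, so $H_1(X)\otimes K = 0$.

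For (3) I would feed the information from (1) and (2) into the universal coefficient sequence
\[
0 \to \text{Ext}^1(H_1(X),K) \to H^2(X,K) \to \Hom(H_2(X),K) \to 0.
\]
Coprimality of $|H_1(X)|$ and $|K|$ kills the $\text{Ext}^1$ term. For the $\Hom$ term, (2) identifies $H_2(X)$ with $\Ker(\mu(f))$, which is precisely the kernel of the localization $H_2(G)\to H_2(G)[S^{-1}]$, hence is $S$-torsion. But $K$ is $S$-local and has no $S$-torsion, so $\Hom(H_2(X),K) = 0$, proving $H^2(X,K) = 0$.

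With (3) in hand, (4) is a standard obstruction argument. Given any $g:X\to G$, pull $f$ back along $g$ to obtain a central extension of $X$ by $K$; its class lives in $H^2(X,K)=0$ and is therefore trivial, so the pull-back splits and supplies a lift $\tilde g:X\to X$ with $f\tilde g = g$. Uniqueness is even easier: two lifts of $g$ differ by a set-map $X\to K$ which, because $K\subseteq Z(X)$, is a group homomorphism, hence factors through $H_1(X)\cong H_1(G)$, and so is trivial by the coprimality of orders. For (5), I would observe that $e_G$ itself satisfies the hypotheses of the proposition (by construction its differential is the localization map, which is surjective), so (1)--(4) also apply to $e_G$. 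Applying the same lifting argument to the maps $X\to G$ and $E\to G$ produces bijections $\Hom(X,E)\to\Hom(X,G)$ and $\Hom(E,X)\to\Hom(E,G)$, and Lemma~\ref{lem isoAB}(1) then delivers the equivalence.

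The main technical step is (3); once both the coprimality of orders and the description of $\Ker(\mu(f))$ as the $S$-torsion of $H_2(G)$ are combined to annihilate the two outer terms of the universal coefficient sequence, parts (4) and (5) follow by routine obstruction theory.
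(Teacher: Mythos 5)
Your proposal is correct and follows essentially the same route as the paper's proof: the extended exact sequence of the extension for (1) and (2), the universal coefficient sequence together with the identification of $H_2(X)$ with the $S$-torsion of $H_2(G)$ for (3), the split pull-back obstruction argument for surjectivity in (4) with centrality of the kernel plus coprimality for injectivity, and the two-way lifting for (5). The only cosmetic difference is that you conclude (5) by citing Lemma~\ref{lem isoAB}(1), whereas the paper carries out the same cancellation ($e_Ghg=e_G$, $fgh=f$, hence $hg=\mathrm{id}$, $gh=\mathrm{id}$) by hand.
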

\begin{proof}
Since $G$ is finite, $H_2(G)$ is finite and so is its quotient
$ H_{2\setminus 1}(G)$. The group $X$ is then also finite and we have the following  exact sequence:
  \[H_1(X)\otimes H_{2\setminus 1}(G)\xrightarrow{} H_2(X)\xrightarrow{H_2(f)} H_2(G)\xrightarrow{\mu(f)} H_{2\setminus 1}(G)\xrightarrow{\alpha} H_1(X)\xrightarrow{H_1(f)} H_1(G)\rightarrow 0
\]

 \noindent (1):\quad  As $\mu(f)$ is a surjection, the homomorphism $\alpha$, in the above sequence,   is trivial, and hence  $H_1(f):H_1(X)\ra H_1(G)$ is an isomorphism. This is statement (1).
\smallskip

 \noindent (2):\quad The orders of
$H_1(G)$ and  $H_{2\setminus 1}(G)$ are coprime and thus  $H_1(G)\otimes H_{2\setminus 1}(G)=0$.
Using  statement (1), we  then get  $H_1(X)\otimes H_{2\setminus 1}(G)=0$. The homomorphism
$H_2(f)$ is therefore an injection which  proves statement (2).
\smallskip

 \noindent (3):\quad By the universal
coefficient exact sequence, to show the statement, we need to prove that
$\text{Ext}^1(H_1(X),H_{2\setminus 1}(G))=0$ and $\text{Hom}(H_2(X), H_{2\setminus 1}(G))=0$.
The triviality of $\text{Ext}^1(H_1(X),H_{2\setminus 1}(G))$ follows from the fact that the orders of $H_1(X)=H_1(G)$ and  $H_{2\setminus 1}(G)$ are coprime.

Since   $H_{2\setminus 1}(G)$ is the localization $H_2(G)[S^{-1}]$, where $S$ is the set of primes that divide the
order of $H_1(G)$, the homomorphism $\mu(f)$ factors uniquely as:
\[\xymatrix{
H_2(G)\rrto^(.45){\text{localization}}\ar@/ _ 15pt/[rrr]|{\mu(f)} & & H_{2\setminus 1}(G)\rto^{h} & H_{2\setminus 1}(G).
}\]
The surjectivity of     $\mu(f)$ implies the surjectivity of   $h$. As a surjective homomorphism between finite groups, $h$ is an isomorphism.
The kernel of $\mu(f)$, which by (2) is given by $H_2(X)$, is therefore  isomorphic to the kernel of the
localization homomorphism $H_2(G)\ra H_2(G)[S^{-1}]$.
The  primes dividing the order of $H_2(X)$ are thus  among the primes dividing the order of $H_1(G)$. This means that the orders of $H_2(X)$  and $H_{2\setminus 1}(G)$ are  coprime and hence the group $\text{Hom}(H_2(X),H_{2\setminus 1}(G))$ is also trivial.
\smallskip

 \noindent (4):\quad  We need to  show $\text{Hom}(X,f):\text{Hom}(X,X)\ra \text{Hom}(X,G)$  is a bijection.
 The kernel $H_{2\setminus 1}(G)$ of $f:X\onto G$ is central in $X$. Moreover, as
 the orders of $H_{2\setminus 1}(G)$ and $H_1(X)$ are relatively prime,
 $\text{Hom}(X,H_{2\setminus 1}(G))=\text{Hom}(H_1(X),H_{2\setminus 1}(G))=0$.
 The injectivity of  $\text{Hom}(X,f)$ follows then from~\ref{prop charcCmono}.

 It remains to prove that $\text{Hom}(X,f):\text{Hom}(X,X)\ra \text{Hom}(X,G)$ is also surjective.  Let $g:X\ra G$ be an arbitrary homomorphism. Consider the following commutative diagram, where the right hand square is a pull-back square:
 \[\xymatrix{
H_{2\setminus 1}(G) \ar@{=}[d] \ar@{^(->}[r] & P\repi^{f'}\dto^{g'}  & X\dto^g\\
  H_{2\setminus 1}(G)\rto \ar@{^(->}[r]& X\repi^{f} & G
 }\]
 Note that $f':P\onto X$ represents a central extension of $X$ by $H_{2\setminus 1}(G)$.  According to statement (3) any such central extension is split.  Let $s:X\ra P$ be its section. The composition $g's:X\ra X$ is then a homomorphism for which
 $fg's=g$. This shows surjectivity of  $\text{Hom}(X,f)$.
\smallskip

 \noindent (5):\quad  The argument to show that $f:X\ra G$ and $e_G:E\ra G$ are equivalent cellular covers  is the same as in the proof of the surjectivity
 in the previous statement.  Consider the following commutative diagram, where the bottom right square is a pull-back square:
 \[\xymatrix{
 &  H_{2\setminus 1}(G) \ar @{=}[r]\dmono &  H_{2\setminus 1}(G) \dmono\\
 H_{2\setminus 1}(G) \ar @{=}[d]\rmono  & P\repi^{f'}\depi^{e'}  & E\depi^{e_G}\\
  H_{2\setminus 1}(G)\rmono & X\repi^{f} & G
 }\]
Both $e':P\onto X$ and $f':P\onto E$ represent central extensions. Statement (3)  implies that these extensions are split.
 Using their sections we can construct homomorphisms $h:X\ra E$ and $g:E\ra X$ for which $e_Gh=f$ and $fg=e_G$.
 It follows that $e_Ghg=e_G$ and $fgh=f$. As $e_G$ and $f$ are cellular  covers, we can conclude $hg=\text{id}_E$ and
 $gh=\text{id}_X$.  This proves  (5).
\end{proof}

%%%%%%%%%%%%%%%%%%%%%%%%%%%%%%%%%%%%%%%%%%%%%%%%%%%%%%%%%%
\begin{Def}
%%%%%%%%%%%%%%%%%%%%%%%%%%%%%%%%%%%%%%%%%%%%%%%%%%%%%%%%
Let $G$ be a finite group. We call the homomorphism $e_G:E\ra G$  the {\bf initial cellular cover} of $G$.
We will use the same name for the equivalence class in $\cov{G}$ represented by
$e_G:E\ra G$.
\end{Def}

%%%%%%%%%%%%%%%%%%%%%%%%%%
%%%%%%%%%%%%%%%%%%%%%%%%%%
%%%%%%%%%%%%%%%%%%%%%%%%%%
%section6
\section{Generalized subgroups of a finite group}\label{sec gensuboffinite}
%%%%%%%%%%%%%%%%%%%%%%%%%%
%%%%%%%%%%%%%%%%%%%%%%%%%%
%%%%%%%%%%%%%%%%%%%%%%%%%%%

The collection $\cov{G}$ is a subcollection of $\gensub{G}$.
Thus to show for example that $G$ has finitely many cellular covers it is  enough to show that $\gensub{G}$ is a finite set.
The aim of this section is to do that under the assumption that $G$ is a finite group.

For a homomorphism $a:X\ra G$, we use the symbol $I_a$ to denote its  image $\text{im}(a)$.
This is one of the two invariants we  use  to enumerate generalized subgroups of $G$.
Note that if generalized subgroups $a:X\ra G$ and $b:Y\ra G$ are equivalent, then  they have the same images. Thus the  function $a\mapsto I_a$ is well define on the collection  $\gensub{G}$ of equivalence classes of generalized subgroups. Furthermore it is immediate from the definition that
a homomorphism $a:X\ra G$  is a generalized subgroup of $G$ if and only
if $a:X\onto I_a$ is a generalized subgroup of $I_a$. Thus any generalized subgroup is a composition of a surjective generalized subgroup and an inclusion.
This is the  reasons why surjective generalized subgroups are important for us.

%%%%%%%%%%%%%%%%%%%%%%%%%%%%%%%%%%%%%%%%%%%%%%%%%
%6.2
\begin{Def}
%%%%%%%%%%%%%%%%%%%%%%%%%%%%%%%%%%%%%%%%%%%%%%%%%%%%%%%%%%%%%
 $\surgensub{G}$ denotes the collection of equivalence classes of generalized subgroups of $G$
 which are represented by surjective homomorphisms.
 \end{Def}
For any subgroup $I$ of $G$, let $\text{in}_I:\surgensub{I}\subset \gensub{G}$ be the function that assigns to
an equivalence class of a surjective generalized subgroup $a:X\onto I$ of $I$ the equivalence class of the composition $a:X\onto I\subset G$.
By summing up these inclusions over all
the subgroups of $G$, it is then clear that we get a bijection:
\begin{prop} The following function is a bijection:
\[\coprod_{I\subset G}\text{\rm in}_I:\coprod_{I\subset G} \surgensub{I}\ra  \gensub{G}\]
\end{prop}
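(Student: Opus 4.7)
The plan is to construct an explicit inverse, using the fact that any homomorphism factors through its image. Given a generalized subgroup $a : X \to G$, let $I_a = \text{im}(a)$ and let $\bar{a} : X \onto I_a$ be the corestriction. I would first observe (and this is already recorded in the paragraph just before the statement) that $a : X \to G$ is a generalized subgroup of $G$ if and only if $\bar{a} : X \onto I_a$ is a generalized subgroup of $I_a$; this is immediate from Proposition~\ref{prop charcCmono} since $\text{Ker}(a) = \text{Ker}(\bar{a})$, and the characterization in terms of $\text{Ker}$ doesn't see the target. Define the candidate inverse $\Psi : \gensub{G} \to \coprod_{I \subset G} \surgensub{I}$ by $[a : X \to G] \mapsto (I_a, [\bar{a} : X \onto I_a])$. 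One first checks $\Psi$ is well defined: if $a : X \to G$ and $a' : X' \to G$ are equivalent via $h : X \to X'$ with $a'h = a$, then automatically $I_a = I_{a'}$ and $h$ restricts to an equivalence of $\bar{a}$ and $\bar{a}'$ in $\surgensub{I_a}$.

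The next step is to verify that $\Psi$ is a two-sided inverse of $\coprod_{I \subset G}\text{in}_I$. In one direction, starting with $(I, [b : Y \onto I]) \in \surgensub{I}$ and applying $\text{in}_I$ and then $\Psi$, we obtain $(I_b, [\bar{b}]) = (I, [b])$ since $b$ is already surjective onto $I$. In the other direction, applying $\Psi$ to $[a : X \to G]$ and then $\text{in}_{I_a}$ returns the class of the composition $X \onto I_a \hookrightarrow G$, which equals $a$ as a homomorphism into $G$, hence represents $[a]$.

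Both verifications are essentially formal, and I do not see any genuine obstacle. The only point that needs care is the well-definedness of $\Psi$, which rests on the observation that two equivalent homomorphisms necessarily have the same image; this is what forces the decomposition over $I \subset G$ to be disjoint and makes the coproduct on the left match $\gensub{G}$ bijectively.
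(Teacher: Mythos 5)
Your argument is correct and is exactly the argument the paper has in mind: the paper records the key observation (that $a:X\ra G$ is a generalized subgroup of $G$ if and only if its corestriction $X\onto I_a$ is a generalized subgroup of $I_a$) in the paragraph preceding the statement and then declares the bijection ``clear,'' while you simply make the resulting inverse $[a]\mapsto (I_a,[\bar a])$ and the two composite identities explicit. The well-definedness point you flag (equivalent covers have the same image, since the comparison map is an isomorphism) is the only non-formal step, and you handle it correctly.
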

To
enumerate $\gensub{G}$ it thus suffices to enumerate  $\surgensub{I}$ for all  subgroups $I$ of $G$.
We will do that under the assumption that $G$ is finite.
Let us then  assume that $G$ is {\bf finite}.

We start with defining a set
 used to enumerate the collections $\surgensub{I}$.

%%%%%%%%%%%%%%%%%%%%%%%%%%%%%%%%%%%%%%%%%%%%%%%%%%%%%%%%%%%%%%%%%%
%6.1
\begin{Def}
%%%%%%%%%%%%%%%%%%%%%%%%%%%%%%%%%%%%%%%%%%%%%%%%%%%%%%%%%%%
Let $A$ be an abelian group.
\begin{enumerate}
\item Two surjections
$\sigma:A\onto K$ and $\tau:A\onto L$ are defined to be equivalent, if there is an isomorphism $h:K\ra L$ such that $h\sigma=\tau$ (such an isomorphism, if it exists, is necessary unique).
\item The symbol $\text{Quot}(A)$ denotes the set of
equivalence classes of surjections  out of $A$.
\end{enumerate}
\end{Def}

Note that the subgroup  of $A$ given by the kernel of a surjection
$\sigma:A\onto K$ depends only on the equivalence class of $\sigma$ in $\text{Quot}(A)$. It is then clear that the function
that assigns to an element $[\sigma]$ in $ \text{Quot}(A)$ the subgroup $\text{Ker}(\sigma)$ of $A$ is a bijection
between $ \text{Quot}(A)$ and  the set of all the subgroups of $A$ which, in the case $A$ is finitely generated,  coincides with the set  $\gensub{A}$. Thus for a finitely generated abelian group $A$, we
shall identify
$ \text{Quot}(A)$ with   $\gensub{A}$.
 For example let $k$ be a positive integer.  The element of  $ \text{Quot}(A)$ that corresponds to the $k$-torsion subgroup
of $A$ is denoted by $q_k$. It is represented by the surjection, denoted by the same symbol
\[
q_k:A\onto  A/(k\text{-torsion}),
\]
 that maps an element $a\in A$ to its coset.
In the case of    the cyclic group ${\mathbf Z}/n$ ($n>0$), these are all the elements of
$ \text{Quot}({\mathbf Z}/n)$.
For any $k> 0$ dividing $n$, the  $k$-torsion subgroup of ${\mathbf Z}/n$ is the subgroup generated by $n/k$.
It is the unique subgroup   isomorphic to ${\mathbf Z}/k$.    In this way $ \text{Quot}({\mathbf Z}/n)$ is in bijection with the set of all positive divisors of $n$.

From now until Definition \ref{def In(G)} we
{\bf fix a subgroup $I$ of $G$}.
We enumerate $\surgensub{I}$ using the set $\text{\rm Quot}(H_{2\setminus 1}(I))$
(recall that $H_{2\setminus 1}(I)$ denotes the localization $H_2(I)[S^{-1}]$,
where $S$ is the set of primes dividing the order of $H_1(I)$, see~\ref{def 2minus1}).
To do that we  define two functions:
\[\mu:\surgensub{I}\ra\text{\rm Quot}(H_{2\setminus 1}(I)),\ \ \ \ \ \  \ \ \ \  \Psi:\text{\rm Quot}(H_{2\setminus 1}(I))\ra \surgensub{I},\]
and show that their    compositions $\mu\Psi$ and $\Psi\mu$ are the identities. For a surjective
generalized subgroup $a:X\onto I$, the value $\mu(a)\in \text{\rm Quot}(H_{2\setminus 1}(I))$ is called the differential of $a$.
Recall that according to~\ref{prop charcCmono}, the kernel $K_a:=\text{Ker}(a)$ of $a$ is central in $X$.
Thus the homomorphism $a:X\onto I$
represents a central  extension of $I$ by $K_a$. We  use the corresponding element in
$H^2(I, K_a)$ to define the differential. First we  need:

%%%%%%%%%%%%%%%%%%%%%%%%%%%%%%%%%%%%%%%%%%%%%%%%%%%%%%%%%%%%%%%%%%%%
%6.3
\begin{prop}\label{prop propgsub}
%%%%%%%%%%%%%%%%%%%%%%%%%%%%%%%%%%%%%%%%%%%%%%%%%%%%%%%%%%%%%
Let  $a:X\onto I$ be a surjective generalized subgroup of a finite group $I$. Then:
\begin{enumerate}
\item $H_1(a):H_1(X)\ra H_1(I)$ is an isomorphism.
\item $\text{\rm Ext}^1\left(H_1(I),K_a\right)=H_1(X)\otimes K_a=0$.
\item $\mu:H^2(I,K_a)\ra \text{\rm Hom}\left(H_2(I),K_a\right)$ is an isomorphism.
\item $0\ra H_2(X)\xrightarrow{H_2(a)} H_2(I)\xrightarrow{\mu(a)} K_a\ra 0$ is an exact sequence.
\item If $Y\subset X$ is a subgroup such that $a(Y)=I$, then $Y=X$.
\end{enumerate}
\end{prop}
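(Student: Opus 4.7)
The plan is to deduce everything from a single exact sequence. Since by Proposition~\ref{prop charcCmono} the kernel $K_a$ is central in $X$, the map $a\colon X\onto I$ presents $I$ as a central extension with kernel $K_a$, and because $I$ is finite, Proposition~\ref{prop presofsolnilfin}(2) gives that $X$ is finite as well. I would then write down the associated functorial exact sequence
\[H_1(X)\otimes K_a \to H_2(X)\xrightarrow{H_2(a)} H_2(I)\xrightarrow{\mu(a)} K_a\xrightarrow{\alpha} H_1(X)\xrightarrow{H_1(a)} H_1(I)\to 0,\]
with $\alpha(k)=k[X,X]$. The one input that unlocks parts (1)--(4) is that Proposition~\ref{prop presofsolnilfin}(2) also forces $K_a\subset\Gamma_i(X)$ for every $i$, hence $K_a\subset[X,X]$, so the connecting map $\alpha$ vanishes.

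From there the first four parts fall out quickly. Part (1) is immediate: $H_1(a)$ is surjective because $a$ is, and injective because $\alpha=0$. For part (2), Proposition~\ref{prop charcCmono} gives $\Hom(X,K_a)=0$; since $K_a$ is abelian this equals $\Hom(H_1(X),K_a)$, and between two finite abelian groups vanishing of $\Hom$ forces coprime orders. Using the isomorphism $H_1(X)\cong H_1(I)$ from (1), $|H_1(I)|$ and $|K_a|$ are coprime, so $\text{Ext}^1(H_1(I),K_a)$ and $H_1(X)\otimes K_a$ both vanish. Part (3) is then the universal coefficient exact sequence combined with this $\text{Ext}^1$ vanishing. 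For part (4) the displayed sequence collapses: its leftmost term is zero by (2), making $H_2(a)$ injective, and $\alpha=0$ makes $\mu(a)$ surjective, leaving exactly the short exact sequence claimed.

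Part (5) is essentially a commutator calculation. If $Y\subset X$ satisfies $a(Y)=I$, then for each $x\in X$ one picks $y\in Y$ with $a(x)=a(y)$ to conclude $xy^{-1}\in K_a$ and $X=YK_a$. Since $K_a$ is central, commutators involving it vanish, hence $[X,X]=[YK_a,YK_a]=[Y,Y]\subset Y$. Combining with $K_a\subset[X,X]$ noted above gives $K_a\subset Y$, so $X=YK_a=Y$. The hardest aspect of the whole proof is really just noticing the double use of Proposition~\ref{prop presofsolnilfin}(2) (to get both finiteness of $X$ and $K_a\subset[X,X]$); once that is in hand, all five parts follow essentially mechanically.
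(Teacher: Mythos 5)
Your proof is correct and follows essentially the same route as the paper: the functorial exact sequence of the central extension, the vanishing of $\alpha$ via $K_a\subset[X,X]$ from Proposition~\ref{prop presofsolnilfin}(2), the coprimality of $|H_1(X)|$ and $|K_a|$ from $\Hom(X,K_a)=0$, and the $X=YK_a$ commutator argument for (5). No substantive differences.
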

\begin{proof}
Since $G$ is finite, by~\ref{prop presofsolnilfin}(2),  $X$  is also finite and we have en exact sequence
of the central extension $a:X\onto G$:
\[H_1(X)\otimes K_a\xrightarrow{} H_2(X)\xrightarrow{H_2(a)} H_2(I)\xrightarrow{\mu(a)} K_a\xrightarrow{\alpha} H_1(X)\xrightarrow{H_1(a)} H_1(I)\rightarrow 0
\]

 \noindent (1):\quad
The finiteness of  $G$ implies also that  $K_a\subset [X,X]$ (see~\ref{prop presofsolnilfin}(2)). The homomorphism
$\alpha: K_a\ra H_1(X)$, in the above  exact sequence, is then  trivial
and  $H_1(a)$ is an isomorphism. This proves (1).
\smallskip

 \noindent (2):\quad
 According to~\ref{prop charcCmono}(2), $\text{Hom}\left(H_1(X),K_a\right)=\text{Hom}(X,K_a)=0$.
The orders of $H_1(X)$ and $K_a$  are therefore relatively prime numbers.
As $H_1(X)$ and $H_1(I)$ are isomorphic (statement (1)), we get
$\text{\rm Ext}^1\left(H_1(I),K_a\right)=H_1(X)\otimes K_a=0$ which  is statement (2).
\smallskip

 \noindent (3):\quad
This is a consequence of   the universal coefficient exact sequence and the triviality of
$\text{\rm Ext}^1\left(H_1(I),K_a\right)$
(statement (2)).
\smallskip

 \noindent (4):\quad
This follows from  the exact sequence of the central extension $a:X\onto G$ above and  the triviality of
$H_1(X)\otimes K_a$ (statement (2)).
\smallskip

 \noindent (5):\quad
We have $X=YK_a$ and since $K_a$ is central in $X$ we get that
$[X,X]=[Y,Y]$.  However, as we observed earlier in the proof,
$K_a\subset [X,X]$ and it follows that $K_a\subset Y$ and so $Y=X$.
\end{proof}
\smallskip

\noindent
{\bf The differential of a surjective generalized subgroup.}
%notechange
If $a:X\onto  I$ is a surjective generalized subgroup,
then according to~\ref{prop propgsub}(3),  the homomorphism $\mu:H^2(I,K_a)\ra \text{\rm Hom}(H_2(I),K_a)$ is an isomorphism. The extension  $a:X\onto I$,
which is an element of $H^2(I, K_a)$, can be then identified with the homomorphism $\mu(a):H_2(I)\ra K_a$.
According to~\ref{prop propgsub}(4) such  homomorphisms associated with generalized subgroups  are surjections.
Furthermore, as $H_1(I)\otimes K_a=0$ (see~\ref{prop propgsub}(2)), the primes that divide the order of
$H_1(I)$ do not divide the order of $K_a$. This means that the localization
$K_a\ra K_a[S^{-1}]$ is an isomorphism, where $S$ is the set of primes that divide the order of $H_1(I)$. Consequently
$\mu(a):H_2(I)\onto K_a$ factors uniquely as:
\[\xymatrix{
H_2(I)\rrto^(.33){\text{localization}}\ar@{->>}@/ _ 15pt/[rrr]|{\mu(a)} && H_2(I)[S^{-1}]=H_{2\setminus 1}(I_a)\rto & K_a.
}\]
We will use  the same symbol $\mu(a): H_{2\setminus 1}(I)\onto K_a$ to denote the surjection in the
above factorization.
We can now define:

%%%%%%%%%%%%%%%%%%%%%%%%%%%%%%%%%%%%%%%%%%%%%%%%%%%%
%6.5
\begin{Def}
%%%%%%%%%%%%%%%%%%%%%%%%%%%%%%%%%%%%%%%%%%%%%%%%%%%%%%%
Let  $a:X\onto I$ be a surjective generalized subgroup of $I$. The
element in
$\text{Quot}(H_{2\setminus 1}(I))$ represented by  the surjection
$\mu(a): H_{2\setminus 1}(I)\onto K_a$ is called the deferential of $a$ and is denoted also
by the same symbol  $\mu(a)$.
\end{Def}

Assume now that  $a:X\onto I$ and $b:Y\onto I$ are equivalent surjective generalized subgroups of $I$ and $h:X\ra Y$ is  an isomorphism for which $bh=a$.   By the naturality of the exact sequence of  a central extension, we get a commutative diagram with exact rows:
\[\xymatrix{
0\rto & H_2(X)\rto^{H_2(a)}\dto_{H_2(h)} & H_2(I)\rto^{\mu(a)}\dto^{\text{id}} & K_a\rto \dto^{h} &0\\
0\rto & H_2(Y)\rto^{H_2(b)} & H_2(I)\rto^{\mu(b)} & K_b\rto &0
}\]
After localizing with respect to the set $S$ of primes that divide the order of $H_1(I)$, we get then
that $\mu(b):H_{2\setminus 1}(I)\onto K_b$ is the composition of $\mu(a):H_{2\setminus 1}(I)\onto K_a$ and
the isomorphism $h:K_a\ra K_b$. The surjections  $\mu(a)$ and $\mu(b)$ are thus equivalent and define the same element in $\text{Quot}(H_{2\setminus 1}(I))$.
It follows that the differential  is well defined on the collection  $\surgensub{I}$ of equivalence classes of generalized
subgroups.   In this way we get a well-defined function:
\[\xymatrix{
\surgensub{I}\ni [a:X\ra I]\ar@{|->}[rr]^{\mu} & &  \mu(a)\in \text{Quot}(H_{2\setminus 1}(I)).
}\]
which we also  denote by $\mu$.

Next we define   an inverse to $\mu$ (see \ref{thm classificationsurgsub}), which we denote by  $\Psi:\text{Quot}(H_{2\setminus 1}(I)) \ra\surgensub{I}$.
 Let us choose a surjection  $\sigma:H_{2\setminus 1}(I)\onto  K$ that represents  a given element in
$\text{\rm Quot}(H_{2\setminus 1}(I))$.
Recall that $e_I:E\onto I$ denotes the initial central extension of $I$ by $H_{2\setminus 1}(I)$ (see Section~\ref{sec initialcover}).
Define:
\[X:=\text{colim}(\xymatrix{K&  H_{2\setminus 1}(I)\ar@{->>}[l]_(.55){\sigma}\ar@{^(->}[r]& E}).\]
% \[X:=\text{colim}\left(\xymatrix{K&\xleftarrow{\sigma} H_{2\setminus 1}(I)\subset E}\right)\]
This is just $E$ divided by the kernel of the map
$H_{2\setminus 1}(I)\twoheadrightarrow K$.
Let $a:X\onto  I$ be  the homomorphism that fits into the following commutative diagram where
 $\pi:E\ra X$ is the structure map of the colimit:
 \[\xymatrix{
  H_{2\setminus 1}(I)\ar@{^(->}[r]\depi_{\sigma} & E\repi^{e_I}\dto^{\pi} & I\ar@{=}[d]^{\text{id}}\\
 K \ar@{^(->}[r] & X\repi^{a} &I
 }\]
 Note that $a:X\onto I$ is a central extension of $I$ by $K$.  By the naturality of the exact sequence of
 a central extension we get a commutative diagram of homology groups:
 \[\xymatrix{
 H_2(I)\rto^(.45){\mu(e_I)}\dto_{\text{id}} &  H_{2\setminus 1}(I)\rto\depi^{\sigma} & H_1(E)\rto^{H_1(e_I)}\dto & H_1(I)\rto\dto^{\text{id}}  & 0\\
 H_2(I)\rto^{\mu(a)} &  K\rto^(.42){\alpha} & H_1(X)\rto^{H_1(a)} & H_1(I)\rto & 0
} \]
As $\mu(e_I)$ and $\sigma$ are  surjections, then so is $\mu(a)$. The homomorphism  $\alpha$ is therefore trivial and
consequently $H_1(a):H_1(X)\ra H_1(I)$ is an isomorphism. Since $K$ is a quotient of  $H_{2\setminus 1}(I)$,
the primes that divide the order of $H_1(X)$ do not divide the order of $K$.
It follows that $\text{Hom}(X,K)=\text{Hom}(H_1(X),K)=0$.    We define  $\Psi([\sigma])$ to be the element of $\surgensub{I}$ given by the equivalence class represented by this surjective
 generalized subgroup  $a:X\onto I$. It is straight forward to check that $\Psi([\sigma])$ does not depend on the choice of a surjection  $\sigma:H_{2\setminus 1}(I)\onto K$ representing the given element in $  \text{\rm Quot}(H_{2\setminus 1}(I))$.  In this way we have a well defined function $\Psi:\text{\rm Quot}(H_{2\setminus 1}(I))\ra \surgensub{I}$.
Note further that   $\sigma =\mu(a)$. This means that $\mu\Psi=\text{id}$.

To show that $\Psi\mu$ is also the identity, let us choose a surjective  generalized subgroup $a:X\onto I$.
Let $b:Y\onto I$ be a surjective generalized subgroup representing  $\Psi\mu(a)$. We need to show that
$a$ and $b$ are equivalent.
Consider the pair $\mu(a)$.
Since
$\mu\Psi$ is the identity:
 \[\mu(b)=\mu\Psi\mu(a)=\mu(a)\]
 This means that  the differential $\mu(b)$ is equivalent
to $\mu(a)$.
As in this case the differential determines
the central extension it comes from, $a$ and $b$    are indeed
equivalent generalized subgroups.
We just have shown:

%%%%%%%%%%%%%%%%%%%%%%%%%%%%%%%%%%%%%%%%%%%%%%%%%%%%%%%%%%%%%%%%%%%%%%%%
%6.6
 \begin{thm}\label{thm classificationsurgsub}
 %%%%%%%%%%%%%%%%%%%%%%%%%%%%%%%%%%%%%%%%%%%%%%%%%%%%%%%%%%%%%%%%%
Let $I$ be a finite group. The function $\mu:\surgensub{I}\ra\text{\rm Quot}(H_{2\setminus 1}(I))$ is a bijection.
 \end{thm}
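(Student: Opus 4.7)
The plan is to exhibit an explicit inverse $\Psi:\text{\rm Quot}(H_{2\setminus 1}(I))\to \surgensub{I}$ to $\mu$, using the initial central extension $e_I:E\onto I$ of Section~\ref{sec initialcover} as the universal object from which all relevant central extensions of $I$ are pushed out. The argument decomposes into three tasks: verifying that $\mu$ is well defined, constructing $\Psi$, and checking the two composite identities $\mu\Psi=\text{id}$ and $\Psi\mu=\text{id}$.

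First, I would check that $\mu$ is a well defined function on $\surgensub{I}$. Given a surjective generalized subgroup $a:X\onto I$, Proposition~\ref{prop propgsub}(3) identifies the central extension class with a homomorphism $H_2(I)\to K_a$, which by \ref{prop propgsub}(4) is a surjection. The coprimality between $|H_1(I)|$ and $|K_a|$ (a consequence of $\text{\rm Hom}(X,K_a)=0$, via~\ref{prop propgsub}(2)) forces this surjection to factor uniquely through the localisation $H_{2\setminus 1}(I)\onto K_a$. If $h:X\to Y$ is an equivalence of generalised subgroups, then naturality of the five-term exact sequence of a central extension, followed by localisation, shows that the resulting surjections agree in $\text{\rm Quot}(H_{2\setminus 1}(I))$.

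Next I would define $\Psi$. For a representative $\sigma:H_{2\setminus 1}(I)\onto K$, form the pushout $X:=E\cup_{H_{2\setminus 1}(I)} K$ along the inclusion $H_{2\setminus 1}(I)\hookrightarrow E$ and $\sigma$, and let $a:X\onto I$ be the induced map. The kernel of $a$ is $K$, which is central because $H_{2\setminus 1}(I)$ is central in $E$; moreover the argument displayed after the definition of $\Psi$ in the main text---namely, surjectivity of $\mu(e_I)$ gives via naturality of the five-term sequence an isomorphism $H_1(a):H_1(X)\to H_1(I)$, so $|H_1(X)|$ and $|K|$ are coprime---yields $\text{\rm Hom}(X,K)=\text{\rm Hom}(H_1(X),K)=0$. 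By Proposition~\ref{prop charcCmono}, $a$ is a generalised subgroup. Choosing a different representative of $[\sigma]$ changes $K$ only up to isomorphism and yields an equivalent pushout, so $\Psi$ is well defined, and a direct naturality computation shows $\mu(a)=\sigma$, giving $\mu\Psi=\text{id}$.

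For $\Psi\mu=\text{id}$ I would invoke Proposition~\ref{prop propgsub}(3) one more time: the bijection $H^2(I,K_a)\cong \text{\rm Hom}(H_2(I),K_a)$ implies that a central extension of $I$ by $K_a$ is determined by its differential. Hence if $a:X\onto I$ and $\Psi\mu(a):Y\onto I$ have the same differential in $\text{\rm Quot}(H_{2\setminus 1}(I))$, they represent equivalent central extensions, and so equivalent generalised subgroups. The only real obstacle is the bridge between the classical $H^2$-classification and the $H_{2\setminus 1}$-picture; once one notices that the differentials arising from generalised subgroups are automatically surjective and automatically annihilate the $S$-torsion, the two parameter sets align and the bijection drops out.
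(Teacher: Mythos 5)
Your proposal is correct and follows essentially the same route as the paper: it defines the differential via Proposition~\ref{prop propgsub}, constructs the inverse $\Psi$ as a pushout of the initial extension $e_I:E\onto I$ along a representative surjection, and deduces $\Psi\mu=\text{id}$ from the fact that $\mu:H^2(I,K_a)\ra\text{\rm Hom}(H_2(I),K_a)$ is an isomorphism, so the differential determines the extension. The only point handled with the same brevity as in the paper is the passage from equivalence of differentials in $\text{\rm Quot}(H_{2\setminus 1}(I))$ (isomorphism of targets allowed) to equivalence of the resulting generalized subgroups, which is harmless.
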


For reference we record the equalities  $\mu\Psi=\text{id}$ and $\Psi\mu=\text{id}$  in the form of:

\begin{prop}\label{prop mupsiid}
Any  surjective generalized subgroup $a:X\onto I$  fits into  the following commutative ladder of short exact sequences with the
left square being a push-out:
 \[\xymatrix{
  H_{2\setminus 1}(I)\ar@{^(->}[r]\depi_{\mu(a)} & E\repi^{e_I}\dto^{\pi} & I\ar@{=}[d]^{\text{\rm id}}\\
K_a \ar@{^(->}[r] & X\repi^{a} &I
 }\]
\end{prop}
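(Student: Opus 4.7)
The proposition just records, in convenient diagrammatic form, the two equalities $\mu\Psi=\mathrm{id}$ and $\Psi\mu=\mathrm{id}$ established in the paragraphs preceding Theorem~\ref{thm classificationsurgsub}. So the plan is to show the diagram exists and to verify the pushout property by unpacking the construction of $\Psi$.

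First, I would invoke the equality $\Psi\mu=\mathrm{id}$ in $\surgensub{I}$. This says that the given surjective generalized subgroup $a\colon X\onto I$ is equivalent, in the sense of Definition~\ref{def gensubgroups}, to $\Psi([\mu(a)])$. By construction, $\Psi([\mu(a)])$ is represented by the central extension $a'\colon X'\onto I$ obtained from the pushout diagram
\[
X':=\text{colim}\bigl(\xymatrix{K_a & H_{2\setminus 1}(I)\ar@{->>}[l]_(.55){\mu(a)}\ar@{^(->}[r]& E}\bigr),
\]
equipped with the canonical map $a'$ to $I$ coming from $e_I$. The equivalence between $a$ and $a'$ is implemented by a (necessarily unique) isomorphism $h\colon X'\to X$ with $ah=a'$, whose restriction to kernels is the identity on $K_a$ (this is exactly the condition for equivalence of central extensions, and it holds because $\mu(a)=\mu(a')$ and, under the isomorphism $\mu\colon H^2(I,K_a)\to \Hom(H_2(I),K_a)$ of~\ref{prop propgsub}(3), a central extension is determined by its differential).

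Next I would transport the pushout structure along $h$. Composing the structure map $\pi'\colon E\to X'$ of the colimit with $h\colon X'\to X$ yields a homomorphism $\pi:=h\pi'\colon E\to X$ making the square
\[\xymatrix{
H_{2\setminus 1}(I)\ar@{^(->}[r]\depi_{\mu(a)} & E\dto^{\pi}\\
K_a \ar@{^(->}[r] & X
}\]
commute, because the analogous square for $\pi'$ commutes and $h$ restricts to the identity on $K_a$. The full ladder of short exact sequences is then obtained by appending the identity on $I$ on the right; commutativity $a\pi=e_I$ is forced since $a'\pi'=e_I$ and $ah=a'$. Because $h$ is an isomorphism, the left square inherits the pushout (colimit) property directly from the defining square for $X'$: any pair of homomorphisms out of $K_a$ and $E$ agreeing on $H_{2\setminus 1}(I)$ factors uniquely through $X'$, hence, via $h^{-1}$, uniquely through $X$.

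The only step that requires any care is the unique determination of central extensions by their differentials in our situation; this is not a genuine obstacle, since~\ref{prop propgsub}(2)--(3) gives $\mathrm{Ext}^1(H_1(I),K_a)=0$ and hence $\mu\colon H^2(I,K_a)\xrightarrow{\sim}\Hom(H_2(I),K_a)$ is an isomorphism, which is precisely what makes the equivalence class of $a$ recoverable from $\mu(a)$ and thus licenses identifying $a$ with $\Psi\mu(a)$ up to equivalence.
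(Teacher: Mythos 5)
Your proposal is correct and follows essentially the same route as the paper, which presents this proposition as a record of the equalities $\mu\Psi=\mathrm{id}$ and $\Psi\mu=\mathrm{id}$ established just before Theorem~\ref{thm classificationsurgsub}; you simply unpack that argument, correctly isolating the one point needing care (that the equivalence $a\sim\Psi\mu(a)$ can be realized by an isomorphism restricting to the identity on $K_a$, via the isomorphism $\mu\colon H^2(I,K_a)\to\mathrm{Hom}(H_2(I),K_a)$ from~\ref{prop propgsub}(3)) before transporting the push-out square along that isomorphism.
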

%notechange

Theorem~\ref{thm classificationsurgsub}  can be used  to enumerate all the generalized subgroups of $G$.

%%%%%%%%%%%%%%%%%%%%%%%%%%%%%%%%%%%%%%%%%%%%%%%%%%%%%%%%
\begin{Def}\label{def In(G)}
%%%%%%%%%%%%%%%%%%%%%%%%%%%%%%%%%%%%%%%%%%%%%%%%%%%%%%%%
$\text{\rm In}(G)$ is defined to be  the set of pairs $(I,\sigma)$ where $I$ is a subgroup of $G$ and
$\sigma \in\text{\rm Quot}(H_{2\setminus 1}(I))$.
\end{Def}
As a corollary of~\ref{thm classificationsurgsub} we get:

\begin{cor}\label{cor classificationgsub}
For a finite group $G$, the following function is a bijection between $\gensub{G}$ and $\text{\rm In}(G)$:
\[\xymatrix{
\gensub{G}\ni [a:X\ra G]\ar@{|->}[rr] & &  (I_a, \mu(a:X\onto I_a))\in \text{\rm In}(G)
}
\]
%The function that assigns to an element in  $\gensub{G}$ represented by $a:X\ra G$ a pair $(I_a, \mu(a:X\onto I_a))\in \text{\rm In}(G)$
%is a bijection between
%$\gensub{G}$ and $\text{\rm In}(G)$.
\end{cor}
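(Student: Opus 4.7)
The plan is to obtain this as a direct consequence of the two bijections already established in this section, organized according to the image of a generalized subgroup. First I would recall that any generalized subgroup $a:X\to G$ factors uniquely as a surjection $a:X\onto I_a$ followed by the inclusion $I_a\subset G$, and that $a:X\onto I_a$ is itself a generalized subgroup of $I_a$ (this follows directly from Definition~\ref{def gensubgroups}, since the injectivity condition on $\mathrm{Hom}(X,-)$ depends only on the map as a function into its image). Conversely, every equivalence class in $\gensub{G}$ is obtained in this way by composing a surjective generalized subgroup with the inclusion of its image.

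Second, I would invoke the bijection $\coprod_{I\subset G}\mathrm{in}_I:\coprod_{I\subset G}\surgensub{I}\to \gensub{G}$, which expresses $\gensub{G}$ as the disjoint union of the sets $\surgensub{I}$ as $I$ ranges over subgroups of $G$. The image invariant $a\mapsto I_a$ is exactly the coordinate recording which summand $[a]$ belongs to, so the inverse of this bijection sends $[a:X\to G]$ to the class of $a:X\onto I_a$ in $\surgensub{I_a}$.

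Third, I would apply Theorem~\ref{thm classificationsurgsub} coordinatewise: for each subgroup $I\subset G$, the differential map $\mu:\surgensub{I}\to \mathrm{Quot}(H_{2\setminus 1}(I))$ is a bijection. Taking the disjoint union over $I$ gives a bijection $\coprod_{I\subset G}\surgensub{I}\to \coprod_{I\subset G}\mathrm{Quot}(H_{2\setminus 1}(I)) = \mathrm{In}(G)$, by Definition~\ref{def In(G)}.

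Finally, I would compose the inverse of the first bijection with this coordinatewise $\mu$ to conclude that the assignment $[a:X\to G]\mapsto (I_a,\mu(a:X\onto I_a))$ is a bijection between $\gensub{G}$ and $\mathrm{In}(G)$. There is no real obstacle here; the only point that requires a brief check is that the formula stated in the corollary agrees with the composition of the two bijections, which is immediate from their constructions.
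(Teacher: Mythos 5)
Your proposal is correct and follows essentially the same route as the paper: the corollary is obtained by combining the bijection $\coprod_{I\subset G}\text{\rm in}_I:\coprod_{I\subset G}\surgensub{I}\ra\gensub{G}$ with the coordinatewise application of Theorem~\ref{thm classificationsurgsub}, noting that $\text{\rm In}(G)=\coprod_{I\subset G}\text{\rm Quot}(H_{2\setminus 1}(I))$ by Definition~\ref{def In(G)}. The paper treats this as an immediate consequence of those two results, exactly as you do.
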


Since  $\cov{G}$ is a subcollection of $\gensub{G}$ and $\text{\rm In}(G)$ is finite,~\ref{cor classificationgsub}
implies:
\begin{cor}\label{cor covidemfinite}
For a finite group $G$, the collections $\gensub{G}$,  $\cov{G}$, and $\text{\rm Idem}(G)$  are finite sets.
\end{cor}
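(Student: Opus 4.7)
The plan is to deduce Corollary~\ref{cor covidemfinite} essentially by unpacking Corollary~\ref{cor classificationgsub} and then chaining the earlier identifications. The whole argument is short because the heavy lifting has already been done.

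First I would argue that the indexing set $\text{In}(G)$ from Definition~\ref{def In(G)} is finite when $G$ is finite. Since $G$ is finite it has only finitely many subgroups $I$. For each such $I$, the Schur multiplier $H_2(I)$ is finite (as recalled in Section~\ref{sec initialcover}), so its localization $H_{2\setminus 1}(I)$, being a quotient of $H_2(I)$, is a finite abelian group. A finite abelian group has only finitely many subgroups, and via the kernel map the set $\text{Quot}(H_{2\setminus 1}(I))$ is in bijection with the set of subgroups of $H_{2\setminus 1}(I)$, hence finite. Therefore $\text{In}(G) = \coprod_{I\subset G}\{I\}\times\text{Quot}(H_{2\setminus 1}(I))$ is a finite disjoint union of finite sets, so it is finite.

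Next I would invoke Corollary~\ref{cor classificationgsub}, which provides an explicit bijection $\gensub{G}\to \text{In}(G)$. Combined with the previous step this immediately gives finiteness of $\gensub{G}$. Since every cellular cover of $G$ is by definition a generalized subgroup of $G$, the inclusion $\cov{G}\subset \gensub{G}$ shows $\cov{G}$ is finite as well. Finally, Proposition~\ref{prop identifyingidemp} supplies a bijection between $\cov{G}$ and $\text{Idem}(G)$, so $\text{Idem}(G)$ is finite.

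There is no real obstacle here: all the substantive content was absorbed into Theorem~\ref{thm classificationsurgsub}, Corollary~\ref{cor classificationgsub}, and Proposition~\ref{prop identifyingidemp}. The only thing to check carefully is that $\text{Quot}(H_{2\setminus 1}(I))$ is finite, which reduces to the standard fact that a finite abelian group has finitely many subgroups (equivalently, finitely many quotients up to the equivalence in Definition~2 preceding \ref{thm classificationsurgsub}).
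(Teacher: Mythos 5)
Your proposal is correct and follows exactly the paper's route: the paper deduces the corollary from the bijection $\gensub{G}\cong\text{\rm In}(G)$ of Corollary~\ref{cor classificationgsub}, the finiteness of $\text{\rm In}(G)$ (which it leaves implicit and you usefully spell out), the inclusion $\cov{G}\subset\gensub{G}$, and the bijection of Proposition~\ref{prop identifyingidemp}. Nothing is missing.
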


%%%%%%%%%%%%%%%%%%%%%%
%%%%%%%%%%%%%%%%%%%%%%
%%%%%%%%%%%%%%%%%%%%%%
%section7
\section{Surjective cellular covers of finite groups}
%%%%%%%%%%%%%%%%%%%%%%
%%%%%%%%%%%%%%%%%%%%%%
%%%%%%%%%%%%%%%%%%%%%%
Recall that $G$ is assumed to be {\bf  finite}.
According to~\ref{cor classificationgsub}, a generalized subgroup  of  such a group  $G$ is determined  by two invariants: its image  and its
 differential.   For a given subgroup $I\subset G$ the differential classify all possible generalized subgroups of $G$ whose image is $I$, or equivalently generalized subgroup of $I$ which are represented by surjective homomorphisms.  Thus to classify cellular covers  we need to determined first the subgroups of $G$ which are images of cellular covers and then, for any such subgroup $I$, identify these surjective generalized subgroups of $I$ which
 are cellular covers of $G$.  Unfortunately we can not say much about the first step in this process. We do not know how to identify subgroups of $G$ which are images of
 cellular covers. However we can deal with the second step: the cellular covers of finite
  groups which are represented by surjective homomorphisms  in several important cases. This is the aim of this section.

%%%%%%%%%%%%%%%%%%%%%%%%%%%%%%%%%%%%%%%%%%%%%%%%%%%%%%%%%
%7.1
\begin{Def}
%%%%%%%%%%%%%%%%%%%%%%%%%%%%%%%%%%%%%%%%%%%%%%%%%%%%%%%
%\begin{enumerate}
%\item $\surgensub{G}$ denotes the collection of generalized subgroups of $G$ that are represented by  surjective homomorphisms.
%\item
$\surcov{G}$ denotes the collection of cellular covers of $G$ that are represented by
 surjective homomorphisms.
%\end{enumerate}
\end{Def}

\noindent
%%%%%%%%%%%%%%%%%%%%%%%%%%%%%%%%%%%%%%%%%
%
{\bf $H_{2\setminus 1}$ as a functor.}
%%%%%%%%%%%%%%%%%%%%%%%%%%%%%%%%%%%%%%%
To describe surjective covers we  use
 functorial properties of $H_{2\setminus 1}$ (see~\ref{def 2minus1}).
Any homomorphism $f:X\ra G$ induces a homomorphism
of the Schur multipliers $H_2(f):H_2(X)\ra H_2(G)$ (the Schur multiplier is a functor). In general this homomorphism $H_2(f)$
however does not induce a homomorphism between   $H_{2\setminus 1} (X)$ and  $H_{2\setminus 1}(G).$
For that we need an  additional  assumption on  $X$ and $ G$.
We need to  assume that both $X$ and $G$ are finite and
 that the set $S_{X}$ of primes that divide the order of $H_1(X)$ is a {\bf subset}  of the set $S_{G}$ of primes that
divide the order of  $H_1(G)$.  In this case there is a unique homomorphism:
\[H_{2\setminus 1}(f):H_{2\setminus 1}(X)=H_2(X)[S_X^{-1}]\ra H_2(G)[S_G^{-1}]=H_{2\setminus 1}(G)\]
 for which the following diagram commutes:
\[\xymatrix{
H_2(X)\rrto^(.4){\text{localization}} \dto_{H_2(f)}& & H_2(X)[S_X^{-1}]\ar@{=}[r] & H_{2\setminus 1}(X)\dto^{H_{2\setminus 1}(f)}\\
H_2(G)\rrto^(.4){\text{localization}} & & H_2(G)[S_G^{-1}]\ar@{=}[r] & H_{2\setminus 1}(G)
}\]
This is because $H_{2\setminus 1}(G)$ is uniquely divisible by the primes in $S_{X}$.
Observe further the uniqueness implies  $H_{2\setminus 1}(fg)=H_{2\setminus 1}(f)H_{2\setminus 1}(g)$, for any
two homomorphisms  $g:Y\ra X$ and $f:X\ra G$ for which the inclusions $S_Y\subset S_X\subset S_G$
of  sets of prime numbers that divide the corresponding orders of the abelianizations hold.

We conclude that $H_{2\setminus 1}$  is a functor on the full subcategory of finite groups
with a fixed isomorphism type of $H_1$.  For example
let  $X$ and $G$ be finite groups for which there is a surjective homomorphism $c:X\onto G$
which is a generalized subgroup. By~\ref{prop propgsub}(1),
$H_1(X)$ and $H_1(G)$ are isomorphic. Thus, for such groups $X$ and $G$, any homomorphism $f:X\ra G$ induces  $H_{2\setminus 1}(f):H_{2\setminus 1}(X) \ra H_{2\setminus 1}(G)$.

Since $G$ is finite, according to~\ref{prop propgsub}(4),
 a surjective generalized subgroup   $c:X\onto G$ induces an exact sequence:
 \[0\ra H_2(X)\xrightarrow{H_2(c)} H_2(G)\xrightarrow{\mu(c)} K_c\ra 0.\]
After localization with respect to the set $S$ of  primes that divide the order of $H_1(X)=H_1(G)$
(see~\ref{prop propgsub}(1)),
we get again an exact sequence:
 \[0\ra H_{2\setminus 1}(X)\xrightarrow{H_{2\setminus 1}(c)} H_{2\setminus 1}(G)\xrightarrow{\mu(c)} K_c\ra 0.\]
 Thus the kernel of the differential $\mu(c):H_{2\setminus 1}(G)\onto K_c$ is given by the
 image of $H_{2\setminus 1}(c):H_{2\setminus 1}(X)\subset H_{2\setminus 1}(G)$ which we simply denote by
 $H_{2\setminus 1}(X)$.

To enumerate the set   $\surcov{G}$  of  surjective  cellular covers of $G$ we need to understand for which  surjective generalized subgroups $c:X\onto G$
the function $\text{Hom}(X,c):\text{Hom}(X,X)\ra \text{Hom}(X,G)$ is a surjection. We start by
determining the image of $\text{Hom}(X,c)$. This image consists of   homomorphisms $f:X\ra G$ that can be lifted through $c$
and expressed as  compositions of some $s:X\ra X$ and $c:X\ra G$. The following proposition describes such homomorphisms:

\begin{prop}
Let $c:X\onto G$  be a surjective generalized subgroup.  A homomorphism $f:X\ra G$ can be expressed as a composition of $s:X\ra X$ and $c:X\onto G$ if and only if the image of $H_{2\setminus 1}(f)$
lies in the image $H_{2\setminus 1}(c)$.
\end{prop}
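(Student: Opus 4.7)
The forward implication is immediate by functoriality: if $f=cs$, then $H_{2\setminus 1}(f)=H_{2\setminus 1}(c)H_{2\setminus 1}(s)$, whose image is contained in $\mathrm{im}\bigl(H_{2\setminus 1}(c)\bigr)$. My plan for the converse is to show that the pulled-back central extension of $X$ splits, and then to obtain the desired lift by composing a section with the projection.

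First I would form the pull-back square
\[\xymatrix{
K_c\ar@{=}[d]\ar@{^(->}[r] & P\dto^{f'}\repi^{c'} & X\dto^{f}\\
K_c\ar@{^(->}[r] & X\repi^{c} & G
}\]
Since $c$ is a surjective generalized subgroup, its kernel $K_c$ is central in $X$ (Proposition~\ref{prop charcCmono}), and centrality is preserved by pull-back; hence $c':P\onto X$ is a central extension of $X$ by $K_c$. My goal becomes to prove that this extension splits; any section $\sigma:X\ra P$ yields $s:=f'\sigma:X\ra X$ with $cs = cf'\sigma = fc'\sigma = f$.

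To show splitting, I identify the class of $c'$ in $H^{2}(X,K_c)$. By Proposition~\ref{prop propgsub}(1) applied to $c$ we have $H_1(X)\cong H_1(G)$, and $K_c$, being a quotient of $H_{2\setminus 1}(G)$, has order coprime to $|H_1(G)|$, hence to $|H_1(X)|$. Consequently $\mathrm{Ext}^1\bigl(H_1(X),K_c\bigr)=0$, and the universal coefficient sequence yields an isomorphism $\mu:H^{2}(X,K_c)\xrightarrow{\cong}\mathrm{Hom}\bigl(H_2(X),K_c\bigr)$, exactly as in Proposition~\ref{prop propgsub}(3). By the naturality of the exact sequence of a central extension applied to the above square,
\[
\mu(c')=\mu(c)\circ H_2(f):H_2(X)\lra K_c.
\]
The key step is now to transfer this equality to the localized setting. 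Since $K_c$ is already $S$-local (where $S$ is the set of primes dividing $|H_1(G)|=|H_1(X)|$), the composite $\mu(c)\circ H_2(f)$ factors uniquely through the localizations as $\mu(c)\circ H_{2\setminus 1}(f):H_{2\setminus 1}(X)\ra H_{2\setminus 1}(G)\ra K_c$. But by hypothesis $\mathrm{im}\bigl(H_{2\setminus 1}(f)\bigr)\subset\mathrm{im}\bigl(H_{2\setminus 1}(c)\bigr)=\ker\bigl(\mu(c)\bigr)$, where the last equality uses the exact sequence displayed just before Definition~\ref{def In(G)}. Hence $\mu(c')=0$, so $c'$ represents the zero class in $H^{2}(X,K_c)$ and therefore splits, completing the proof.

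The main obstacle is the bookkeeping around the localization at $S$: one must verify that the universal coefficient isomorphism for $X$ really reduces the splitting question to the vanishing of $\mu(c)\circ H_2(f)$, and that this in turn is equivalent (not merely implied by) the stated containment of images in $H_{2\setminus 1}(G)$. Everything else is a direct application of naturality and the results already established in Sections~\ref{sec initialcover} and~\ref{sec gensuboffinite}.
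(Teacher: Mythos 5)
Your proof is correct, but it takes a genuinely different route from the paper's. The paper does not pull back $c$ along $f$; instead it invokes the push-out description of $c$ from Proposition~\ref{prop mupsiid}, lifts $f\pi$ through the initial cellular cover $e_G:E\onto G$ to get $\bar f:E\ra E$ (using Proposition~\ref{prop initialcov}(4)), observes via naturality of the differential that $\bar f$ restricted to $\ker(\pi)=H_{2\setminus 1}(X)$ is $H_{2\setminus 1}(f)$, and concludes from the hypothesis that $\bar f$ preserves $\ker(\pi)$ and therefore descends to the desired $s:X\ra X$. You instead work entirely downstairs: you form the pull-back extension $c':P\onto X$ and kill its obstruction class directly, using $\mathrm{Ext}^1(H_1(X),K_c)=0$ to reduce to $\mu(c')=\mu(c)\circ H_2(f)=0$. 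Each approach has its merits. The paper's argument recycles machinery already built (the universal object $E$ and the fact that it is a cellular cover, whose proof in \ref{prop initialcov} contains essentially the same splitting computation applied once and for all to $E$); yours is more self-contained and local, needing only Propositions~\ref{prop charcCmono} and~\ref{prop propgsub} plus naturality of the five-term sequence. Your version also makes the ``if and only if'' transparent: since $H_2(X)\ra H_{2\setminus 1}(X)$ is surjective, $\mu(c')=0$ is \emph{equivalent} to $\mathrm{im}\bigl(H_{2\setminus 1}(f)\bigr)\subset\ker\bigl(\mu(c)\bigr)=\mathrm{im}\bigl(H_{2\setminus 1}(c)\bigr)$, and the existence of $s$ with $cs=f$ is equivalent to $c'$ admitting a section (any such $s$ yields the section $x\mapsto(s(x),x)$), so the single pull-back diagram proves both implications. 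The only blemishes are cosmetic: the localized short exact sequence you cite appears in the section on surjective cellular covers rather than before Definition~\ref{def In(G)}, and you should note explicitly that the standing finiteness assumption on $G$ is what licenses the appeal to Proposition~\ref{prop propgsub}.
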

\begin{proof}
Clearly if $f=cs$ for some  $s:X\ra X$, then $H_{2\setminus 1}(f)=H_{2\setminus 1}(c)H_{2\setminus 1}(s)$ and hence the image of $H_{2\setminus 1}(f)$ is in the image of $H_{2\setminus 1}(c)$.

Assume that  $H_{2\setminus 1}(f)$ is in the image of $H_{2\setminus 1}(c)$.
Recall that  $c:X\onto G$ fits into the following commutative diagram where the left bottom
square is a push-out square and $e_G:E\onto G$ is   the initial extension (see~\ref{prop mupsiid}):
 \[\xymatrix{
H_{2\setminus 1}(X)\ar@{_(->}[d]_{H_{2\setminus 1}(c)}\ar@{=}[r]^{\text{id}}& H_{2\setminus 1}(X)\ar@{_(->}[d]\\
H_{2\setminus 1}(G)\ar@{^(->}[r]\depi_{\mu(c)} & E\repi^{e_G}\ar@{->>}[d]^{\pi} & G\ar@{=}[d]^{\text{id}}\\
K_c \ar@{^(->}[r] & X\repi^{c} & G
 }\]

To prove the lemma we will construct the following commutative diagram:

%%%%%%%%%%%
%%%%%%%%%%%%

%notechange
\[\xymatrix{
&{\text{Ker}(\pi)}\ar@{=}[r] \ar[d]_{f'} &H_{2\setminus 1}(X)\ar@{^(->}[r]\dto^{H_{2\setminus 1}(f)}
& E\repi^{\pi}\dto^{\bar f} & X\drto^{f}\dto^{s}\\
&H_{2\setminus 1}(X)\ar@{^(->}[r]^(0.52){H_{2\setminus 1}(c)}  &  H_{2\setminus 1}(G)\ar@{^(->}[r] & E\ar@/ _ 15pt/[rr]|{e_G}\repi^{\pi}
& X\repi^{c} & G
}\]
%%%%%%%%%%%
%%%%%%%%%%

Since  $e_G:E\onto G$ is a cellular cover (see~\ref{prop initialcov}(4)), there is a unique
$\bar f$ for which $e_G {\bar f}= f\pi$.  By the  naturality of the differentials we have a commutative square:
\[\xymatrix{
H_2(X)\ar@{->>}[r]^{\mu(\pi)}\dto_{H_2(f)} &  H_{2\setminus 1}(X)\dto^{\bar f}\\
H_2(G)\ar@{->>}[r]^{\mu(e_G)} &  H_{2\setminus 1}(G)
}
\]
This implies that  the restriction of $\bar f$ to $ H_{2\setminus 1}(X)$ is given
by $ H_{2\setminus 1}(f)$.  By the assumption the image of  $H_{2\setminus 1}(f)$ lies in the image
of  $H_{2\setminus 1}(c)$ which gives the homomorphism $f'$.
As $\bar f:E\ra E$ maps the kernel $\text{Ker}(\pi)\subset E$ to itself, we get the desired
homomorphism $s:X\ra X$.
\end{proof}

\begin{cor}\label{cor gscov}
A surjective generalized subgroup $c:X\onto G$ is a cellular cover if and only if, for any homomorphism
$f:X\ra G$, the image of $ H_{2\setminus 1}(f):H_{2\setminus 1}(X)\ra H_{2\setminus 1}(G)$ lies in
the image of $ H_{2\setminus 1}(c)$.
\end{cor}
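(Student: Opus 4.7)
The plan is to deduce the corollary essentially as an immediate consequence of the preceding proposition, since that proposition already characterizes which homomorphisms $f:X\ra G$ lie in the image of $\text{Hom}(X,c):\text{Hom}(X,X)\ra \text{Hom}(X,G)$.

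First, I would unwind the definition of a cellular cover. Being a cellular cover means $\text{Hom}(X,c)$ is a \emph{bijection}, whereas being a generalized subgroup (which is part of the hypothesis) means $\text{Hom}(X,c)$ is an \emph{injection}. Therefore, under the standing hypothesis that $c:X\onto G$ is a surjective generalized subgroup, the condition of being a cellular cover reduces to the surjectivity of $\text{Hom}(X,c)$: every homomorphism $f:X\ra G$ must factor as $f=cs$ for some $s:X\ra X$.

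Next, I would invoke the preceding proposition, which precisely identifies the image of $\text{Hom}(X,c)$: a homomorphism $f:X\ra G$ factors through $c$ if and only if the image of $H_{2\setminus 1}(f)$ is contained in the image of $H_{2\setminus 1}(c)$. Combining these two observations, the surjectivity of $\text{Hom}(X,c)$ is equivalent to requiring that \emph{for every} $f:X\ra G$, the image of $H_{2\setminus 1}(f)$ lies in the image of $H_{2\setminus 1}(c)$. This is exactly the statement of the corollary.

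Since both directions are already packaged in the preceding proposition, no real obstacle remains; the argument is purely formal, amounting to the observation that for a generalized subgroup the cellular-cover condition is equivalent to every $f$ lifting. The only small point worth noting in the write-up is that the functoriality of $H_{2\setminus 1}$ on the relevant subcategory is legitimate here, because $c$ being a surjective generalized subgroup yields $H_1(X)\cong H_1(G)$ by Proposition~\ref{prop propgsub}(1), so the sets of primes defining the localizations for $X$ and $G$ coincide and every $f:X\ra G$ indeed induces $H_{2\setminus 1}(f)$.
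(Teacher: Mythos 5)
Your proof is correct and matches the paper's intent exactly: the paper states this as an immediate corollary of the preceding proposition, and your argument (injectivity of $\text{Hom}(X,c)$ from the generalized-subgroup hypothesis plus the proposition's characterization of the image of $\text{Hom}(X,c)$) is precisely the implicit deduction. Your remark on why $H_{2\setminus 1}(f)$ is well-defined, via $H_1(X)\cong H_1(G)$ from Proposition~\ref{prop propgsub}(1), is a correct and welcome clarification.
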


We can use this corollary  to prove:
%%%%%%%%%%%%%%%%%%%%%%%%%%%%%%%%%%%%%%%%%%%%%%%%%%%%%%%%%%%%%%%%%%%%%%%%%%%%%%
\begin{prop}\label{prop cyclicH2}\hspace{1mm}
%%%%%%%%%%%%%%%%%%%%%%%%%%%%%%%%%%%%%%%%%%%%%%%%%%%%%%%%%%%%%%%%%%%%%%%%%%%%%%
\begin{enumerate}
\item Let $k$ be a positive divisor of the order of $H_{2\setminus 1}(G)$. If $c:X\onto G$ is a surjective generalized  subgroup whose differential is given by the quotient homomorphism $q_k:H_{2\setminus 1}(G)\ra H_{2\setminus 1}(G)/(k\text{-torsion})$, then $c$ is a cellular cover.
\item If $H_{2\setminus 1}(G)$ is cyclic, then all surjective generalized subgroups of $G$ are cellular covers.
\end{enumerate}
\end{prop}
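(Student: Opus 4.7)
The plan is to reduce both parts to Corollary~\ref{cor gscov}, which says a surjective generalized subgroup $c:X\onto G$ is a cellular cover iff, for every homomorphism $f:X\ra G$, the image of $H_{2\setminus 1}(f)$ lies inside the image of $H_{2\setminus 1}(c)$.

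For part (1), first I would identify the image of $H_{2\setminus 1}(c)$. Recall from the discussion preceding~\ref{cor gscov} that this image is precisely the kernel of the differential $\mu(c)$. Since $\mu(c)=q_k$ is the quotient by the $k$-torsion, the image of $H_{2\setminus 1}(c)$ coincides with the $k$-torsion subgroup of $H_{2\setminus 1}(G)$. In particular, $H_{2\setminus 1}(X)$ is mapped isomorphically onto this $k$-torsion, so every element of $H_{2\setminus 1}(X)$ is annihilated by $k$. Since any group homomorphism carries $k$-torsion to $k$-torsion, for any $f:X\ra G$ the image of $H_{2\setminus 1}(f)$ must land inside the $k$-torsion subgroup of $H_{2\setminus 1}(G)$, i.e.\ inside the image of $H_{2\setminus 1}(c)$. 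The hypothesis of~\ref{cor gscov} is then verified, proving (1).

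For part (2), the plan is to show that when $H_{2\setminus 1}(G)$ is cyclic, every element of $\text{Quot}(H_{2\setminus 1}(G))$ is represented by a quotient map of the form $q_k$, at which point (1) applies. By~\ref{thm classificationsurgsub}, a surjective generalized subgroup $c:X\onto G$ is classified by its differential $\mu(c)\in\text{Quot}(H_{2\setminus 1}(G))$, i.e.\ by a subgroup of $H_{2\setminus 1}(G)$. When $H_{2\setminus 1}(G)$ is finite cyclic of order $n$, every subgroup is the unique subgroup of some order $d\mid n$, and that subgroup coincides with the $d$-torsion subgroup of $H_{2\setminus 1}(G)$. Thus $\ker(\mu(c))$ is a $k$-torsion subgroup for some $k$, and $\mu(c)$ is equivalent to $q_k$; then part (1) finishes the argument.

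I do not anticipate a hard step: both parts are essentially bookkeeping, once one observes that a homomorphism out of a $k$-torsion group lands in the $k$-torsion of the target and that subgroups of a cyclic group are exactly the torsion subgroups. The only place where care is needed is the identification of the image of $H_{2\setminus 1}(c)$ with $\ker(\mu(c))$, which was set up explicitly in the paragraph preceding~\ref{cor gscov} using the exact sequence $0\ra H_{2\setminus 1}(X)\ra H_{2\setminus 1}(G)\ra K_c\ra 0$ obtained by localizing~\ref{prop propgsub}(4).
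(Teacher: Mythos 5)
Your proposal is correct and follows essentially the same route as the paper: part (1) is proved by observing that $H_{2\setminus 1}(X)$ is the $k$-torsion subgroup of $H_{2\setminus 1}(G)$ (via the localized exact sequence), so any $H_{2\setminus 1}(f)$ lands in the $k$-torsion, which is the image of $H_{2\setminus 1}(c)$, and then Corollary~\ref{cor gscov} applies; part (2) reduces to (1) since every subgroup of a finite cyclic group is a $k$-torsion subgroup. No substantive differences from the paper's argument.
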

\begin{proof}
\noindent
(1):\quad
Recall that we have an exact sequence:
\[
0\ra  H_{2\setminus 1}(X)\xrightarrow{ H_{2\setminus 1}(c)}  H_{2\setminus 1}(G)\xrightarrow{\mu(c)} H_{2\setminus 1}(G)/(k\text{-torsion}) \ra 0.
\]
Thus,  $H_{2\setminus 1}(X)$ is the $k$-torsion subgroup of $ H_{2\setminus 1}(G)$.
Let $f:X\ra G$ be a homomorphism. Since $H_{2\setminus 1}(X)$ is $k$-torsion, the image of
$H_{2\setminus 1}(f):H_{2\setminus 1}(X)\ra  H_{2\setminus 1}(G)$  lies in the $k$-torsion subgroup of
$H_{2\setminus 1}(G)$ which is the image of $H_{2\setminus 1}(c)$.
According to~\ref{cor gscov} $a$ is a cellular cover.
\smallskip

\noindent
(2):\quad If $H_{2\setminus 1}(G)$ is cyclic, then any of  its subgroups is  the $k$-torsion subgroup for some $k$.
Statement (1) follows then from statement (2).
\end{proof}
\smallskip

\noindent
{\bf The action of $\text{Out}(G)$.}
According to~\ref{thm classificationsurgsub} a surjective generalized subgroup $c:X\onto G$
is  determined  by  its  differential $\mu(c):H_{2\setminus 1}(G)\onto K_c$  which in turn is determined by its kernel
$H_{2\setminus 1}(X)$.  The  following functions  are bijections:

\[\xymatrix@R-20pt{
\surgensub{G}\rto  & \text{Quot}(H_{2\setminus 1}(G)) \rto &  \gensub{H_{2\setminus 1}(G)}; \\
{(c:X\onto G)}\ar@{|->}[r]&  {[\mu(c):H_{2\setminus 1}(G)\onto K_c]}\ar@{|->}[r] &H_{2\setminus 1}(X).
 }\]
 In this way  surjective generalized subgroups of $G$ are enumerated by  subgroups of $H_{2\setminus 1}(G)$.
To enumerate the set $\surcov{G}$,
we need to identify these elements  in $\text{Quot}(H_{2\setminus 1}(G)) $, or equivalently in $\gensub{H_{2\setminus 1}(G)} $, which are differentials of   surjective cellular covers.
For that we look at the action of   $\text{Out}(G)$ on
these sets. Let $h:G\ra G$ be an automorphism.  Consider the induced isomorphism
$H_2(h):H_2(G)\ra H_2(G)$ and its localization $H_{2\setminus 1}(h):H_{2\setminus 1}(G)\ra H_{2\setminus 1}(G)$ with respect to the set $S$ of primes that divide the order of
$H_1(G)$.  The function:
\[\xymatrix{ \gensub{H_{2\setminus 1}(G)}\times \text{Aut}(G)\ni (H, h)\ar@{|->}[rr] &&  H_{2\setminus 1}(h)^{-1}(H)\in \gensub{H_{2\setminus 1}(G)}}\]
defines a right action of  $\text{Aut}(G)$ on the set $\gensub{H_{2\setminus 1}(G)}$ of all subgroups
of $H_{2\setminus 1}(G)$.
 Since inner automorphisms induce the identity
on homology, this action induces an action of $\text{Out}(G)$ on $\gensub{H_{2\setminus 1}(G)}$.

The corresponding right action of $\text{Out}(G)$ on $\text{Quot}(H_{2\setminus 1}(G))$
can be described as follows.  Let $h: G\ra G$ be an automorphism. For a surjective homomorphism $\sigma:H_{2\setminus 1}(G)\onto K$, the composition
$\sigma H_{2\setminus 1}(h):H_{2\setminus 1}(G)\ra K$ is also surjective. Note further if  $\sigma:H_{2\setminus 1}(G)\onto K$
and $\tau:H_{2\setminus 1}(G)\onto K$ define the same element in  $\text{Quot}(H_{2\setminus 1}(G))$,
then so do their compositions $\sigma H_{2\setminus 1}(h)$ and $\tau H_{2\setminus 1}(h)$.
The following induced function defines a right action of $\text{Out}(G)$ on
$\text{Quot}(H_{2\setminus 1}(G))$:
\[ \xymatrix{
\text{Quot}(H_{2\setminus 1}(G))\times \text{Out}(G)\ni ([\sigma], [h])\ar@{|->}[rr] && \left[\sigma H_{2\setminus 1}(h)\right]\in \text{Quot}(H_{2\setminus 1}(G))}\]
Moreover the bijection that assigns to an element $\sigma$ in $\text{Quot}(H_{2\setminus 1}(G))$ its kernel $\text{Ker}(\sigma)$, which is an element in $\gensub{H_{2\setminus 1}(G)}$, is an equivariant isomorphism.

We will be interested in the fixed points of these actions.
The reason for this is:
\begin{prop}\label{prop diffcovfix}
If $c:X\onto G$ is a  surjective cellular cover, then its
differential $[\mu(c):H_{2\setminus 1}(G)\onto K_c]\in
\text{\rm Quot}(H_{2\setminus 1}(G))$  and $H_{2\setminus 1}(X)\in \gensub{H_{2\setminus 1}(G)}$
are fixed by the action of $\text{\rm Out}(G)$.
\end{prop}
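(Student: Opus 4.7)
The plan is to use the cellular cover property to lift each automorphism $h \in \Aut(G)$ through $c$, obtaining an automorphism of $X$ that covers $h$, and then deduce both fixed-point statements by naturality of $H_{2\setminus 1}$ and of the differential.

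First, fix $h:G\ra G$ an automorphism. Since $c:X\onto G$ is a cellular cover, the map $\Hom(X,c):\Hom(X,X)\ra \Hom(X,G)$ is a bijection. Thus there is a unique $s:X\ra X$ such that $cs=hc$, and a unique $s':X\ra X$ such that $cs'=h^{-1}c$. Then $c(ss')=hh^{-1}c=c$, so $ss'$ and $\text{id}_X$ both lift $c$; by uniqueness $ss'=\text{id}_X$, and similarly $s's=\text{id}_X$. Hence $s:X\ra X$ is an automorphism making the square with $c,h$ commute. (This is the standard lifting-of-automorphisms trick for cellular covers.)

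Applying $H_{2\setminus 1}$ to this commutative square, which is legitimate because $c$ is a surjective generalized subgroup and so $H_1(X)\cong H_1(G)$ by~\ref{prop propgsub}(1) (so $H_{2\setminus 1}$ is functorial on the groups involved), yields a commutative square whose rows are the inclusion $H_{2\setminus 1}(c):H_{2\setminus 1}(X)\hookrightarrow H_{2\setminus 1}(G)$ and whose vertical maps $H_{2\setminus 1}(s)$, $H_{2\setminus 1}(h)$ are isomorphisms. Consequently $H_{2\setminus 1}(h)$ carries the subgroup $H_{2\setminus 1}(X)\subset H_{2\setminus 1}(G)$ isomorphically onto itself, so $H_{2\setminus 1}(h)^{-1}(H_{2\setminus 1}(X))=H_{2\setminus 1}(X)$. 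This is exactly the statement that $H_{2\setminus 1}(X)$ is fixed by the action of $[h]\in\text{Out}(G)$.

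For the differential, combine this with the exact sequence in~\ref{prop propgsub}(4), localized at $S$, and its naturality (which follows from the functoriality of the central-extension exact sequence recalled in Section~\ref{sec initialcover}): we obtain a commutative ladder
\[
\xymatrix{
0\rto & H_{2\setminus 1}(X)\rto\dto_{H_{2\setminus 1}(s)}^{\cong} & H_{2\setminus 1}(G)\rto^(.58){\mu(c)}\dto_{H_{2\setminus 1}(h)}^{\cong} & K_c\rto\dto^{\bar h} & 0\\
0\rto & H_{2\setminus 1}(X)\rto & H_{2\setminus 1}(G)\rto^(.58){\mu(c)} & K_c\rto & 0
}
\]
in which the induced map $\bar h:K_c\ra K_c$ is an isomorphism by the five lemma. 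Commutativity of the right-hand square gives $\bar h\circ \mu(c)=\mu(c)\circ H_{2\setminus 1}(h)$, i.e.\ $\mu(c)$ and $\mu(c)\circ H_{2\setminus 1}(h)$ differ by the isomorphism $\bar h$ of their common codomain, so they represent the same element of $\text{Quot}(H_{2\setminus 1}(G))$. This is exactly the statement that $[\mu(c)]$ is fixed by $[h]$. Since inner automorphisms act trivially on homology, both fixed-point statements descend from $\Aut(G)$ to $\text{Out}(G)$.

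The main (and really only) obstacle is the initial lifting step: one must verify that the lift $s$ of $hc$ is actually an isomorphism, not merely a homomorphism; the uniqueness half of the cellular-cover property handles this cleanly via the two-sided inverse argument above. Everything else is naturality and the five lemma.
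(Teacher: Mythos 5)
Your proposal is correct and follows essentially the same route as the paper: lift the automorphism $h$ through the cellular cover $c$ using the bijectivity of $\Hom(X,c)$, then apply naturality of the differential to the resulting commutative square to see that $H_{2\setminus 1}(h)$ preserves $\ker\mu(c)=H_{2\setminus 1}(X)$ and that $\mu(c)$ and $\mu(c)\circ H_{2\setminus 1}(h)$ differ by an isomorphism of $K_c$. The paper's version is merely terser (it does not spell out that the lift is an automorphism or invoke the five lemma), so your extra care is harmless and adds nothing structurally different.
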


\begin{proof}
Let $h:G\ra G$ be an automorphism. Since $c:X\onto G$ is a cellular cover, there is a unique
homomorphism $h':X\ra X$ that fits into the following commutative square:
\[\xymatrix{
X\rto^{h'}\depi_{c} & X\depi^{c}\\
G\rto^{h} & G
}\]
By the naturality of the differential we then get an induced commutative square:
\[\xymatrix{
H_{2\setminus 1}(G)\depi_{\mu(c)}\rto^{H_{2\setminus 1}(h)} & H_{2\setminus 1}(G)\depi^{\mu(c)}\\
K_c\rto^{h'} & K_c
}\]
It follows that $H_{2\setminus 1}(h)$ maps the kernel  of $\mu(c)$ to
itself.
This means that, as an element  of $\gensub{H_{2\setminus 1}(G)}$, this kernel
is invariant under  the action of $\text{\rm Out}(G)$.
\end{proof}

\begin{cor}\label{cor cyclictrivialaction}\hspace{2mm}
\begin{enumerate}
\item Let $k>0$ be a divisor of the exponent of $H_{2\setminus 1}(G)$. Then
the $k$-torsion subgroup of $H_{2\setminus 1}(G)$ is fixed by $\text{\rm Out}(G)$.
\item If $H_{2\setminus 1}(G)$ is cyclic, then  the action of  $\text{\rm Out}(G)$ on  the sets $\text{\rm Sub}(H_{2\setminus 1}(G))$ and $\text{\rm Quot}(H_{2\setminus 1}(G))$ is trivial.
\end{enumerate}
\end{cor}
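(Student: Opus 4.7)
The plan is to reduce both parts to the observation that the action of $\text{Out}(G)$ on $\gensub{H_{2\setminus 1}(G)}$ factors through automorphisms of the abelian group $H_{2\setminus 1}(G)$, and then exploit purely group-theoretic properties of the relevant subgroups.

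For part (1), I would argue that the $k$-torsion subgroup of any abelian group $A$ is a characteristic subgroup, i.e., preserved by every automorphism of $A$. Indeed, if $\alpha:A\to A$ is an automorphism and $ka=0$, then $k\alpha(a)=\alpha(ka)=0$, so $\alpha$ sends the $k$-torsion into itself; the same applied to $\alpha^{-1}$ gives equality. Now by definition the action of $[h]\in\text{Out}(G)$ on $H\in\gensub{H_{2\setminus 1}(G)}$ is $H\mapsto H_{2\setminus 1}(h)^{-1}(H)$, where $H_{2\setminus 1}(h)$ is an automorphism of $H_{2\setminus 1}(G)$ (because $h$ is). Taking $H$ to be the $k$-torsion subgroup, the characteristic property gives $H_{2\setminus 1}(h)^{-1}(H)=H$, so $H$ is fixed.

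For part (2), suppose $H_{2\setminus 1}(G)$ is cyclic. Since $G$ is finite, $H_{2\setminus 1}(G)$ is a finite cyclic group, and a finite cyclic group of order $n$ has exactly one subgroup of each order $d\mid n$, namely the $d$-torsion subgroup. Thus every element of $\gensub{H_{2\setminus 1}(G)}$ is the $k$-torsion subgroup for some $k\mid n$, and part (1) shows each one is fixed by $\text{Out}(G)$. Hence the action on $\gensub{H_{2\setminus 1}(G)}$ is trivial. Finally, the bijection $\text{Quot}(H_{2\setminus 1}(G))\to \gensub{H_{2\setminus 1}(G)}$ sending $\sigma\mapsto\ker(\sigma)$ was noted in the excerpt to be $\text{Out}(G)$-equivariant, so the action on $\text{Quot}(H_{2\setminus 1}(G))$ is also trivial.

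There is no real obstacle here; both statements are essentially consequences of the fact that torsion subgroups are characteristic, combined with the structure of subgroups of finite cyclic groups. The only thing to be careful about is that the action is defined as a preimage rather than an image, but since we are dealing with automorphisms of an abelian group both descriptions agree on characteristic subgroups, so this does not cause trouble.
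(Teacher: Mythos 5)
Your proof is correct. The paper states this corollary without any proof at all, and your argument --- the $k$-torsion subgroup of an abelian group is characteristic, every subgroup of a finite cyclic group is the $d$-torsion subgroup for some $d$, and the kernel bijection $\text{Quot}(H_{2\setminus 1}(G))\to\text{Sub}(H_{2\setminus 1}(G))$ is $\text{Out}(G)$-equivariant --- is precisely the justification the authors leave implicit, including the correct handling of the action being defined via preimages.
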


Let $\invquot{H_{2\setminus 1}(G)}\subset \text{\rm Quot}(H_{2\setminus 1}(G))$ and
$\invsub{H_{2\setminus 1}(G)}\subset \gensub{H_{2\setminus 1}(G)} $  be the fixed points of the
action of $\text{Out}(G)$.
According to what has been  proven,  we have the following sequence of inclusions:

\[\xymatrix@R-10pt{
k \ar@{|->}[dd] & &*=<0pt, 20pt>{\{k\in{\mathbf Z}\ |\ k>0\text{ and } k\text{ divides the order of  }H_{2\setminus 1}(G)\}}
\ar@{_(->}[d]
\\
&& \surcov{G} \ar@{_(->}@/_10pt/[dl]_{\mu}\ar@{^(->}@/^10pt/[dr] \\
q_k &\invquot{H_{2\setminus 1}(G)}\ar@{^(->}@/_10pt/[dr] & & \surgensub{G}
\ar@{=}@/^10pt/[dl]_(.4){\mu}\\
& &  \text{\rm Quot}(H_{2\setminus 1}(G))
}\]
%%%%%%%%%%%%%%%%%%%%%%%%%%%%%%%%
%%%%%%%%%%%%

Proposition~\ref{prop cyclicH2} can be rephrased as:
\begin{cor}\label{cor sumcyclicH2min1}
Let $G$ be a group for which $H_{2\setminus 1}(G)$ is cyclic.
Then all the inclusions in the above diagram are bijections. In particular:
\begin{enumerate}
\item The differential $\mu:\surcov{G}\ra \text{\rm Quot}(H_{2\setminus 1}(G))$ is a bijection.
\item Any surjective generalized subgroup of $G$ is a cellular cover.
\item Let $c:X\onto G$ be a surjection. Then $\text{\rm Hom}(X,c):\text{\rm Hom}(X,X)\ra \text{\rm Hom}(X,G)$
is a bijection if and only if it is an injection.
\end{enumerate}
\end{cor}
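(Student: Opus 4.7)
The plan is to verify that, under the cyclicity assumption, each of the three inclusions in the displayed diagram collapses to an equality, and then read off the three itemized consequences.

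First, I would handle the right-hand side of the diagram. By Corollary~\ref{cor cyclictrivialaction}(2), the cyclicity of $H_{2\setminus 1}(G)$ forces the action of $\text{\rm Out}(G)$ on $\text{\rm Quot}(H_{2\setminus 1}(G))$ (and equivalently on $\gensub{H_{2\setminus 1}(G)}$) to be trivial, so every element is fixed. Hence $\invquot{H_{2\setminus 1}(G)}=\text{\rm Quot}(H_{2\setminus 1}(G))$, which makes the inclusion $\invquot{H_{2\setminus 1}(G)}\hookrightarrow\text{\rm Quot}(H_{2\setminus 1}(G))$ a bijection. Combined with Theorem~\ref{thm classificationsurgsub}, this shows $\surgensub{G}\cong\invquot{H_{2\setminus 1}(G)}$ via $\mu$.

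Next, I would use Proposition~\ref{prop cyclicH2}(2): when $H_{2\setminus 1}(G)$ is cyclic, every surjective generalized subgroup of $G$ is already a cellular cover, so $\surcov{G}=\surgensub{G}$. This collapses the two middle inclusions $\surcov{G}\hookrightarrow\invquot{H_{2\setminus 1}(G)}$ and $\surcov{G}\hookrightarrow\surgensub{G}$ to bijections, the first via $\mu$ and the second tautologically. For the left-most inclusion, I would observe that in any finite cyclic group every subgroup is the $k$-torsion subgroup for some positive divisor $k$ of the order, and that the equivalence class of the quotient map $q_k:H_{2\setminus 1}(G)\onto H_{2\setminus 1}(G)/(k\text{-torsion})$ determines and is determined by this subgroup; hence $k\mapsto q_k$ is a bijection from positive divisors of $|H_{2\setminus 1}(G)|$ onto $\text{\rm Quot}(H_{2\setminus 1}(G))$.

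Finally, I would deduce the three numbered statements. Item~(1) is immediate from the chain of bijections $\surcov{G}=\surgensub{G}\xrightarrow{\mu}\text{\rm Quot}(H_{2\setminus 1}(G))$. Item~(2) is a direct restatement of Proposition~\ref{prop cyclicH2}(2). For item~(3), I would note that $\text{\rm Hom}(X,c)$ being an injection says precisely that $c:X\onto G$ is a surjective generalized subgroup (Definition~\ref{def gensubgroups}), while it being a bijection says precisely that $c$ is a surjective cellular cover; under the cyclicity hypothesis these two conditions coincide by item~(2). I do not anticipate any substantive obstacle; the whole argument is a bookkeeping exercise assembling Theorem~\ref{thm classificationsurgsub}, Proposition~\ref{prop cyclicH2}, and Corollary~\ref{cor cyclictrivialaction}, so the only thing to be careful about is that the enumeration of $\text{\rm Quot}(H_{2\setminus 1}(G))$ by divisors via $k\mapsto q_k$ is compatible with the bijection to subgroups of $H_{2\setminus 1}(G)$ used in the earlier discussion.
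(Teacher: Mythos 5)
Your proposal is correct and follows the same route as the paper, which states this corollary as a direct rephrasing of Proposition~\ref{prop cyclicH2} combined with Theorem~\ref{thm classificationsurgsub} and Corollary~\ref{cor cyclictrivialaction}; your write-up simply makes the bookkeeping explicit. The only implicit point worth noting is that $G$ is still assumed finite throughout this section, which is what guarantees $H_{2\setminus 1}(G)$ is a finite cyclic group and makes the enumeration of its subgroups by divisors valid.
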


%%%%%%%%%%%%%%%%%%%%%%%%%%%%%%%%%%%%%%%%%%%%%%%%%%%%%
%%%%%%%%%%%%%%%%%%%%%%%%%%%%%%%%%%%%%%%%%%%%%%%%%%%%%
%%%%%%%%%%%%%%%%%%%%%%%%%%%%%%%%%%%%%%%%%%%%%%%%%%%%%
%section8
\section{Cellular covers of finite simple groups}
%%%%%%%%%%%%%%%%%%%%%%%%%%%%%%%%%%%%%%%%%%%%%%%%%%%%%%
%%%%%%%%%%%%%%%%%%%%%%%%%%%%%%%%%%%%%%%%%%%%%%%%%%%%%%
%%%%%%%%%%%%%%%%%%%%%%%%%%%%%%%%%%%%%%%%%%%%%%%%%%%%%%
The aim of this section is to classify cellular covers of finite simple groups.
A simple group $G$ has a trivial abelianization and thus  $H_{2\setminus 1}(G)=H_{2}(G)$.
According  to~\ref{prop diffcovfix}  the differential  induces an inclusion
$\mu:\surcov{G}\subset \invquot{H_{2}(G)}$.
Our key result is:

%%%%%%%%%%%%%%%%%%%%%%%%%%%%%%%%%%%%%%%%%%%%%%%%%%%%
%8.1
\begin{thm}\label{thm surcovfs}
%%%%%%%%%%%%%%%%%%%%%%%%%%%%%%%%%%%%%%%%%%%%%%%%%%%%%
If  $G$ is a  finite  simple group, then  $\mu:\surcov{G}\subset \invquot{H_{2}(G)}$ is a bijection.
\end{thm}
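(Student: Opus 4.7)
The injectivity of $\mu\colon\surcov{G}\to\invquot{H_2(G)}$ is essentially free: $\surcov{G}\subseteq\surgensub{G}$, and by Theorem~\ref{thm classificationsurgsub} the differential already bijects $\surgensub{G}$ with $\text{Quot}(H_{2\setminus 1}(G))=\text{Quot}(H_2(G))$ (we use that $G$ is perfect, so $H_{2\setminus 1}(G)=H_2(G)$). The whole content of the theorem is therefore the \emph{surjectivity} of $\mu$: I must show that every $\text{Out}(G)$-invariant class $[\sigma]\in\text{Quot}(H_2(G))$ is the differential of some surjective cellular cover.

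Given such a $\sigma\colon H_2(G)\onto K$, I apply the construction $\Psi$ from Section~\ref{sec gensuboffinite} to obtain the unique surjective generalized subgroup $c\colon X\onto G$ with $\mu(c)=\sigma$, sitting in the push-out square of Proposition~\ref{prop mupsiid}. The job is to upgrade $c$ from a generalized subgroup to a cellular cover. By Corollary~\ref{cor gscov}, this amounts to verifying, for every homomorphism $f\colon X\to G$, that
\[
\mathrm{im}\bigl(H_2(f)\bigr)\ \subseteq\ \mathrm{im}\bigl(H_2(c)\bigr)\ =\ H_2(X)\ =\ \ker(\sigma)\ \subseteq\ H_2(G).
\]

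Here is where simplicity enters. If $G$ is cyclic of prime order, then $H_2(G)=0$ and there is nothing to check, so I may assume $G$ is non-abelian simple, so $Z(G)=1$. For any $f\colon X\to G$, simplicity forces $f$ to be either trivial (done immediately) or surjective. In the surjective case, note that the central subgroup $K_c=\ker(c)$ of $X$ maps under $f$ into $Z(f(X))=Z(G)=1$, so $K_c\subseteq\ker(f)$. Consequently $f$ factors uniquely as $f=g\circ c$ for some $g\colon G\to G$; as $f$ is surjective this $g$ is surjective, and since $G$ is finite $g\in\text{Aut}(G)$.

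The invariance hypothesis is now ready to be cashed in. Functoriality gives $H_2(f)=H_2(g)\circ H_2(c)$, so $\mathrm{im}(H_2(f))=H_2(g)(H_2(X))$. By Proposition~\ref{prop diffcovfix}'s description of the $\text{Out}(G)$-action on $\gensub{H_2(G)}$, the hypothesis that $[\sigma]$ is fixed by $\text{Out}(G)$ is exactly the statement that $H_2(g)$ preserves the subgroup $\ker(\sigma)=H_2(X)$ for every automorphism $g$ of $G$. Hence $\mathrm{im}(H_2(f))=H_2(g)(H_2(X))=H_2(X)$, and Corollary~\ref{cor gscov} concludes that $c$ is a cellular cover, with $\mu(c)=\sigma$ as required. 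The main obstacle, and the one point where genuine ingredients combine, is precisely the previous step: converting the abstract $\text{Out}(G)$-invariance of the differential into the concrete lifting condition of Corollary~\ref{cor gscov}, which works because simplicity of $G$ together with centrality of $K_c$ forces every endomorphism $f\colon X\to G$ to factor through $c$ up to an element of $\text{Aut}(G)$.
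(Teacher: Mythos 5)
Your overall architecture is the same as the paper's: reduce to the lifting criterion of Corollary~\ref{cor gscov}, show that every non-trivial $f\colon X\ra G$ factors as $c$ followed by an automorphism of $G$, and then cash in the $\text{Out}(G)$-invariance of $\ker\mu(c)=H_2(X)$. The invariance step and the use of \ref{thm classificationsurgsub} for injectivity are fine. But there is one step that is not justified as written: the claim that ``simplicity forces $f$ to be either trivial or surjective.'' Simplicity of the \emph{target} gives no such dichotomy — the image $f(X)$ is merely a subgroup of $G$, not a normal one, and a homomorphism into a simple group can perfectly well have a proper non-trivial image. What makes the dichotomy true here is a property of the \emph{source}: $X$ is quasisimple ($H_1(X)\cong H_1(G)=0$ by \ref{prop propgsub}(1), $K_c$ is central, $X/K_c\cong G$ is simple), so every proper normal subgroup of $X$ is contained in $K_c$. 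Concretely: $\ker(f)\trianglelefteq X$, hence $c(\ker f)\trianglelefteq G$ is trivial or all of $G$; in the latter case \ref{prop propgsub}(5) forces $\ker f=X$, i.e.\ $f$ trivial; in the former case $\ker f\subseteq K_c$, so $|f(X)|\ge |X|/|K_c|=|G|$ and $f$ is onto. This missing argument is precisely the content of the paper's Lemma~\ref{lem basicsimplgr}, which it proves by applying the normality/simplicity dichotomy to $c(\ker f)$ rather than to $f(X)$.

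Once that dichotomy is supplied, the rest of your argument closes correctly, and in the surjective case your route is a slight (and clean) variant of the paper's: you observe $f(K_c)\subseteq Z(f(X))=Z(G)=1$, so $K_c\subseteq\ker f$ and $f$ factors through $c$ directly, whereas the paper gets $\ker f\subseteq K_c$ and then uses finiteness to see that the induced maps $X/\ker f\onto G$ and $X/\ker f\hookrightarrow G$ are isomorphisms. Both land at $f=h\circ c$ with $h\in\text{Aut}(G)$, after which $H_2(f)=H_2(h)H_2(c)$ and the invariance of $H_2(X)\in\gensub{H_2(G)}$ under $H_2(h)$ give $\mathrm{im}\,H_2(f)=\mathrm{im}\,H_2(c)$, as required by \ref{cor gscov}. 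So: right strategy, one genuine hole at the trivial-or-surjective step, fixable by invoking \ref{prop propgsub}(5) (or by reproducing the paper's Lemma~\ref{lem basicsimplgr}).
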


%%%%%%%%%%%%%%%%%%%%%%%%%%%%%%%%%%%%%%%%%%%%%%%%%%%%%%
%8.2
\begin{cor}\label{cor covsimple}
%%%%%%%%%%%%%%%%%%%%%%%%%%%%%%%%%%%%%%%%%%%%%%%%%%%%%%%%%
Let $G$ be a finite simple group. Then
the sets $\cov{G}$ and  $\text{\rm Idem}(G)$  are in bijection with
$\{0\}\coprod \text{\rm Inv}\gensub{H_{2}(G)}$.
\end{cor}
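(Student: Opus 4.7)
The plan is to combine Proposition~\ref{prop identifyingidemp}, which identifies $\text{\rm Idem}(G)$ with $\cov{G}$, with Theorem~\ref{thm surcovfs} and an analysis of the possible images of cellular covers. First I would observe that the image $c(A)$ of any cellular cover $c:A\to G$ is a normal subgroup of $G$: given $g\in G$, bijectivity of $\text{\rm Hom}(A,c)$ applied to $\mathrm{inn}_g\circ c:A\to G$ produces a lift $\tilde c_g:A\to A$ with $c\,\tilde c_g=\mathrm{inn}_g\circ c$, so $g\,c(A)\,g^{-1}\subseteq c(A)$ for every $g$. Since $G$ is simple, $c(A)$ is either trivial or equal to $G$, which splits $\cov{G}$ into two pieces that I will analyze separately.

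If $c(A)=\{e\}$, then $c$ is the trivial homomorphism, so $\text{\rm Hom}(A,G)$ is a singleton; the cellular cover bijection $\text{\rm Hom}(A,A)\cong\text{\rm Hom}(A,G)$ then forces $\text{\rm Hom}(A,A)$ to be a singleton as well, which happens only when $A$ is trivial (otherwise the identity and the trivial endomorphism of $A$ would be distinct). Thus there is a unique equivalence class of cellular covers with trivial image, which I send to the element $0$. If $c(A)=G$, then the cover is surjective and represents an element of $\surcov{G}$. Because $G$ is simple one has $H_{2\setminus 1}(G)=H_2(G)$ (for nonabelian $G$, $H_1(G)=0$; for $G\cong\mathbf{Z}/p$, $H_2(G)=0$, so the localization is an isomorphism either way), and Theorem~\ref{thm surcovfs} supplies a bijection $\mu:\surcov{G}\xrightarrow{\sim}\invquot{H_2(G)}$.

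To finish, the $\text{\rm Aut}(G)$-equivariant identification $\text{\rm Quot}(H_2(G))\cong\gensub{H_2(G)}$ via $\sigma\mapsto\text{\rm Ker}(\sigma)$, recorded just before Proposition~\ref{prop diffcovfix}, restricts on fixed points to a bijection $\invquot{H_2(G)}\cong\text{\rm Inv}\gensub{H_2(G)}$. Assembling the two cases and transporting through Proposition~\ref{prop identifyingidemp} yields the stated description for both $\cov{G}$ and $\text{\rm Idem}(G)$. The main conceptual step is the normality observation that partitions $\cov{G}$ by image; once that is established, everything else is bookkeeping built on top of Theorem~\ref{thm surcovfs} and the earlier enumeration results, so no serious obstacle is anticipated.
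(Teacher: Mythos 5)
Your proposal is correct and follows essentially the same route as the paper: identify $\text{\rm Idem}(G)$ with $\cov{G}$ via Proposition~\ref{prop identifyingidemp}, split $\cov{G}$ by the image of the cover (trivial versus all of $G$, using normality and simplicity), and apply Theorem~\ref{thm surcovfs} together with the kernel bijection $\invquot{H_2(G)}\cong\text{\rm Inv}\gensub{H_2(G)}$ to the surjective piece. You merely supply a few details the paper leaves implicit (the lifting argument for normality of the image, the forced triviality of $A$ when the image is trivial, and the $\mathbf{Z}/p$ caveat for $H_{2\setminus 1}=H_2$), all of which are sound.
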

\begin{proof}
Recall that according to~\ref{prop identifyingidemp},  the sets $\cov{G}$  and
$\text{\rm Idem}(G)$ are in bijection with each other.
Let $c:X\ra G$ be a cellular cover. Since the image of $c$ is a normal subgroup of $G$,
this image is either the trivial group or the whole $G$. In the first case $X$ has to be trivial.
In the second case $c$ is a surjective cellular cover of $X$.  Thus according to~\ref{thm surcovfs}
the assignment that maps the trivial cellular cover to the element $0$ and a surjective cellular cover $c:X\onto G$
to the  image of $H_{2}(c):H_{2}(X)\subset H_{2}(G)$, is the desired bijection between
$\cov{G}$ and $\{0\}\coprod \text{\rm Inv}\gensub{H_{2}(G)}$.
\end{proof}
The key property of finite simple groups used to prove the above theorem is:

%%%%%%%%%%%%%%%%%%%%%%%%%%%%%%%%%%%%%%%%%%%%%%%%%%%%%%%%%%%%%%%%
%8.3
\begin{lemma}\label{lem basicsimplgr}
%%%%%%%%%%%%%%%%%%%%%%%%%%%%%%%%%%%%%%%%%%%%%%%%%%%%%%%%%%%%%%%%%%%%%
Let $c:X\onto G$ be a surjective generalized subgroup of a finite simple group
$G$.
Then any non-trivial homomorphism $f:X\ra G$ can be expressed as a composition of
$c:X\onto G$ and some automorphism $G\ra G$.
\end{lemma}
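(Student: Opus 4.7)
The plan is to show $\Ker(f) = K_c$, after which $f$ will descend uniquely to a homomorphism $h\colon X/K_c = G \to G$ satisfying $f = hc$, and $h$ will be shown to be an automorphism of $G$. First, two structural facts about $X$ should be recorded: since $c\colon X \onto G$ is a surjective generalized subgroup, Proposition \ref{prop propgsub}(1) gives $H_1(X) \cong H_1(G)$. For $G$ non-abelian simple this is zero, so $X$ is perfect; the abelian case $G = \zz/p$ has $H_{2\setminus 1}(G) = 0$, which by Theorem \ref{thm classificationsurgsub} forces $c$ to be an isomorphism and makes the claim immediate, so one may assume $G$ non-abelian. Also, $K_c$ is central in $X$ by Proposition \ref{prop charcCmono}, hence abelian.

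To prove $K_c \subseteq \Ker(f)$, I would consider the subgroup $c(\Ker(f)) \trianglelefteq G$, normal because $c$ is surjective. By simplicity, either $c(\Ker(f)) = G$ or $c(\Ker(f)) = 1$. In the first case $X = K_c \cdot \Ker(f)$, so the second isomorphism theorem gives $f(X) \cong X/\Ker(f) \cong K_c/(K_c \cap \Ker(f))$, which is abelian. But $f(X)$ is a homomorphic image of the perfect group $X$, hence perfect; being simultaneously abelian and perfect forces $f(X) = 1$, contradicting the non-triviality of $f$. Hence $c(\Ker(f)) = 1$, i.e., $\Ker(f) \subseteq K_c$.

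Next I will promote this inclusion to an equality. The quotient $f(X) = X/\Ker(f)$ sits in a central extension $1 \to K_c/\Ker(f) \to f(X) \to G \to 1$, so $|f(X)| = |G|\cdot|K_c/\Ker(f)|$. Since $f(X)$ is a subgroup of the finite group $G$, we must have $|K_c/\Ker(f)| = 1$ and $\Ker(f) = K_c$. The induced homomorphism $h\colon G \to G$ then satisfies $f = hc$, is non-trivial because $f$ is, has trivial kernel by simplicity of $G$, and is therefore an automorphism of the finite group $G$. The main point of difficulty is the case analysis on $c(\Ker(f))$, which is resolved by playing simplicity of $G$ against perfectness of $X$.
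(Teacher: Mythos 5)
Your proof is correct and follows essentially the same route as the paper: both consider the normal subgroup $c(\Ker(f))\trianglelefteq G$, use simplicity to split into the cases $c(\Ker(f))=G$ and $c(\Ker(f))=1$, rule out the first, and in the second use finiteness of $G$ to upgrade $\Ker(f)\subseteq K_c$ to the desired factorization through an automorphism. The only (minor) divergence is in ruling out the first case: the paper invokes Proposition~\ref{prop propgsub}(5) (which rests on $K_c\subseteq[X,X]$), whereas you play perfectness of $X$ against the abelianness of $K_c/(K_c\cap\Ker(f))$, at the cost of a separate easy treatment of $G=\mathbf{Z}/p$ --- both are sound.
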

\begin{proof}
Let $K_f=\text{Ker}(f)$  and $K_c=\text{Ker}(c)$.
Consider the following commutative diagram:
\[\xymatrix{
K_f\cap K_c\rmono \dmono & K_c\dmono \\
K_f\rmono \drto^{g}& X\rto^f \depi^{c} & G\\
& G
}\]
The image of $g$ is a normal subgroup of $G$. Since $G$ is simple, there are two possibilities.
Either $g$ is a surjection or it is the trivial homomorphism.

Assume that  $g$ is trivial. In this case $K_f$ is a subgroup of $K_c$ and we have the following commutative diagram:
\[\xymatrix{
X\rto^f\ar@{->>}[d]_{c}\ar@{->>}[dr] & G\\
G & X/K_f\ar@{^(->}[u]\ar@{->>}[l]
}\]
Finiteness of $G$ implies that the surjection  $X/K_f\onto G$ and the injection $X/K_f\hookrightarrow  G$ in the above diagram have to be isomorphisms. We can then use the commutativity of this  diagram to conclude that
$f$ can be expressed as a composition of $c:X\onto G$ and some automorphism $G\ra G$.

We will show that under the assumption that $f$ is non-trivial, the homomorphism $g$
can not be surjective.
Assume to the contrary that  $g$ is surjective.  In this case we can use~\ref{prop propgsub}(5) to get
equality $K_f=X$.  Consequently the surjectivity of $g$
implies the triviality of $f$.
\end{proof}

%%%%%%%%%%%%%%%%%%%%%%%%%%%%%%%%%%%%%%%%%%%%%%%%%%%%%%%%%%%%
%8.4
\begin{cor}
%%%%%%%%%%%%%%%%%%%%%%%%%%%%%%%%%%%%%%%%%%%%%%%%%%%%%%%%%%%%%%%
Let $G$ be a finite simple group and $X\in\text{\rm Idem}(G)$. Then any non-trivial
homomorphism $f:X\ra G$ is a cellular cover of $G$.
\end{cor}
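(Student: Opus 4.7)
The plan is to reduce the claim to a direct application of Lemma~\ref{lem basicsimplgr}, using simplicity of $G$ to control the image of the underlying cellular cover of $X$.

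First I would unpack what it means that $X\in\text{Idem}(G)$. By Proposition~\ref{prop identifyingidemp}, there is a cellular cover $c:X\ra G$, unique up to isomorphism of $X$, realizing $X$ as the value of an idempotent functor on $G$. The image $\text{im}(c)$ is a normal subgroup of $G$ (since $\text{Ker}(c)$ is central in $X$ and thus $\text{im}(c)$ is the image of a group under a homomorphism into a simple group; more directly, any subgroup arising as the image of a cellular cover of a simple group must be normal, because $\text{Hom}(X,-)$-compatibility forces it to be preserved by all inner automorphisms of $G$ via the lifts). Simplicity of $G$ then forces $\text{im}(c)$ to be either trivial or all of $G$. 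In the trivial case, $X$ itself is trivial, so there are no non-trivial homomorphisms $f:X\ra G$ and the claim holds vacuously.

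The substantive case is that $c:X\onto G$ is a surjective cellular cover. Given any non-trivial homomorphism $f:X\ra G$, apply Lemma~\ref{lem basicsimplgr} to the surjective generalized subgroup $c$ and to $f$: the lemma produces an automorphism $\alpha:G\ra G$ with $f=\alpha\circ c$.

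Finally, I would verify that $f$ is itself a cellular cover. For every $g\in\text{Hom}(X,X)$ one has
\[
\text{Hom}(X,f)(g)\;=\;f\circ g\;=\;\alpha\circ c\circ g\;=\;\alpha_{\ast}\bigl(\text{Hom}(X,c)(g)\bigr),
\]
so $\text{Hom}(X,f)=\alpha_{\ast}\circ\text{Hom}(X,c)$. Since $\alpha$ is an automorphism of $G$, post-composition with $\alpha$ is a bijection $\alpha_{\ast}:\text{Hom}(X,G)\ra\text{Hom}(X,G)$, and $\text{Hom}(X,c)$ is a bijection because $c$ is a cellular cover. Thus $\text{Hom}(X,f)$ is a bijection, which is exactly the definition of $f$ being a cellular cover.

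The only potential friction is the initial reduction to surjectivity: one needs to confirm that $\text{im}(c)$ is a genuine normal subgroup, but this follows from the cellular cover property combined with simplicity (or simply by observing that since $X$ is nontrivial and $G$ is simple, the image, being a nontrivial normal subgroup of $G$ whenever $c$ is not the trivial map, must be all of $G$). After that, the argument is essentially formal, resting entirely on Lemma~\ref{lem basicsimplgr} and the functoriality of $\text{Hom}(X,-)$.
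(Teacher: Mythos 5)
Your proposal is correct and follows essentially the same route as the paper: reduce to the surjective case using normality of the image of the underlying cellular cover and simplicity of $G$, then invoke Lemma~\ref{lem basicsimplgr} to write $f=\alpha\circ c$ and conclude that $\text{Hom}(X,f)=\alpha_{\ast}\circ\text{Hom}(X,c)$ is a bijection. Your first parenthetical justification for normality of $\mathrm{im}(c)$ is not cogent as stated, but the ``more directly'' argument via lifting inner automorphisms through the cellular cover is exactly right (it is Proposition~\ref{prop twofetit}(2) for $n=1$), so nothing is missing.
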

\begin{proof}
Let $X\in\text{\rm Idem}(G)$. According to~\ref{prop identifyingidemp},  there is a homomorphism $c:X\ra G$ which is a cellular cover. The image of $c$ is a normal
subgroup of $G$. It is then either  the trivial group or the whole group $G$. In the first case
$X$ is the trivial group and there is no non-trivial homomorphisms from $X$ to $G$. In the second case
we can use~\ref{lem basicsimplgr}, to conclude that $f:X\ra G$
can be expressed as a composition of $c:X\onto G$ and some automorphism $G\ra G$.
It is then clear that $\text{Hom}(X,f):\text{Hom}(X,X)\ra \text{Hom}(X,G)$ is a bijection
and  therefore $f$ is a cellular cover.
\end{proof}

\begin{proof}[Proof of~\ref{thm surcovfs}]
Let  $c:X\onto G$ be a surjective generalized subgroup whose differential
$\mu(c):H_{2}(G)\onto K_c$ represents an element in
$\text{\rm InvQuot}(H_{2}(G))$.
We will use~\ref{cor gscov} to prove the theorem.
According to~\ref{lem basicsimplgr}, any non-trivial homomorphism $f:X\ra G$ is a composition of
$c:X\onto G$ and an automorphism $h:G\ra G$. Consequently
$H_{2}(f)=H_{2}(h)H_{2}(c)$.
As the image of $H_{2}(c)$ is fixed by the action of $\text{Out}(G)$ on $\gensub{H_{2}(G)}$,
we have:
\[\text{image}(H_{2}(f))=\text{image}\left(H_{2}(h)H_{2}(c)\right)=
\text{image}(H_{2}(c))\]
By~\ref{cor gscov}, $c:X\onto G$ is then a cellular cover.
\end{proof}

%%%%%%%%%%%%%%%%%%%%%%%%%%%%%%%%%%%%%%%%%%%%%%%%%%%%%%%%%%%
%%%%%%%%%%%%%%%%%%%%%%%%%%%%%%%%%%%%%%%%%%%%%%%%%%%%%%%%%%
%%%%%%%%%%%%%%%%%%%%%%%%%%%%%%%%%%%%%%%%%%%%%%%%%%%%%%%%%%
%section9
\section{Iterated generalized subgroups and cellular covers}
%%%%%%%%%%%%%%%%%%%%%%%%%%%%%%%%%%%%%%%%%%%%%%%%%%%%%%%%%
%%%%%%%%%%%%%%%%%%%%%%%%%%%%%%%%%%%%%%%%%%%%%%%%%%%%%%%%%%
%%%%%%%%%%%%%%%%%%%%%%%%%%%%%%%%%%%%%%%%%%%%%%%%%%%%%%%%%%
Consider   the group $\text{PSL}_2(q)$ where $q$ is a power of an odd prime and distinct from $3$ and $9$. The initial
cellular cover of $\text{PSL}_2(q)$ is represented by
the universal central extension which is a surjection
$e:\text{SL}_2(q)\onto \text{PSL}_2(q)$ whose kernel is the center of $\text{SL}_2(q)$. This center is
isomorphic to ${\mathbf Z}/2$ and it is the only subgroup of $\text{SL}_2(q)$ isomorphic to  ${\mathbf Z}/2$.
Consequently   the inclusion ${\mathbf Z}/2\subset \text{SL}_2(q)$ is also a cellular cover.
The composition of these two cellular covers ${\mathbf Z}/2\subset \text{SL}_2(q)\onto \text{PSL}_2(q)$ is the trivial
homomorphism which is not a generalized subgroup. Thus in general neither  the composition of two cellular covers is a cellular cover nor is the composition of two
generalized subgroups a generalized subgroup. The aim of this section is to discuss
the possible homomorphisms  that are obtained as  compositions of several  cellular covers and of repeated generalized subgroups.

%%%%%%%%%%%%%%%%%%%%%%%%%%%%%%%%%%%%%%%%%%%%%%%%%%%%
%9.1
 \begin{Def}
%%%%%%%%%%%%%%%%%%%%%%%%%%%%%%%%%%%%%%%%%%%%%%%%%%%
Let  $G$ be a group and $n$ a positive integer.
\begin{enumerate}
\item A homomorphism $a:X\ra G$ is defined to be an $n$-{\bf iterated generalized subgroup} of $G$ if $a$ can be expressed as a composition:
\[\xymatrix{ X\ar@{=}[r]\ar@/ _ 15pt/[rrrr]|{a} &X_n\rto^{a_n} & \cdots\rto & X_1\rto^{a_1} & G
}\]
where all  $a_i$'s are generalized subgroups, i.e., $a_1:X_1\ra G$ is a generalized subgroup of $G$, $a_2:X_2\ra X_1$ is a generalized subgroup of $X_1$,  $a_3:X_3\ra X_2$ is a generalized subgroup of $X_2$,
etc.
Two such $n$-iterated generalized subgroups   $a:X\ra G$ and $b:Y\ra G$ are defined to be equivalent,  if
 there is
an isomorphism $h:X\ra Y$ such that  $bh=a$.
\item
Analogously  $a:X\ra G$ is defined to be  an $n$-{\bf iterated surjective generalized subgroup}, or
$n$-{\bf iterated cellular cover}, or  $n$-{\bf iterated surjective cellular cover} of $G$ if $a$ can be expressed as a
 composition of respectively $n$
surjective generalized subgroups, or $n$  cellular covers, or $n$  surjective cellular covers. Two such homomorphisms $a$ and $b$ are equivalent, if there is
an isomorphism $h$ such that  $bh=a$.
\end{enumerate}
\end{Def}

Iterated cellular covers and iterated generalized subgroups have the following two features in common with the ordinary cellular covers and generalized subgroups:

%%%%%%%%%%%%%%%%%%%%%%%%%%%%%%%%%%%%%%%%%%%%%%%%%%%%%%%%%%%%%%%%
%9.2
\begin{prop}\label{prop twofetit}
%%%%%%%%%%%%%%%%%%%%%%%%%%%%%%%%%%%%%%%%%%%%%%%%%%%%%%%%%%%%%%%%%%%%%
Let $G$ be a group.
\begin{enumerate}
\item If $a:X\ra G$ is an $n$-iterated generalized subgroup of $G$, then  its kernel is
central in $X$.
\item If $c:X\ra G$ is an $n$-iterated cellular cover of $G$, then its image is a fully invariant subgroup of $G$.
\end{enumerate}
\end{prop}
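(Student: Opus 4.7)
The plan is to prove both parts by induction on $n$, peeling off the outermost factor at each step. For the base case $n=1$, statement~(1) is precisely~\ref{prop charcCmono}(a), while~(2) follows immediately from the defining bijectivity of $\text{Hom}(X_1,c_1)$: any endomorphism $\phi:G\ra G$ yields $\phi c_1:X_1\ra G$, which factors uniquely as $c_1 s_1$ for some $s_1:X_1\ra X_1$, so $\phi(c_1(X_1))\subset c_1(X_1)$.

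The inductive step for~(1) is where the real content lies. I would write $a=a''\circ a_n$ with $a_n:X_n\ra X_{n-1}$ a single generalized subgroup and $a''=a_1\circ\cdots\circ a_{n-1}:X_{n-1}\ra G$ an $(n-1)$-iterated generalized subgroup; by induction, $\text{Ker}(a'')$ is central in $X_{n-1}$. For $x\in \text{Ker}(a)=a_n^{-1}(\text{Ker}(a''))$ and any $y\in X_n$, centrality of $a_n(x)$ in $X_{n-1}$ gives $a_n([x,y])=[a_n(x),a_n(y)]=1$, so $[x,y]\in\text{Ker}(a_n)$. This by itself is not enough to force $[x,y]=1$, and the argument genuinely needs both conclusions of~\ref{prop charcCmono} applied to $a_n$. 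The plan is to let $\psi_x:X_n\ra \text{Ker}(a_n)$ send $y\mapsto[x,y]$: centrality of $\text{Ker}(a_n)$ in $X_n$ makes $\psi_x$ a group homomorphism via the identity $[x,y_1 y_2]=[x,y_1]\cdot y_1[x,y_2]y_1^{-1}$ (whose second factor lies in the center and is therefore unaffected by conjugation), and then $\text{Hom}(X_n,\text{Ker}(a_n))=0$ forces $\psi_x$ to be trivial, whence $x\in Z(X_n)$.

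The inductive step for~(2) is essentially formal and should present no obstacle. Given $\phi:G\ra G$ and $c=c_1\circ\cdots\circ c_n$, I would build a compatible tower of self-maps $s_i:X_i\ra X_i$ by iterated lifting: first lift $\phi c_1$ through the cellular cover $c_1$ to obtain $s_1$ with $c_1 s_1=\phi c_1$; then lift $s_1 c_2$ through $c_2$ to obtain $s_2$ with $c_2 s_2=s_1 c_2$; and so on. Splicing the commuting squares telescopes to $\phi c=cs_n$, so $\phi(c(X_n))\subset c(X_n)$ and $\text{im}(c)$ is fully invariant in $G$. In summary, the only delicate point in the whole proposition is the commutator-to-homomorphism trick in~(1), which is what forces both clauses in the characterization of a generalized subgroup from~\ref{prop charcCmono} to pull their weight simultaneously.
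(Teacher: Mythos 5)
Your proof is correct, and part~(2) is essentially identical to the paper's argument (lift the endomorphism $\phi$ step by step through the tower of cellular covers and splice the commuting squares). For part~(1), however, you take a genuinely different, more computational route. The paper's proof rests on the single observation that for \emph{any} generalized subgroup $a:X\ra Y$ one has $a^{-1}(Z(Y))\subset Z(X)$: if $a(x)$ is central in $Y$, then $a\circ c_x=a=a\circ\mathrm{id}$ (where $c_x$ is conjugation by $x$), and the defining injectivity of $\mathrm{Hom}(X,a)$ forces $c_x=\mathrm{id}$; the inductive step is then the one-line containment $\mathrm{Ker}(a)=a_n^{-1}(\mathrm{Ker}(a''))\subset a_n^{-1}(Z(X_{n-1}))\subset Z(X_n)$. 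You instead pass through the characterization of Proposition~\ref{prop charcCmono} and run the commutator-to-homomorphism trick ($y\mapsto[x,y]$ lands in the central kernel of $a_n$, is a homomorphism, and is killed by $\mathrm{Hom}(X_n,\mathrm{Ker}(a_n))=0$). This is valid, but it amounts to re-proving, in the special case of conjugations, the injectivity argument already contained in the proof of~\ref{prop charcCmono}; your claim that the argument \emph{genuinely needs} both clauses of that characterization is therefore a little misleading --- what it needs is the injectivity of $\mathrm{Hom}(X_n,a_n)$, of which the two clauses are merely an equivalent repackaging. The paper's version buys brevity and makes the mechanism transparent (compare $c_x$ with the identity); yours buys nothing extra here, but it is self-contained modulo~\ref{prop charcCmono} and all the steps, including the identity $[x,y_1y_2]=[x,y_1]\cdot y_1[x,y_2]y_1^{-1}$ and the use of centrality of $\mathrm{Ker}(a_n)$ to see that the second factor equals $[x,y_2]$, check out.
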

\begin{proof}
\noindent (1):\quad
First notice that if $a: X\to Y$ is a generalized subgroup, then $a^{-1}(Z(Y))\subset Z(X)$.
Indeed, for $x\in a^{-1}(Z(Y))$ the map $a\circ c_x=a$, where $c_x$ is conjugation by $x$,
hence $c_x={\rm id},$ so $x\in Z(X)$.  Now by induction on $n$ it is easily seen
that the kernel of an $n$-iterated generalized subgroup $a: X\ra G$ is central in $X$.
\smallskip

\noindent (2):\quad    Assume $c:X\ra G$ is a composition of  $n$ cellular covers
$X_n\xrightarrow{c_n}  \cdots\ra  X_1\xrightarrow{c_1}  G$. For any homomorphism $h:G\ra G$, using the fact that $c_i$'s are cellular covers, there is a unique sequence of homomorphisms $\{h_i:X_i\ra X_i\}_{1\leq i\leq n}$ for which the following diagram commutes:
\[\xymatrix{ X\ar@{=}[r]\ar@/^ 20pt/[rrrr]|{c} &X_n\ar@{->>}[r]^(.45){c_n}\dto_{h_n} & \cdots\rto & X_1\ar@{->>}[r]^(.42){c_1}\dto^{h_1} & G\dto^{h}\\
X\ar@{=}[r]\ar@/_ 15pt/[rrrr]|{c} &X_n\ar@{->>}[r]^(.45){c_n} & \cdots\rto & X_1\ar@{->>}[r]^{c_1} & G
}
\]
Commutativity of this diagrams implies that $h$ maps the image of $c$ into itself.
\end{proof}

%%%%%%%%%%%%%%%%%%%%%%%%%%%%%%%%%%%%%%%%%%%%%%%%%%%%%%%%%%%%%%%%%%%%%%%%%%%%%%
%9.3
\begin{Def}\label{def iteratedcollections}
%%%%%%%%%%%%%%%%%%%%%%%%%%%%%%%%%%%%%%%%%%%%%%%%%%%%%%%%%%%%%%%%%%%%%%%%%%%%%%
Let $G$ be a group and $n$ a positive integer.
\begin{enumerate}
\item $\itgensub{G}{n}$ denotes the collection of equivalence classes of $n$-iterated generalized subgroups of $G$.
\item  $\itsurgensub{G}{n}$ denotes the collection of equivalence classes of $n$-iterated surjective generalized subgroups of $G$.
\item  $\itcov{G}{n}$ denotes the collection of equivalence classes of $n$-iterated cellular covers  of $G$.
\item $\itsurcov{G}{n}$ denotes the collection of equivalence classes of $n$-iterated surjective cellular covers  of $G$.
\end{enumerate}
\end{Def}
According to this definition $\gensub{G}=\itgensub{G}{1}$,
$\surgensub{G}=\itsurgensub{G}{1}$, $\cov{G}=\itcov{G}{1}$,
and $\surcov{G}=\itsurcov{G}{1}$.

The identity homomorphisms are  cellular covers and  hence the following inclusions hold:
\[\xymatrix@R-10pt{
\itsurcov{G}{n}\ar@{^(->}[rr] \ar@{^(->}[dd] \ar@{^(->}[dr]& & \itsurgensub{G}{n}\ar@{^(->}[dd]|\hole \ar@{^(->}[dr]\\
& \itsurcov{G}{n+1}\ar@{^(->}[rr] \ar@{^(->}[dd] & &  \itsurgensub{G}{n+1}\ar@{^(->}[dd]\\
\itcov{G}{n}\ar@{^(->}[rr]|\hole  \ar@{^(->}[dr]& & \itgensub{G}{n} \ar@{^(->}[dr]\\
 & \itcov{G}{n+1}\ar@{^(->}[rr]& & \itgensub{G}{n+1}\\
}\]
By summing up these  inclusions we can  extend Definition~\ref{def iteratedcollections} to:
\begin{Def} Let $G$ be a group. Define:
\[\xymatrix@R-10pt{\itsurcov{G}{\infty}:=\bigcup_{n\geq 1}\itsurcov{G}{n}
&  \itsurgensub{G}{\infty}:=\bigcup_{n\geq 1}  \itsurgensub{G}{n},\\
\itcov{G}{\infty}:=\bigcup_{n\geq 1} \itcov{G}{n}&
 \itgensub{G}{\infty}:=\bigcup_{n\geq 1}  \itgensub{G}{n}}\]
\end{Def}

We are interested in the above collections primary in the case when $G$ is finite.
Recall that  a finite group $G$ has a composition series $1= G_0\subset \cdots\subset G_l=G$ and any such series has the same length $l$ which is called   the composition length of $G$.

%%%%%%%%%%%%%%%%%%%%%%%%%%%%%%%%%%%%%%%%%%%%%%%%%%%%%%%%%%%%%%%%%%%%%%%%%%%%%%%%%%%%%%%%%%%%%%%%%%
%9.5
\begin{thm} \label{thm iteratedgsubfinite}
%%%%%%%%%%%%%%%%%%%%%%%%%%%%%%%%%%%%%%%%%%%%%%%%%%%%%%%%%%%%%%%%%%%%%%%%%%%%%%%%%%%%%
Let  $G$ be a finite group and $l$ its  composition length.
Then  $ \itgensub{G}{l+1}= \itgensub{G}{\infty}$.
\end{thm}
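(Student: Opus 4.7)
The plan is to prove the non-trivial inclusion $\itgensub{G}{\infty} \subseteq \itgensub{G}{l+1}$, the reverse inclusion being immediate from the chain of inclusions $\itgensub{G}{k} \subseteq \itgensub{G}{k+1}$ (obtained by pre-composing with identities). I would proceed by induction on $n$, showing that any $n$-iterated generalized subgroup with $n \geq l+2$ is equivalent to an $(n-1)$-iterated one.

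Given a representative $a : X_n \to X_{n-1} \to \cdots \to X_1 \to G$ of an element of $\itgensub{G}{n}$, my first move is to factor each step $a_i : X_i \to X_{i-1}$ through its image as $X_i \twoheadrightarrow I_i \hookrightarrow X_{i-1}$; the characterization of generalized subgroups via central kernels in Proposition~\ref{prop charcCmono} guarantees that both factors are themselves generalized subgroups. This produces a refined alternating chain of surjective generalized subgroups and subgroup inclusions. I would track the descending chain of subgroups $G \supseteq J_1 \supseteq J_2 \supseteq \cdots \supseteq J_n$ of $G$, where $J_i$ is the image of the full composition $X_i \to G$.

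Next I would fix a composition series $1 = G_0 \triangleleft G_1 \triangleleft \cdots \triangleleft G_l = G$ of $G$ and pull it back along each composition $X_i \to G$, producing for each $i$ a subnormal filtration of $X_i$ indexed by a chain of length at most $l+1$. Combining this with the image chain, the heart of the argument is a pigeonhole observation: among $n+1 \geq l+3$ indices, two consecutive steps $i, i+1$ must give the same subnormal data and, in particular, $J_{i+1} = J_i$ together with matching differential data in $H_{2\setminus 1}(J_i)$. At such a location the surjective part of $X_{i+1} \to X_i$ and the surjective part of $X_i \to X_{i-1}$ are both classified, via Proposition~\ref{prop mupsiid} and Theorem~\ref{thm classificationsurgsub}, by quotients of the same Schur multiplier, and Proposition~\ref{prop propgsub}(5) forces the intermediate inclusion to be an equality. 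This allows the two consecutive steps to be merged into a single generalized subgroup $X_{i+1} \to X_{i-1}$ inducing the same overall composition, dropping the length of the chain by one.

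The main obstacle I foresee is the pigeonhole step: isolating the correct combinatorial invariant attached to each step $a_i$ whose total variation along the chain is bounded by $l$. Because the images $J_i$ need not be normal in $G$, a naive chain-length bound fails, and one really has to combine the subnormal data obtained from the pulled-back composition series with the central-kernel structure of Proposition~\ref{prop twofetit}(1) and the finiteness provided by Proposition~\ref{prop presofsolnilfin}(2). Once this invariant is pinned down, merging should be essentially a formal consequence of the classification results of Sections~\ref{sec initialcover} and~\ref{sec gensuboffinite}.
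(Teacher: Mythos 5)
There is a genuine gap, and it sits exactly where you place it yourself: the pigeonhole. The paper does not pigeonhole at all; its argument runs in the opposite direction. One first merges every adjacent pair $a_i,a_{i+1}$ whose composite happens to be a generalized subgroup, so that one may assume no adjacent composite is one, and then proves that under this assumption the global images $J_{i+1}=\mathrm{im}(a_1\cdots a_{i+1})$ are \emph{strictly} smaller than $J_i$ at every step, which bounds $n$ by $l+1$. The engine making this work is Lemma~\ref{lem keycomgsub}(1): a composite of two generalized subgroups is again a generalized subgroup as soon as every prime dividing $|H_1(X_{i+1})|$ divides $|H_1(X_i)|$. If $J_{i+1}=J_i$, then writing $b=a_1\cdots a_i$ and $K_b=\Ker b$, one has $X_i=I_{a_{i+1}}K_b$ with $K_b$ central in $X_i$ (Proposition~\ref{prop twofetit}(1)), hence $[X_i,X_i]=[I_{a_{i+1}},I_{a_{i+1}}]$, hence $H_1(X_{i+1})\cong H_1(I_{a_{i+1}})\hookrightarrow H_1(X_i)$, and the lemma applies, contradicting the merging assumption. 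Your proposal never identifies this divisibility criterion, and without it the merging cannot be ``a formal consequence of the classification results'': Theorem~\ref{thm classificationsurgsub} and Proposition~\ref{prop mupsiid} classify surjective generalized subgroups of a fixed group and say nothing about when a composite of two generalized subgroups is again one (the paper's example $\mathbf{Z}/2\subset \mathrm{SL}_2(q)\onto \mathrm{PSL}_2(q)$ shows this can fail).

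Two more specific problems. First, the appeal to Proposition~\ref{prop propgsub}(5) is not available: that statement requires a \emph{surjective generalized subgroup} $X_i\onto I$, whereas the composite $a_1\cdots a_i:X_i\ra G$ is only an iterated generalized subgroup (its kernel is central, but the vanishing of homomorphisms into that kernel can fail), and equality of the \emph{global} images $J_{i+1}=J_i$ does not yield $a_i(I_{a_{i+1}})=I_{a_i}$ inside $X_{i-1}$, which is what you would need in order to invoke (5) on the surjective part of $a_i$; it only yields this equality modulo the kernel of $a_1\cdots a_{i-1}$. Second, the combinatorial invariant is left undefined: you correctly observe that the $J_i$ need not be normal, so a naive chain-length bound fails, but the ``subnormal data obtained from the pulled-back composition series'' is neither shown to take at most $l+1$ distinct values along the chain nor shown to force $J_{i+1}=J_i$ (let alone matching differential data) when it repeats. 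As written, the proposal reduces the theorem to an unproved claim at its central step, and the missing ingredient is precisely the paper's Lemma~\ref{lem keycomgsub}(1) together with the $H_1$ computation above.
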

To prove the theorem we need:

%%%%%%%%%%%%%%%%%%%%%%%%%%%%%%%%%%%%%%%%%%%%%%%%%%%%%%%%%%%%%%%%%%%%%%%%%%%%%%
%9.6
\begin{lemma}\label{lem keycomgsub}
%%%%%%%%%%%%%%%%%%%%%%%%%%%%%%%%%%%%%%%%%%%%%%%%%%%%%%%%%%%%%%%%%%%%%%%%%%%%%%%%%
Let  $G$ be  finite and  $X_2\xrightarrow{a_2}X_1\xrightarrow{a_1} G$ be  generalized subgroups.
\begin{enumerate}
\item If any prime that divides the order of $H_1(X_2)$ divides also the  order of $H_1(X_1)$, then the composition $a_1 a_2:X_2\ra G$ is a generalized subgroup.
\item If  $a_2$ is surjective,  then the composition $a_1 a_2:X_2\ra G$ is a generalized subgroup.
\end{enumerate}
\end{lemma}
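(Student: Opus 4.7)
My plan is to invoke the characterization of Proposition~\ref{prop charcCmono}: the composition $a_1 a_2$ is a generalized subgroup if and only if $K := \text{Ker}(a_1 a_2) = a_2^{-1}(\text{Ker}(a_1))$ is central in $X_2$ and $\text{Hom}(X_2, K) = 0$. I will establish (1) first and then obtain (2) by checking that its hypothesis forces the one of (1).

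For centrality, fix $x \in K$. Then $a_2(x) \in \text{Ker}(a_1) \subset Z(X_1)$, so for every $y \in X_2$ the commutator $[x,y]$ satisfies $a_2([x,y]) = [a_2(x),a_2(y)] = 1$, i.e.\ $[x,y] \in \text{Ker}(a_2)$, which itself is central in $X_2$. Using this centrality, the identity $[x,yy'] = [x,y]\cdot y[x,y']y^{-1}$ collapses to $[x,y][x,y']$, so $y \mapsto [x,y]$ is a genuine homomorphism $X_2 \to \text{Ker}(a_2)$. Since $a_2$ is a generalized subgroup, $\text{Hom}(X_2, \text{Ker}(a_2)) = 0$, so this homomorphism vanishes and $x \in Z(X_2)$.

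For the vanishing of $\text{Hom}(X_2, K)$, observe that $K$ is abelian by the previous step, so $\text{Hom}(X_2, K) = \text{Hom}(H_1(X_2), K)$. I will feed the short exact sequence
\[ 0 \to \text{Ker}(a_2) \to K \to a_2(K) \to 0, \qquad a_2(K) \subset \text{Ker}(a_1), \]
into this $\text{Hom}$. The left end contributes nothing since $\text{Hom}(X_2, \text{Ker}(a_2)) = 0$. For the right end I plan to use coprimality: since $a_1$ is a generalized subgroup into the finite group $G$, Proposition~\ref{prop presofsolnilfin}(2) gives finiteness of $X_1$ and $\text{Ker}(a_1)$, and the identity $\text{Hom}(H_1(X_1), \text{Ker}(a_1)) = \text{Hom}(X_1, \text{Ker}(a_1)) = 0$ forces $|H_1(X_1)|$ to be coprime to $|\text{Ker}(a_1)|$. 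The hypothesis of (1) transfers this coprimality to $|H_1(X_2)|$, and hence to the subgroup $|a_2(K)|$, so $\text{Hom}(H_1(X_2), a_2(K)) = 0$. A sandwich argument then yields $\text{Hom}(X_2, K) = 0$.

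Finally, (2) reduces to (1): when $a_2$ is surjective, $H_1(a_2)$ is surjective, and Proposition~\ref{prop presofsolnilfin}(2) applied to $a_2$ (with $X_1$ finite) gives $\text{Ker}(a_2) \subset [X_2, X_2]$, so $H_1(a_2)$ is injective as well. Thus $H_1(X_2) \cong H_1(X_1)$ and the prime hypothesis of (1) holds automatically. The main obstacle I expect is the centrality step: it hinges on the small but crucial observation that the centrality of $\text{Ker}(a_2)$ is exactly what is needed to promote the commutator map $y \mapsto [x,y]$ to a homomorphism, so that the defining property of $a_2$ can be applied.
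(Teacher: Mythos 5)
Your proof is correct and follows essentially the same route as the paper's: reduce to the characterization in Proposition~\ref{prop charcCmono}, feed the extension $\mathrm{Ker}(a_2)\subset \mathrm{Ker}(a_1a_2)\to \mathrm{Ker}(a_1)$ into $\mathrm{Hom}(X_2,-)$, and kill the right-hand end by the coprimality of $|H_1(X_1)|$ and $|\mathrm{Ker}(a_1)|$ transferred to $H_1(X_2)$ via the prime hypothesis, with (2) reducing to (1) through $H_1(X_2)\cong H_1(X_1)$. The only cosmetic difference is the centrality step, where the paper invokes Proposition~\ref{prop twofetit}(1) (the one-line observation that $a_2c_x=a_2$ forces $c_x=\mathrm{id}$) while you unwind the same fact through the commutator homomorphism $y\mapsto[x,y]$ into the central kernel of $a_2$.
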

\begin{proof}
\noindent (1):\quad
According to~\ref{prop twofetit}, the kernel $K_{a_1a_2}$ of the composition  $a_1a_2:X_2\ra  G$ is central in $X_2$. Thus to show that
$a_1a_2$ is a generalized subgroup, we need to prove $\text{Hom}(X_2, K_{a_1a_2})=0$
(see~\ref{prop charcCmono}).  The kernels
$K_{a_1}$ and $K_{a_2}$ of $a_1$ and $a_2$ and the kernel $K_{a_1a_2}$ are abelian groups and they fit
into an exact sequence
$ K_{a_2}\subset K_{a_1a_2}\onto K_{a_1}$. Since  $\text{Hom}(X_2, K_{a_2})=0$, we get an inclusion:
\[\text{Hom}(X_2, K_{a_1a_2})\subset \text{Hom}(X_2,  K_{a_1})=\text{Hom}(H_1(X_2),  K_{a_1}).\]
Since $a_1$ is a generalized subgroup $\text{Hom}\left(H_1(X_1),  K_{a_1}\right)=
\text{Hom}(X_1,  K_{a_1})=0$. This means that the primes that divide the order of $K_{a_1}$ do not divide then order
of $H_1(X_1)$.  By the assumption  the primes that divide
the order of $K_{a_1}$ can  not divide the order of $H_1(X_2)$  either. Consequently
$\text{Hom}\left(H_1(X_2),  K_{a_1}\right)=0$.
\medskip

\noindent (2):\quad
If $a_2:X_2\ra X_1$ is surjective, then  $H_1(X_2)$ and $H_1(X_1)$ are isomorphic
(see~\ref{prop propgsub}(1)). Statement (2) follows thus  from (1).
\end{proof}

%%%%%%%%%%%%%%%%%%%%%%%%%%%%%%%%%%%%%%%%%%%%%%%%%%%%%%%%%%%%%%%%%%%%%%%%%%%%%%%
\begin{proof}[Proof of~\ref{thm iteratedgsubfinite}]
%%%%%%%%%%%%%%%%%%%%%%%%%%%%%%%%%%%%%%%%%%%%%%%%%%%%%%%%%%%%%%%%%%%%%%%%%%%%%%%%%%%%%%%%%%%%%%%%
Assume that $a:X\ra G$ can be expressed  as a composition of generalized subgroups
$X_n\xrightarrow{a_n}\cdots\ra X_1\xrightarrow{a_1} G$.  By performing compositions  if necessary, we can assume that {\bf none} of the adjacent composition
$a_ia_{i+1}$ is a generalized subgroup. To prove the theorem we need to show that
$n\leq l+1$. To do that it is enough to prove that the image of the composition $a_{1}\cdots a_ia_{i+1}:X_{i+1}\ra G$ is a {\bf proper} subgroup
of the image of $a_{1}\cdots a_i:X_i\ra G$ for any $1\leq i\leq n-1$.
This is because these images will lead to a proper normal series of length $n-1$
in $G$.  Since the composition length of $G$ is $l$, we must have $n\le l+1$.

Assume that this is not the case. Let $i$ be
an index for which $b:=a_{1}\cdots a_i$ and $ba_{i+1}$ have the same image in $G$.
Let $I_{a_{i+1}}=a_{i+1}(X_{i+1})$ and let $K_b:=\Ker b$.  By~\ref{prop twofetit}(1),
$\Ker b$ is central in $X_i$ and by hypothesis $X_i=I_{a_{i+1}}K_b$.  Hence
$[X_i, X_i]=[I_{a_{i+1}}, I_{a_{i+1}}]$ and it follows that $H_1(I_{a_{i+1}})$
is a subgroup of $H_1(X_i)$.  But according to~\ref{prop propgsub}(1),
$H_1(X_{i+1})\cong H_1(I_{a_{i+1}})$.  This together with~\ref{lem keycomgsub}(1)
shows that $a_ia_{i+1}$ is a generalized subgroup, a contradiction.
\end{proof}

%%%%%%%%%%%%%%%%%%%%%%%%%%%%%%%%%%%%%%%%%%%%%%%%%%%%%%%%%%%%%%%%%%%%%%%%%%%%%%%%%%%%%%%
%9.7
\begin{cor} \label{cor basicconsmth}
%%%%%%%%%%%%%%%%%%%%%%%%%%%%%%%%%%%%%%%%%%%%%%%%%%%%%%%%%%%%%%%%%%%%%%%%%%%%%%%%%
Let $G$ be a finite group.
\begin{enumerate}
\item  $\itgensub{G}{\infty}$, $\itcov{G}{\infty}$, $\itsurgensub{G}{\infty}$,
and  $\itsurcov{G}{\infty}$ are finite sets.
\item There is a positive integer $N$ for which $\itcov{G}{N}=\itcov{G}{\infty}$.
\item For any  positive integer $n$, $\surgensub{G} =\itsurgensub{G}{n}=\itsurgensub{G}{\infty}$.
\end{enumerate}
\end{cor}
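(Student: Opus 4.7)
The plan for part (1) is to show by induction on $n$ that $\itgensub{G}{n}$ is finite, then invoke Theorem~\ref{thm iteratedgsubfinite} to conclude $\itgensub{G}{\infty}=\itgensub{G}{l+1}$ is finite. The base case $n=1$ is Corollary~\ref{cor covidemfinite}. For the inductive step I would fix, for each class in $\itgensub{G}{n}$, a representative $a\colon X_a\to G$; by~\ref{prop presofsolnilfin}(2) applied along any tower realizing $[a]$, the group $X_a$ is finite, so $\gensub{X_a}$ is finite by Corollary~\ref{cor covidemfinite}. Any class in $\itgensub{G}{n+1}$, represented by a tower $X_{n+1}\to X_n\to\cdots\to X_1\to G$, has its length-$n$ truncation $X_n\to G$ equivalent via an isomorphism $h$ to one of the chosen $a\colon X_a\to G$; absorbing $h$ into the terminal generalized subgroup, the full composition becomes $a\circ b$ for some $b\in\gensub{X_a}$. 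This exhibits a surjection from the finite set $\coprod_{[a]\in\itgensub{G}{n}}\gensub{X_a}$ onto $\itgensub{G}{n+1}$, which is therefore finite. Since every cellular cover is a generalized subgroup and every surjective cellular cover a surjective generalized subgroup, the three remaining sets $\itcov{G}{\infty}$, $\itsurgensub{G}{\infty}$, $\itsurcov{G}{\infty}$ all sit inside $\itgensub{G}{\infty}$ and hence are finite as well.

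Part (2) is then immediate: the inclusions $\itcov{G}{n}\subset\itcov{G}{n+1}$, obtained by adjoining identity cellular covers to any representative tower, form an increasing chain inside the finite set $\itcov{G}{\infty}$, and so must stabilize at some $N$, giving $\itcov{G}{N}=\itcov{G}{\infty}$.

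For part (3), I would iterate Lemma~\ref{lem keycomgsub}(2). Induct on $n$: an $n$-iterated surjective generalized subgroup $a$ factors as $a_1\circ\bar a$, where $a_1\colon X_1\to G$ is a surjective generalized subgroup and $\bar a\colon X_n\to X_1$ is, by the inductive hypothesis applied with $X_1$ in place of $G$, itself a single surjective generalized subgroup. Lemma~\ref{lem keycomgsub}(2) then ensures that $a=a_1\bar a$ is a generalized subgroup, and surjectivity is manifestly preserved under composition. Thus $\itsurgensub{G}{n}\subset\surgensub{G}$, while the reverse inclusion $\surgensub{G}=\itsurgensub{G}{1}\subset\itsurgensub{G}{n}$ follows by padding with the identity. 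Taking the union over $n$ yields the entire chain of equalities, including that with $\itsurgensub{G}{\infty}$.

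The only genuinely delicate point is the bookkeeping for the induction in (1), where one must verify that the bound on $|\itgensub{G}{n+1}|$ in terms of $\sum_{[a]}|\gensub{X_a}|$ is robust under the choice of representatives $a$ and does not secretly depend on the tower presentation of the class $[a]$. Once this is handled, parts (2) and (3) drop out as essentially formal consequences of Theorem~\ref{thm iteratedgsubfinite} and Lemma~\ref{lem keycomgsub}(2) respectively.
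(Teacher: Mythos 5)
Your argument is correct and follows essentially the same route as the paper: part (1) is the induction implicit in the paper's appeal to Corollary~\ref{cor classificationgsub} together with~\ref{prop presofsolnilfin}(2) (finiteness of each $\itgensub{G}{n}$, then Theorem~\ref{thm iteratedgsubfinite} to cap the tower length), part (2) is the identical stabilization-of-an-increasing-chain argument, and part (3) is the same iteration of Lemma~\ref{lem keycomgsub}(2). The only difference is that you spell out the surjection $\coprod_{[a]}\gensub{X_a}\onto\itgensub{G}{n+1}$ and its well-definedness explicitly, which the paper leaves implicit.
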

\begin{proof}
\noindent (1):\quad
A direct consequence of~\ref{cor classificationgsub} and
the fact that
a generalized subgroup of a finite group is finite (see~\ref{prop presofsolnilfin}(2))
is finiteness of $\itgensub{G}{n}$ for any $n$.  Finiteness of $\itgensub{G}{\infty}$
is then a consequence of~\ref{thm iteratedgsubfinite}.
\medskip

\noindent (2):\quad
Since $\itcov{G}{\infty}$ is finite and it is a sum of an increasing sequence of sets
$\itcov{G}{1}\subset \itcov{G}{2}\subset\cdots$, we mast have
that, for some
$N$, $\itcov{G}{N}=\itcov{G}{\infty}$.
\medskip

\noindent (3):\quad
According to~\ref{lem keycomgsub}(2) a composition of surjective generalized subgroups is a
generalized subgroup, which is obviously surjective.
\end{proof}

For a finite group $G$, we have an inclusion $\itsurcov{G}{\infty}\subset \surgensub{G}$
(see~\ref{cor basicconsmth}(3)).
 To understand  some constrains on elements of
$ \surgensub{G}$ which  lie in $\itsurcov{G}{\infty}$, let us recall  that,
according to~\ref{thm classificationsurgsub},
we have   the following bijections:
\[\xymatrix@R-20pt{
\surgensub{G}\rto  & \text{Quot}(H_{2\setminus 1}(G)) \rto & \gensub{(H_{2\setminus 1}(G)} \\
{(c:X\onto G)}\ar@{|->}[r]&  {(\mu(c):H_{2\setminus 1}(G)\onto K_c)}\ar@{|->}[r] &H_{2\setminus 1}(X).
 }\]

%%%%%%%%%%%%%%%%%%%%%%%%%%%%%%%%%%%%%%%%%%%%%%%%%%%%%%%%%%%%%%%%%%%%%%%
%9.8
\begin{prop}\label{prop invarianceintersurcov}
%%%%%%%%%%%%%%%%%%%%%%%%%%%%%%%%%%%%%%%%%%%%%%%%%%%%%%%%%%%%%%%%%%%%%%
Let $G$ be a finite group. If $c:X\onto G$  is an   $n$-iterated surjective cellular cover, then its differential $\mu(c):
H_{2\setminus 1}(G)\onto K_c$
represents an element in  $\invquot{H_{2\setminus 1}(G)}$.
\end{prop}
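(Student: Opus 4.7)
My plan is to generalize the argument of Proposition~\ref{prop diffcovfix}. Inspection of that proof shows the only property of a surjective cellular cover actually used is the \emph{lifting property}: for every automorphism $h: G \ra G$ there exists a homomorphism $h': X \ra X$ with $ch' = hc$. The rest is naturality of the differential together with $\Ker c$ being central in $X$, both of which will persist for iterated surjective cellular covers.

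The lifting property is stable under composition: if $c_1: X_1 \onto G$ and $c_2: X_2 \onto X_1$ are cellular covers, then $h$ lifts first through $c_1$ to some $h_1: X_1 \ra X_1$, and $h_1$ in turn lifts through $c_2$ to some $h_2: X_2 \ra X_2$, giving $(c_1 c_2) h_2 = h (c_1 c_2)$. Iterating, any $n$-iterated surjective cellular cover $c: X \onto G$ admits, for each $h \in \Aut(G)$, a lift $\bar h: X \ra X$ with $c \bar h = h c$. Moreover, by~\ref{cor basicconsmth}(3) (since iterated surjective cellular covers are in particular iterated surjective generalized subgroups, hence surjective generalized subgroups), $c$ is itself a surjective generalized subgroup of $G$. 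Its differential $\mu(c): H_{2\setminus 1}(G) \onto K_c$ is therefore defined; and $\bar h$ restricts to an endomorphism $h': K_c \ra K_c$ of the central kernel.

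Applying naturality of the exact sequence of a central extension to the square $c \bar h = h c$ and localizing at the set of primes dividing $|H_1(G)|$ (legitimate because $H_1(X) \cong H_1(G)$ by~\ref{prop propgsub}(1), so $H_{2\setminus 1}$ is functorial on the homomorphisms in sight), I obtain
\[
\mu(c) \circ H_{2\setminus 1}(h) \;=\; h' \circ \mu(c).
\]
Hence $H_{2\setminus 1}(h)$ maps $\Ker \mu(c)$ into itself; applying the same reasoning to $h^{-1}$ gives the reverse inclusion, so $\Ker \mu(c)$ is invariant under $\text{Out}(G)$. Equivalently, $[\mu(c)] \in \text{Quot}(H_{2\setminus 1}(G))$ is fixed, i.e.\ lies in $\invquot{H_{2\setminus 1}(G)}$.

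The main (and quite minor) obstacle is just the observation that the proof of~\ref{prop diffcovfix} never genuinely used $c$ being a cellular cover, only the lifting property, which composes. Everything else — naturality, centrality of the kernel, and the definition of the differential — is already available for $c$ as a surjective generalized subgroup via~\ref{cor basicconsmth}(3) and~\ref{prop propgsub}.
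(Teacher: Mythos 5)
Your proposal is correct and follows essentially the same route as the paper: the paper's proof likewise lifts an automorphism $h$ of $G$ step by step through the chain $X_n\onto\cdots\onto X_1\onto G$ to obtain a compatible self-map of $X$, and then invokes naturality of the differential to conclude that $H_{2\setminus 1}(h)$ preserves $\Ker\mu(c)$. Your explicit appeal to Corollary~\ref{cor basicconsmth}(3) to justify that the differential of $c$ is even defined, and the use of $h^{-1}$ to get two-sided invariance, are small points the paper leaves implicit but do not change the argument.
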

\begin{proof}
The argument is exactly the same as in the proof of~\ref{prop diffcovfix}.
Assume that  $c:X\onto G$ can be expressed as a composition of $n$-surjective cellular covers:
$X_n\stackrel{c_n}{\onto}\cdots\onto X_1\stackrel{c_1}{\onto} G$.
%\[\xymatrix{ X\ar@{=}[r]\ar@/ _ 20pt/[rrrr]|{c} &X_n\ar@{->>}[r]^(.45){c_n} & \cdots\rto & X_1\ar@{->>}[r]^{c_1} & G
%}\]
For any isomorphism $h:G\ra G$, using the fact that $c_i$'s are cellular covers, there is a unique sequence of isomorphisms $\{h_i:X_i\ra X_i\}_{1\leq i\leq n}$ for which the following diagram commutes:
\[\xymatrix{ X\ar@{=}[r]\ar@/^ 20pt/[rrrr]|{c} &X_n\ar@{->>}[r]^(.45){c_n}\dto_{h_n} & \cdots\rto & X_1\ar@{->>}[r]^(.42){c_1}\dto^{h_1} & G\dto^{h}\\
X\ar@{=}[r]\ar@/_ 15pt/[rrrr]|{c} &X_n\ar@{->>}[r]^(.45){c_n} & \cdots\rto & X_1\ar@{->>}[r]^{c_1} & G
}
\]
By the naturality of the differential we get an induced commutative  diagram:
\[\xymatrix{
H_{2\setminus 1}(G)\ar@{->>}[r]^(.55){\mu(c)}\dto_{H_{2\setminus 1}(h)} & K_c\dto^{h_n}\\
H_{2\setminus 1}(G)\ar@{->>}[r]^(.55){\mu(c)} & K_c
}\]
Commutativity of this diagram implies that $H_{2\setminus 1}(h)$ maps the kernel of
$\mu(c)$ onto itself. Since  $h$ was an arbitrary automorphism of $G$, the kernel of
$\mu(c)$ belongs to $\text{Inv}\gensub{H_{2\setminus 1}(G)}$ and consequently
$\mu(c):
H_{2\setminus 1}(G)\onto K_c$
represents an element in  $\invquot{H_{2\setminus 1}(G)}$.
\end{proof}

%%%%%%%%%%%%%%%%%%%%%%%%%%%%%%%%%%%%%%%%%%%%%%%%%%%%%%%%%%%%%
%9.9
\begin{cor}\label{cor acycsimpsurcov}
%%%%%%%%%%%%%%%%%%%%%%%%%%%%%%%%%%%%%%%%%%%%%%%%%%%%%%%%%%%%%%%%
Let $G$ be a finite group.
\begin{enumerate}
\item  If $H_{2\setminus 1}(G)$ is cyclic, then, for any positive  integer $n$:
\[\surcov{G}=\itsurcov{G}{n}=\itsurcov{G}{\infty}=\surgensub{G}.\]
\item  If $G$ is  simple, then, for any positive integer $n$:
\[\surcov{G}=
\itsurcov{G}{n}=\itsurcov{G}{\infty}.\]
\end{enumerate}
\end{cor}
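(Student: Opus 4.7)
The plan is to combine the chain of basic inclusions between iterated surjective covers/generalized subgroups with the bijective differential description from Section 6, and then use the hypotheses (cyclicity or simplicity) to collapse the chain.

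The common starting point for both parts is the following string of inclusions, valid for any finite group $G$ and any positive integer $n$:
\[
\surcov{G}\ \subset\ \itsurcov{G}{n}\ \subset\ \itsurcov{G}{\infty}\ \subset\ \itsurgensub{G}{\infty}\ =\ \surgensub{G}.
\]
The first two inclusions hold because the identity is a surjective cellular cover, so a single surjective cellular cover can be padded with $n-1$ identities into an $n$-iterated one. The third inclusion is obvious (every iterated surjective cellular cover is, in particular, an iterated surjective generalized subgroup). The final equality is Corollary~\ref{cor basicconsmth}(3). Thus in both cases it suffices to identify the two endpoints of this chain.

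For part (1), under the assumption that $H_{2\setminus 1}(G)$ is cyclic, Corollary~\ref{cor sumcyclicH2min1}(2) asserts that every surjective generalized subgroup of $G$ is already a cellular cover, i.e.\ $\surgensub{G}=\surcov{G}$. Plugging this into the chain above immediately forces all four terms to agree, which proves part (1).

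For part (2), with $G$ simple we have $H_1(G)=0$ and hence $H_{2\setminus 1}(G)=H_2(G)$. The strategy is to track the differential $\mu$ through the chain. By Theorem~\ref{thm classificationsurgsub}, $\mu\colon\surgensub{G}\to\text{Quot}(H_{2}(G))$ is a bijection. By Proposition~\ref{prop invarianceintersurcov}, the image of $\itsurcov{G}{\infty}$ (and therefore of $\itsurcov{G}{n}$) under $\mu$ is contained in $\invquot{H_{2}(G)}$. On the other hand, Theorem~\ref{thm surcovfs} says that $\mu$ restricts to a bijection from $\surcov{G}$ onto $\invquot{H_{2}(G)}$. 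Combining these facts with the chain of inclusions above yields
\[
\invquot{H_{2}(G)}\ =\ \mu(\surcov{G})\ \subset\ \mu(\itsurcov{G}{n})\ \subset\ \mu(\itsurcov{G}{\infty})\ \subset\ \invquot{H_{2}(G)},
\]
so all three images coincide. Since $\mu$ is injective on the ambient set $\surgensub{G}$, the three sets themselves coincide, giving part (2).

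The substantive content is really concentrated in the results already proved (Theorems~\ref{thm classificationsurgsub} and~\ref{thm surcovfs}, Propositions~\ref{prop invarianceintersurcov}, Corollaries~\ref{cor sumcyclicH2min1} and~\ref{cor basicconsmth}); the only care needed is to observe that, in contrast with part (1), in part (2) we do not expect $\surcov{G}=\surgensub{G}$ in general, so the argument cannot go through $\surgensub{G}$ directly but must be funnelled through the subset $\invquot{H_{2}(G)}\subset\text{Quot}(H_{2}(G))$. That is the only mild subtlety.
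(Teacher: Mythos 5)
Your proposal is correct and follows essentially the same route as the paper: part (1) collapses the inclusion chain via Corollary~\ref{cor basicconsmth}(3) together with $\surcov{G}=\surgensub{G}$ from Corollary~\ref{cor sumcyclicH2min1}(2), and part (2) sandwiches the images under the differential $\mu$ between $\mu(\surcov{G})=\invquot{H_2(G)}$ (Theorem~\ref{thm surcovfs}) and the containment from Proposition~\ref{prop invarianceintersurcov}, using injectivity of $\mu$ on $\surgensub{G}$ from Theorem~\ref{thm classificationsurgsub}. Your write-up merely makes explicit the injectivity step and the $\text{Quot}/\text{Sub}$ identification that the paper's two-line proof leaves implicit.
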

\begin{proof}
\noindent (1):\quad
This is a consequence of the inclusion $\itsurcov{G}{\infty}\subset \surgensub{G}$,
which
follows from~\ref{cor basicconsmth}(3), and  the equality   $\surcov{G}=\surgensub{G}$ (see~\ref{cor sumcyclicH2min1}(2)).
\smallskip

\noindent (2):\quad
By~\ref{prop invarianceintersurcov}, the differential $\mu:\itsurcov{G}{\infty}\xrightarrow{\mu}
\text{Inv}\gensub{H_{2\setminus 1}(G)}$ is an injection.
Since its restriction $\surcov{G}\subset\itsurcov{G}{\infty}\xrightarrow{\mu} \text{Inv}\gensub{H_{2\setminus 1}(G)}$ is a bijection
(see~\ref{thm surcovfs}) the statement follows.
\end{proof}

We finish this section with:
%%%%%%%%%%%%%%%%%%%%%%%%%%%%%%%%%%%%%%%%%%%%%%%%%%%%%%%%%%%%%%%%%
%9.10
\begin{prop}\label{prop itercovsimple}\hspace{1mm}
%%%%%%%%%%%%%%%%%%%%%%%%%%%%%%%%%%%%%%%%%%%%%%%%%%%%%%%%%%%%%%%%%
\begin{enumerate}
\item If $G$ is finitely generated nilpotent, then, for any positive integer $n$,
$\gensub{G}=\itgensub{G}{n}=\itgensub{G}{\infty}$.
\item If $A$ is finite abelian, then, for any positive integer $n$,
$\cov{A}=\itcov{A}{n}=\itcov{A}{\infty}$.
\item If $G$  is finite and simple, then $\itcov{G}{2}=\itcov{G}{\infty}$.
\end{enumerate}
\end{prop}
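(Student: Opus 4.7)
Parts (1) and (2) I will dispatch quickly using the structural results already established. For (1), by Proposition~\ref{prop presofsolnilfin}(3) every generalized subgroup of a finitely generated nilpotent group is an inclusion of a subgroup, and composing subgroup inclusions gives a subgroup inclusion, so $\itgensub{G}{n}$ collapses to $\gensub{G}$ at every stage. For (2), Proposition~\ref{prop covfabelian} identifies $\cov{A}$ with the inclusions of $k$-torsion subgroups of $A$, and a direct computation shows that the $k_2$-torsion subgroup of the $k_1$-torsion subgroup of $A$ is the $\gcd(k_1,k_2)$-torsion subgroup of $A$; hence compositions of cellular covers of $A$ remain cellular covers, and iteration stabilizes at $n=1$.

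For (3), my plan is to establish the key inclusion $\itcov{G}{3}\subset\itcov{G}{2}$; the general statement will then follow by induction on $n$, since an $(n{+}1)$-iterated cover factors as $X\to X_n\to G$ where $X_n\to G$ is $n$-iterated (hence $2$-iterated by induction), producing a $3$-iterated presentation of $X\to G$ to which the base case applies. So fix a $3$-iterated $X_3\to X_2\to X_1\to G$. Proposition~\ref{prop twofetit}(2) makes the image of the composition a fully invariant subgroup of $G$, hence $1$ or $G$ by simplicity. In the surjective case, iterating~\ref{prop propgsub}(5) forces every $c_i$ to be surjective, so~\ref{cor acycsimpsurcov}(2) puts the composition in $\surcov{G}$ and it is already $1$-iterated. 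In the trivial-composition case, after immediate reductions (a trivial $X_i$ forces $X_3=1$), I may assume $X_1\in\surcov{G}$; if $c_2$ is surjective then $X_2\in\surcov{G}$ by~\ref{cor acycsimpsurcov}(2) and $X_3\to X_2\to G$ is the desired $2$-iterated presentation, so the essential remaining subcase is when the image $I_2$ of $c_2$ is nontrivial and contained in $Z(X_1)$ (the only other option allowed by quasi-simplicity of $X_1$).

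The crucial structural step in that subcase will be to show $X_2\cong I_2$ with $c_2$ realized as the inclusion $I_2\hookrightarrow X_1$. Applying~\ref{prop propgsub}(1) to the surjective generalized subgroup $X_2\onto I_2$ yields $H_1(X_2)\cong I_2$, and hence $[X_2,X_2]=\ker c_2$. The cellular-cover condition $\Hom(X_2,\ker c_2)=0$ then reads $\Hom(I_2,[X_2,X_2])=0$, forcing $|I_2|$ and $|[X_2,X_2]|$ to be coprime. Because $X_2$ is nilpotent of class at most $2$ (its commutator subgroup lies in $\ker c_2$, which is central), the extension $[X_2,X_2]\hookrightarrow X_2\onto I_2$ is central; coprimality gives $H^2(I_2,[X_2,X_2])=0$, so it splits as a direct product. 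Thus $X_2$ is abelian, forcing $[X_2,X_2]=\ker c_2=1$ and $c_2$ injective. The cellular cover $c_3\colon X_3\to X_2=I_2$ is then, by~\ref{prop covfabelian}, the inclusion of the $k$-torsion subgroup of $I_2$ for some $k$ (which, after replacing by $\gcd(k,\exp(I_2))$, divides $\exp(I_2)$).

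To finish, I will show the composite inclusion $X_3\hookrightarrow X_1$ is itself a cellular cover of $X_1$, so that $X_3\to X_1\to G$ provides the desired $2$-iterated presentation. Given $z\in X_1$ with $z^k=1$, I will compose a surjection $I_2\twoheadrightarrow\mathbb{Z}/\exp(I_2)$, the reduction $\mathbb{Z}/\exp(I_2)\twoheadrightarrow\mathbb{Z}/k$, and the homomorphism $\mathbb{Z}/k\to\langle z\rangle\subset X_1$ sending $1$ to $z$, obtaining a homomorphism $\psi\colon I_2\to X_1$ whose image is $\langle z\rangle$. Since $I_2\hookrightarrow X_1$ is a cellular cover, $\psi$ factors through $I_2$, whence $z\in I_2$ and consequently $z\in X_3$ (as $z$ has order dividing $k$). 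Therefore every $f\colon X_3\to X_1$ has image in $X_3$, so $\Hom(X_3,X_3)=\Hom(X_3,X_1)$ and $X_3\hookrightarrow X_1$ is a cellular cover. The conceptually hardest step in the whole argument is the structural claim $X_2\cong I_2$ above; once it pins down the intermediate group, the remaining abelian-group arithmetic with cellular covers is routine.
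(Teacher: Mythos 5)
Your proof is correct, and its skeleton coincides with the paper's: parts (1) and (2) are handled the same way (you even silently correct a slip in the paper, which asserts that the $m$-torsion subgroup of the $k$-torsion subgroup of $A$ is the $\mathrm{lcm}(k,m)$-torsion subgroup, when it is of course the $\gcd(k,m)$-torsion subgroup --- either way a torsion subgroup, so the conclusion stands), and for (3) both arguments run on the dichotomy ``the image of the composite is fully invariant, hence trivial or all of $G$,'' merge surjective covers via~\ref{cor acycsimpsurcov}(2) together with~\ref{prop propgsub}(5), and reduce the remaining central case to torsion subgroups of a finite abelian kernel. The tactical differences are these: you first collapse everything to the case $n=3$ by a clean formal induction, whereas the paper inducts on $n$ directly and, in the central case, treats the whole tail $c_2\cdots c_n$ at once using part (2); you re-derive the injectivity of $c_2$ from scratch (via $\ker c_2=[X_2,X_2]$, coprimality of $|I_2|$ and $|[X_2,X_2]|$, and splitting of the resulting central extension), where the paper simply observes that $c_2$ lands in the finite abelian group $K_{c_1}$ and invokes~\ref{prop presofsolnilfin}(3); and you verify that $X_3\hookrightarrow X_1$ is a cellular cover by showing pointwise that every $k$-torsion element of $X_1$ already lies in $X_3$, where the paper instead precomposes an arbitrary $f:X_3\ra X_1$ with a surjection $X_2\onto X_3$. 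Each of your steps checks out. One small caveat: your appeal to quasi-simplicity of $X_1$ presupposes that $G$ is nonabelian (so that $H_1(X_1)\cong H_1(G)=0$ makes $X_1$ perfect); since the paper counts $\mathbf{Z}/p$ among the finite simple groups, you should dispatch that case separately, which is immediate from part (2) because every group in the chain is then abelian.
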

\begin{proof}
\noindent (1):\quad   A generalized subgroup of a finitely generated nilpotent group is an injection
(see~\ref{prop presofsolnilfin}(3)).
The statement follows from  the fact that the  composition of injections is an injection.
\smallskip

\noindent (2):\quad   If $A$ is finite abelian then, according to~\ref{prop covfabelian}, the cellular covers of $A$ are given by the $k$-torsion subgroups of $A$. The $m$-torsion subgroup of the $k$-torsion subgroup of $A$ is simply
the $\text{lcm}(k,m)$-torsion subgroup of $A$ and hence it is also a cellular cover of $A$.  Composition of two such cellular covers is then a cellular cover which shows (2).
\smallskip

\noindent (3):\quad Let $c:X\ra G$ belong to  $\itcov{G}{n}$.
We will show by induction on $n$ that $c\in \itcov{G}{2}$. If $n=2$, there is nothing to prove. Let   $n>2$. Assume that  $c$ can be expressed as a composition of  cellular covers  $X_n\xrightarrow{c_n}  \cdots\ra  X_1\xrightarrow{c_1}  G$.  Since $G$ is simple, there are two possibilities: either $X_1=0$ or $c_1$ is a
surjection. In the first case any cellular cover of $X_1$ is the trivial group too. The same holds for all $X_i$'s. Thus   $X=0$ and $c$ is the trivial homomorphism, in particular $c\in \cov{G}$.

Assume $c_1$ is a surjection.
The image of $c_1c_2:X_2\ra G$ is a fully invariant subgroup in $G$
(see~\ref{prop twofetit}(2)). Thus it is either the whole group $G$ or it is the trivial group. In the first case,
according to~\ref{prop propgsub}(5), $c_2$ is a surjection. We can then use~\ref{cor acycsimpsurcov}(2) to conclude that
the composition $c_1c_2:X_2\ra G$ is a surjective cellular cover of $G$. In this case  $c$ can be expressed as a composition of $n-1$ cellular covers.  By the inductive assumption $c\in  \itcov{G}{2}$.

The last possibility is that the image of $c_1c_2$ is the trivial group which means that the image of $c_2$
sits in the kernel $K_{c_1}$ of $c_1$.
We will prove that  the composition $c_2c_3\cdots c_n:X\ra X_1$ is a cellular cover.
In this way we can express $c$ as a composition of two cellular covers proving  the statement.

The kernel  $K_{c_1}$ is a finite abelian group.  Since $c_2:X_2\ra K_{c_1}$
is also a cellular cover, $c_2$ must be an injection and consequently $X_2$ is an abelian group. We can then use  (2) to get that the composition  $c_3\cdots c_n:X\ra  X_2$ is a cellular cover, which in particular means that it is an  injection.
Let $f:X\ra X_1$ be an arbitrary homomorphism. As  $X$ is (isomorphic to) the $k$-torsion subgroup (for some $k$)
of the finite abelian group $X_2$, there is a surjection
$\pi:X_2\onto X$. Because  $c_2:X_2\subset X_1$ is a cellular cover, the image of the composition $\pi f:X_2\ra X_1$ sits in the image of $c_2$.
Thus $f$ has to map $X$ into the image of $c_2$. The fact that $c_3\cdots c_n:X\ra  X_2$  is a cellular cover implies that $f$
has to map $X$   into the image of $c_3\cdots c_n$. This means that  $f$ factors through $c_2c_3\cdots c_n:X\ra X_1$, proving that the later homomorphism is a cellular cover.
\end{proof}

The statement (3) in~\ref{prop itercovsimple} can not be made stronger. For example, in the case of
the group $\text{PSL}_2(q)$ where $q$ is a power of an odd prime different from $3$ and $9$, there is a proper inclusion $\cov{\text{PSL}_2(q)}\subsetneq \itcov{\text{PSL}_2(q)}{2}=
\itcov{\text{PSL}_2(q)}{\infty}$.

%%%%%%%%%%%%%%%%%%%%%%%%%%%%%%%%%%%%%%%%%%%%%%%%%%%%%%%%%%%%%%%%
%%%%%%%%%%%%%%%%%%%%%%%%%%%%%%%%%%%%%%%%%%%%%%%%%%%%%%%%%%%%%%%%%
%%%%%%%%%%%%%%%%%%%%%%%%%%%%%%%%%%%%%%%%%%%%%%%%%%%%%%%%%%%%%%%%%
%section10
\section{Iterating idempotent functors}
%%%%%%%%%%%%%%%%%%%%%%%%%%%%%%%%%%%%%%%%%%%%%%%%%%%%%%%%%%%%%%%%%
%%%%%%%%%%%%%%%%%%%%%%%%%%%%%%%%%%%%%%%%%%%%%%%%%%%%%%%%%%%%%%
%%%%%%%%%%%%%%%%%%%%%%%%%%%%%%%%%%%%%%%%%%%%%%%%%%%%%%%%%%
Recall that according to~\ref{prop identifyingidemp}
the function {\bf F} that assigns to an equivalence class of a cellular cover, represented by $c:X\ra G$, the group $X$ induces a {\bf bijection} between $\cov{G}$  and
$\text{Idem}(G)$.   This means that if $X$ represents an element in $\text{Idem}(G)$, then up to an isomorphism of $X$, there is a unique homomorphism $c:X\ra G$ which is a cellular cover of $G$.

For any positive integer $n$, the function {\bf F} induces a surjection (which we also call {\bf F})
from  $\itcov{G}{n}$  onto
$\text{Idem}^{n}(G)$.   This  surjectivity and~\ref{cor basicconsmth}(1) gives:
\begin{cor}\label{cor ideminffin}
If $G$   is a finite, then $\text{\rm  Idem}^{\infty}(G)$ is a finite set.
\end{cor}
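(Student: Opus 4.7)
The plan is straightforward given the material that immediately precedes the statement: I would realize $\text{Idem}^\infty(G)$ as the image, under the assignment $\mathbf{F}$ sending a cellular cover $c:X\to G$ to the isomorphism class of $X$, of the finite set $\itcov{G}{\infty}$ furnished by Corollary~\ref{cor basicconsmth}(1).

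The first step is to verify carefully that $\mathbf{F}:\itcov{G}{n}\twoheadrightarrow \text{Idem}^n(G)$ is well-defined and surjective for every positive integer $n$. In one direction, given any sequence of idempotent functors $(\phi_1,\epsilon_1),\dots,(\phi_n,\epsilon_n)$, Proposition~\ref{prop identifyingidemp} ensures that each augmentation $\epsilon^{(i)}:\phi_i\phi_{i-1}\cdots\phi_1(G)\to \phi_{i-1}\cdots\phi_1(G)$ is a cellular cover of its target. Composing these augmentations yields an $n$-iterated cellular cover of $G$ whose domain is $\phi_n\cdots\phi_1(G)$, so every element of $\text{Idem}^n(G)$ lies in the image. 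Conversely, any composition $X_n\xrightarrow{c_n}\cdots\xrightarrow{c_1}G$ of cellular covers is, by the remark recalled in Section~2, the sequence of augmentations $c_{X_i,X_{i-1}}$ for the idempotent cellularization functors $(\cell_{X_i},c_{X_i})$, so $X_n\cong\cell_{X_n}\cdots\cell_{X_1}(G)$ represents an element of $\text{Idem}^n(G)$.

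Taking unions over $n$, the maps $\mathbf{F}$ assemble into a single surjection $\itcov{G}{\infty}\twoheadrightarrow\text{Idem}^\infty(G)$. Corollary~\ref{cor basicconsmth}(1) asserts that the domain is a finite set, so its image $\text{Idem}^\infty(G)$ is finite as well, which is the desired conclusion.

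There is no real obstacle in carrying this out: the surjectivity relies only on the already-established dictionary between cellular covers and values of idempotent functors (Proposition~\ref{prop identifyingidemp}), and the substantive finiteness input — that an $n$-iterated cellular cover of a finite group is controlled by the composition length of $G$ via Theorem~\ref{thm iteratedgsubfinite} — has already been extracted in Corollary~\ref{cor basicconsmth}(1). In other words, the corollary is essentially a bookkeeping statement unpacking the surjection noted just above the statement.
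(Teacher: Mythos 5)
Your proposal is correct and follows exactly the paper's argument: the assignment $\mathbf{F}$ gives a surjection $\itcov{G}{n}\onto\text{\rm Idem}^n(G)$ via Proposition~\ref{prop identifyingidemp}, and finiteness then follows from Corollary~\ref{cor basicconsmth}(1). The only difference is that you spell out the well-definedness and surjectivity of $\mathbf{F}$ in more detail than the paper, which simply asserts the surjection.
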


In general we do not know if  the   function {\bf F} induces a {\bf bijection}  between $\itcov{G}{n}$
and $\text{Idem}^{n}(G)$. We do not know if, for a group $X$, that represents an element in
$ \text{Idem}^{n}(G)$, there is a unique, up to an isomorphism of $X$, homomorphism $f:X\ra G$ which is a composition of a sequence of $n$ cellular covers.  We can show however that this   is true  for finite simple groups:

\begin{prop}\label{prop idemiterfinsimple}
Let $G$ be a finite simple group. Then the function that assigns to an element in
$\itcov{G}{2}$, represented by $f:X\ra G$, the element in $\text{\rm  Idem}^{\infty}(G)$, represented by $X$, is a bijection.
\end{prop}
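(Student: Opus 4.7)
The plan is to prove surjectivity and injectivity of the map separately. First, the function is well defined because, by definition, the domain of any iterated cellular cover of $G$ represents an element of $\text{\rm Idem}^{n}(G)\subset\text{\rm Idem}^{\infty}(G)$.

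For surjectivity, I would invoke the surjectivity of $\mathbf{F}\colon\itcov{G}{n}\to\text{\rm Idem}^{n}(G)$ for every $n$ (noted immediately before Corollary~\ref{cor ideminffin}) together with the equality $\itcov{G}{\infty}=\itcov{G}{2}$ from Proposition~\ref{prop itercovsimple}(3). Since $\text{\rm Idem}^{\infty}(G)=\bigcup_{n\geq 1}\text{\rm Idem}^{n}(G)$, every class in $\text{\rm Idem}^{\infty}(G)$ is realized by some $f\in\itcov{G}{n}\subset\itcov{G}{2}$.

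For injectivity, suppose $f\colon X\to G$ and $f'\colon X'\to G$ represent classes in $\itcov{G}{2}$ with $X\cong X'$. After replacing $f'$ by its precomposition with an isomorphism $X\to X'$, which does not alter its class in $\itcov{G}{2}$, I may assume $X'=X$; the goal is then to exhibit an automorphism $h$ of $X$ with $f'h=f$. By Proposition~\ref{prop twofetit}(2) the image of any iterated cellular cover is fully invariant in $G$, so by simplicity of $G$ each of $f$ and $f'$ is either trivial or surjective. The key lemma I would establish is that the isomorphism type of $X$ alone determines the alternative: writing $f=c_{1}c_{2}$ with $c_{i}$ cellular covers, if $f$ is surjective then $c_{1}$ must be surjective and Proposition~\ref{prop propgsub}(5) forces $c_{2}$ to be surjective too, so by Corollary~\ref{cor acycsimpsurcov}(2) the composition $f$ itself lies in $\surcov{G}$ and Proposition~\ref{prop propgsub}(1) gives $H_{1}(X)\cong H_{1}(G)=0$, whence $X$ is perfect; while if $f$ is trivial and $X\neq 0$, an argument parallel to that in the proof of Proposition~\ref{prop itercovsimple}(3) realizes $X$ as a cellular cover of the abelian central kernel $K_{c_{1}}$, forcing $X$ to be abelian.

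With this dichotomy, whenever $X$ is perfect and nontrivial both $f$ and $f'$ must be surjective cellular covers of $G$, and Lemma~\ref{lem isoAB}(2) delivers the required automorphism $h$. In every other case ($X=0$ or $X$ nontrivial abelian), $f$ and $f'$ are forced to be the trivial homomorphism, so they coincide as maps and any automorphism of $X$ works. I expect the main technical point to be the perfect-versus-abelian dichotomy for $X$, which rules out mixing a surjective $f$ with a trivial $f'$; once that dichotomy is secured, injectivity reduces cleanly to an application of Lemma~\ref{lem isoAB}(2).
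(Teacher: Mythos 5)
Your argument for non-abelian $G$ is essentially the paper's own proof. The paper likewise obtains surjectivity from $\itcov{G}{2}=\itcov{G}{\infty}$ (Proposition~\ref{prop itercovsimple}(3)), and proves injectivity by the same dichotomy on a non-trivial $X$: if $X$ is abelian, only the trivial homomorphism $X\ra G$ can occur in $\itcov{G}{2}$; if $X$ is non-abelian, the composition $c_1c_2$ must be non-trivial, hence surjective, Proposition~\ref{prop propgsub}(5) forces $c_2$ to be surjective, Corollary~\ref{cor acycsimpsurcov}(2) makes $c_1c_2$ an honest surjective cellular cover, and uniqueness up to an automorphism of $X$ follows. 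Your ``perfect versus abelian'' packaging of this dichotomy is a harmless reformulation, and your use of Lemma~\ref{lem isoAB}(2) at the end is exactly what the paper's appeal to cellular covers amounts to.

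There is, however, one step that fails as written: you never separate out the abelian simple groups $G\cong {\mathbf Z}/p$, and for them your dichotomy is false. When $G={\mathbf Z}/p$ we have $H_1(G)\neq 0$, so a surjective $f$ does not force $H_1(X)=0$ (indeed $X\cong{\mathbf Z}/p$ is abelian), and your concluding clause ``in every other case ($X=0$ or $X$ nontrivial abelian), $f$ and $f'$ are forced to be the trivial homomorphism'' is simply wrong there: the identity ${\mathbf Z}/p\ra{\mathbf Z}/p$ is a non-trivial element of $\itcov{G}{2}$ with non-trivial abelian domain. The hidden reason is that the implication ``$X$ abelian and $f$ surjective $\Rightarrow$ $G$ abelian, contradiction'' only yields a contradiction when $G$ is non-abelian. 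The paper dispatches this case in one line at the start of its proof, via Proposition~\ref{prop itercovsimple}(2) (for finite abelian $A$ one has $\cov{A}=\itcov{A}{\infty}$) together with Proposition~\ref{prop identifyingidemp}; adding that case split makes your argument complete.
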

\begin{proof}
If $G$ is abelian, then the proposition follows from~\ref{prop identifyingidemp} since in this case
$\cov{G}=\itcov{G}{\infty}$
(see~\ref{prop itercovsimple}(2)).

Assume $G$   is not abelian.
According to~\ref{prop itercovsimple}(3),  $\itcov{G}{2}=\itcov{G}{\infty}$. Thus the
  function {\bf F}
 between $\itcov{G}{2}$ and $\text{\rm  Idem}^{\infty}(G)$ is  surjective.

 Let $X\in \text{\rm  Idem}^{\infty}(G)$  be a  non-trivial group.
There are two possibilities: $X$ is abelian or not. In the first case we claim that the trivial homomorphism from   $X$ to $G$
is the only homomorphism that represents an element in  $\itcov{G}{2}$.  Assume that this is not the case.
Then there are two cellular covers $c_2:X\ra X_1$ and $c_1:X_1\ra G$ whose composition $c_1c_2:X\ra G$ is non trivial.
As the image of such a composition is a fully invariant subgroup of $G$ (see~\ref{prop twofetit}), $c_1c_2$  has to be  a surjection, which
can only happen in the case $G$ is abelian.

Assume that $X$ is not abelian. If the composition $c_1c_2$ were trivial, then $c_2:X\ra K_{c_1}$
would be a cellular cover
of the kernel $K_{c_1}$ which would require $X$ to be abelian. The composition $c_1c_2$ is therefore non-trivial and hence it has to be a surjection.
The homomorphism $c_1$ is then also a surjection.  By~\ref{prop propgsub}(5) we can  conclude that $c_2$ is a surjection too.  The composition $c_1c_2$ is then a cellular
cover (see~\ref{cor acycsimpsurcov}(2)), and hence the homomorphism $c_1c_2$ is unique, up to an automorphism of $X$.
\end{proof}

%%%%%%%%%%%%%%%%%%%%%%%%%%%%%%%%%%%%%%%%%%%%%%%%%%%%%%%%%%%%%%%%%%%
%%%%%%%%%%%%%%%%%%%%%%%%%%%%%%%%%%%%%%%%%%%%%%%%%%%%%%%%%%%%%%%%%%%
%%%%%%%%%%%%%%%%%%%%%%%%%%%%%%%%%%%%%%%%%%%%%%%%%%%%%%%%%%%%%%%%%%
%section11
\section{Explicit examples}
%%%%%%%%%%%%%%%%%%%%%%%%%%%%%%%%%%%%%%%%%%%%%%%%%%%%%%%%%%%%%%%%%%%
%%%%%%%%%%%%%%%%%%%%%%%%%%%%%%%%%%%%%%%%%%%%%%%%%%%%%%%%%%%%%%%%%%
%%%%%%%%%%%%%%%%%%%%%%%%%%%%%%%%%%%%%%%%%%%%%%%%%%%%%%%%%%%%%%
In this section we will illustrate Theorem~\ref{thm surcovfs} and its Corollary~\ref{cor covsimple}.
We let $G$ denote a  finite simple group. We use the symbol $\text{exp}(H_2(G))$ to denote the exponent of $H_2(G)$ and
$\sigma_{0}(G)$ the number of different positive divisors of $\text{exp}(H_2(G))$.
Recall that the exponent of a finite abelian group $A$ is the least positive integer $k$ for which $kA=0$.
Let
 $e_G:E\onto G$ be  the  universal central extension of $G$.
The center of $E$ is isomorphic to $H_2(G)$.

According to~\ref{cor covsimple}, $\text{Idem}(G)$ is in bijection with the set
$\{0\}\coprod\invquot{H_{2}(G)}$. Explicitly,  the element $0$ corresponds to the trivial group in   $\text{Idem}(G)$. Any non-trivial
 element in $\text{Idem}(G)$ is the quotient of $E$ by an $\text{Out}(G)$-invariant subgroup of its center   $H_2(G)$. A basic example of such a subgroup
is given by the $k$-torsion subgroup for some $k$ dividing the exponent  $\text{exp}(H_2(G))$ of  $H_2(G)$ (see~\ref{cor cyclictrivialaction}(1)).
The number of such basic invariant subgroups of $H_2(G)$ is therefore given by $\sigma_{0}(G)$.
Thus the set  $\text{Idem}(G)$ contains at least $\sigma_{0}( H_2(G))+1$ elements.
The question is if there are  any other invariant subgroups of $H_2(G)$?  For example in the case
$H_2(G)$ is cyclic,  since the action of $\text{Out}(G)$ on $\gensub{H_2(G)}$ is trivial  (see~\ref{cor cyclictrivialaction}),
all the subgroups of $H_2(G)$ are invariant. In this case the set $\text{Idem}(G)$ has exactly $\sigma_{0}( H_2(G))+1$ elements.
{\em It turns  out that this happens for almost all simple groups.} The only exceptions are the  groups  $D_n(q)$ $(n\geq 3)$ for $q$ odd and $n$ even. In this case the Schur multiplier is  ${\mathbf Z}/2\oplus {\mathbf Z}/2$  and hence  its exponent is $2$ and
consequently  $\sigma_{0}(D_n(q))=2$. However, the number of invariant subgroups in the Schur multiplier turns out to be  $3$ and hence
 $\text{Idem}(D_n(q))$ has 4 elements.

%%%%%%%%%%%%%%%%%%%%%%%%%%%%%%%%%%%%%%%%%%%%%%%%%%%%%%%%%%%%%%%%%%%%%%%%
%
 \begin{prop}\label{prop listoffsg}
%%%%%%%%%%%%%%%%%%%%%%%%%%%%%%%%%%%%%%%%%%%%%%%%
 The following table  lists  the size of $\text{\rm Idem}(G)$ (fifth column) for all finite simple groups $G$.
In the first column the boxed entries contain the names of the groups with restrictions on relevant indices that are required for the groups to be simple or to avoid some repetition.  The notation is taken from~\cite{GLS1}.   The constrains below the boxes
 distinguish between different Schur multipliers.  Schur multipliers are the content of the second column. The third column contains  $\text{\rm exp}(H_2(G)$  and the forth $\sigma_0(G)$.
 In the first column, we write
 $\bullet$\fbox{$G$}$\bullet$ to denote these groups $G$ for which the Schur multiplier is not cyclic and yet
 $\text{\rm Idem}(G)$ has $\sigma_{0}( H_2(G))+1$ elements.
 We use $\bullet$$\bullet$\fbox{$G$}$\bullet$$\bullet$ to  denote the cases for which
 $\text{\rm Idem}(G)$ has  more than $\sigma_{0}( H_2(G))+1$  elements.
\renewcommand{\arraystretch}{1.2}
\begin{center}
\begin{tabular}[m]{c|c|c|c|c}
 \hline
$G$ & $H_2(G)$ & $\text{\rm exp}(H_2(G))$ & $\sigma_0(G)$ & $|\text{\rm Idem}(G)|$\\
\hline
%%%%%%%%%%%%%%%%%%%%%%%%%%%%%%%%%%
\multicolumn{5}{c}{Cyclic groups of prime order}\\ \hline
%%%%%%%%%%%%%%%%%%%%%%%%%%%%%%%%%%
 \fbox{$ {\mathbf Z}/p$} & $0$ & $1$ & $1$ & $2$\\ \hline
 %%%%%%%%%%%%%%%%%%%%%%%%%%%%%%%%%%
\multicolumn{5}{c}{Alternating groups}\\ \hline
%%%%%%%%%%%%%%%%%%%%%%%%%%%%%%%%%%
\parbox[m]{3.7cm}{ \center{\fbox{$A_n$, $n\geq 5$} \\ $n\not=6$, $n\not=7$}}&  ${\mathbf Z}/2$ & $2$ & $2$ & $3$\\
\hline
\parbox[m]{3.7cm}{\center{\fbox{$A_6$, $A_7$}} }&${\mathbf Z}/6$ & $6$ & $4$ & $5$\\
\hline
%%%%%%%%%%%%%%%%%%%%%%%%%%%%%%%%%%
\multicolumn{5}{c}{Linear groups}\\ \hline
%%%%%%%%%%%%%%%%%%%%%%%%%%%%%%%%%%
\parbox[m]{3cm}{\center{\fbox{\parbox[m]{2.6cm}{\center{$\text{A}_n(q)$,
 $n\geq 1$ \\  $(n,q)\not=(1,2)$ $(n,q)\not=(1,3)$}}}\\
 $(n,q)\not=(1,4)$ $(n,q)\not=(1,9)$ $(n,q)\not=(2,2)$ $(n,q)\not=(2,4)$ $(n,q)\not=(3,2)$}}&
 ${\mathbf Z}/(n+1,q-1)$ & $(n+1,q-1)$ & $\sigma_0(n+1,q-1)$& $\sigma_0+1$\\
\hline
\parbox[m]{3.7cm}{\center{\fbox{$\text{A}_3(2)$}}}&
 ${\mathbf Z}/2$ & $2$ & $2$& $3$\\
\hline
\parbox[m]{3cm}{\center{$\bullet$ {\fbox{$\text{A}_2(4)$}}} $\bullet$}&
 ${\mathbf Z}/4\oplus {\mathbf Z}/4\oplus {\mathbf Z}/3$ & $12$ & $6$& $7$\\
\hline
\end{tabular}
%\end{center}

%\nopagebreak

%\begin{center}

\begin{tabular}[m]{c|c|c|c|c}
 \hline
$G$ & $H_2(G)$ & $\text{\rm exp}(H_2(G))$ & $\sigma_0(G)$ & $|\text{\rm Idem}(G)|$\\
\hline
%%%%%%%%%%%%%%%%%%%%%%%%%%%%%%%%%%
\multicolumn{5}{c}{Unitary  groups}\\ \hline
%%%%%%%%%%%%%%%%%%%%%%%%%%%%%%%%%%
\parbox[m]{3cm}{\center{\fbox{\parbox[m]{2.6cm}{\center{$^{2}\text{A}_n(q)$,
 $n\geq 2$ $(n,q)\not=(2,2)$}}}\\
 $(n,q)\not=(3,2)$ $(n,q)\not=(3,3)$ $(n,q)\not=(5,2)$}}&
 ${\mathbf Z}/(n+1,q+1)$ & $(n+1,q+1)$ & $\sigma_0(n+1,q+1)$& $\sigma_0+1$\\
\hline
\parbox[m]{3cm}{\center{\fbox{$^{2}\text{A}_3(2)$}}}&
 ${\mathbf Z}/2$ & $2$ & $2$& $3$ \\
\hline
\parbox[m]{3cm}{\center{$\bullet$ {\fbox{$^2\text{A}_3(3)$}}} $\bullet$}&
 ${\mathbf Z}/4\oplus {\mathbf Z}/3\oplus {\mathbf Z}/3$ & $12$ & $6$& $7$\\
\hline
\parbox[m]{3cm}{\center{$\bullet$  {\fbox{$^2\text{A}_5(2)$}}} $\bullet$}&
 ${\mathbf Z}/2\oplus {\mathbf Z}/2\oplus {\mathbf Z}/3$ & $6$ & $4$& $5$\\
\hline
%%%%%%%%%%%%%%%%%%%%%%%%%%%%%%%%%%
\multicolumn{5}{c}{Orthogonal  groups of type B}\\ \hline
%%%%%%%%%%%%%%%%%%%%%%%%%%%%%%%%%%
\parbox[m]{3cm}{\center{\fbox{\parbox[m]{2.6cm}{\center{$\text{B}_{n}(q)$, $n\geq 2$ $(n,q)\not= (2,2)$}}}\\
$q$ odd,
$(n,q)\not=(3,3)$}}&
 ${\mathbf Z}/2$ & $2$ & $2$& $3$\\ \hline
 \parbox[m]{3cm}{\center{\fbox{$\text{B}_{3}(3)$}} }&
 ${\mathbf Z}/6$ & $6$ & $4$& $5$\\
\hline
\parbox[m]{3cm}{\center{\fbox{\parbox[m]{2.6cm}{\center{$\text{B}_{n}(q)$,  $n\geq 2$\\
$(n,q)\not= (2,2)$}}} \\
$q$ even,
$(n,q)\not= (3,2)$}}
&
 $0$ & $1$ & $1$& $2$\\ \hline
 \parbox[m]{3cm}{\center{\fbox{$\text{B}_{3}(2)$}}}&
 ${\mathbf Z}/2$ & $2$ & $2$& $3$\\
\hline
%%%%%%%%%%%%%%%%%%%%%%%%%%%%%%%%%%
\multicolumn{5}{c}{Suzuki Groups}\\ \hline
%%%%%%%%%%%%%%%%%%%%%%%%%%%%%%%%%%
\parbox[m]{4cm}{\center{\fbox{$^2\text{B}_{2}(2^{2n+1})$, $n\geq 1$} \\ $n> 1$}}&
 $0$ & $1$ & $1$& $2$\\
\hline
\parbox[m]{4cm}{\center{$\bullet$ \fbox{$^2\text{B}_2(8)$}} $\bullet$}&
 ${\mathbf Z}/2\oplus {\mathbf Z}/2$& $2$ & $2$& $3$\\
\hline
%%%%%%%%%%%%%%%%%%%%%%%%%%%%%%%%%%
\multicolumn{5}{c}{Symplectic  groups}\\ \hline
%%%%%%%%%%%%%%%%%%%%%%%%%%%%%%%%%%
\parbox[m]{4cm}{\center{\fbox{$\text{C}_{n}(q)$, $n\geq 3$,
$q$ odd}}}&
 ${\mathbf Z}/2$ & $2$ & $2$& $3$\\
\hline
%%%%%%%%%%%%%%%%%%%%%%%%%%%%%%%%%%
\multicolumn{5}{c}{Orthogonal  groups of type D}\\ \hline
%%%%%%%%%%%%%%%%%%%%%%%%%%%%%%%%%%
\parbox[m]{4cm}{\center{\fbox{$\text{D}_{n}(q)$, $n\geq 4$}\\
$q$ even, $(n,q)\not=(4,2)$}}&
 $0$ & $1$ & $1$& $2$\\ \hline
 \parbox[m]{4cm}{\center{$\bullet$ \fbox{$\text{D}_{4}(2)$}} $\bullet$}&
 ${\mathbf Z}/2\oplus {\mathbf Z}/2$ & $2$ & $2$& $3$\\ \hline
\parbox[m]{4cm}{\center{\fbox{$\text{D}_{n}(q)$, $n\geq 5$} \\
$q$ odd, $n$ odd }}&
 ${\mathbf Z}/4$ & $4$ & $3$& $4$\\ \hline
 \end{tabular}
%\end{center}

%\nopagebreak

%\begin{center}

\begin{tabular}[m]{c|c|c|c|c}
 \hline
$G$ & $H_2(G)$ & $\text{\rm exp}(H_2(G))$ & $\sigma_0(G)$ & $|\text{\rm Idem}(G)|$\\
\hline
%%%%%%%%%%%%%%%%%%%%%%%%%%%%%%%%%%
\multicolumn{5}{c}{Orthogonal  groups of type D}\\ \hline
%%%%%%%%%%%%%%%%%%%%%%%%%%%%%%%%%%
 \parbox[m]{4cm}{ \center{$\bullet$$\bullet$ \fbox{$\text{D}_{n}(q)$, $n\geq 4$} $\bullet$$\bullet$\\
$q$ odd, $n$ even }}&
 ${\mathbf Z}/2\oplus {\mathbf Z}/2$ & $2$ & $2$& $4$\\ \hline
\parbox[m]{4cm}{\center{\fbox{$^2\text{D}_{n}(q)$, $n\geq 4$} \\
$q$ even}}&
 $0$ & $1$ & $1$& $2$\\ \hline
\parbox[m]{4cm}{\center{\fbox{$^2\text{D}_{n}(q)$, $n\geq 4$}\\
$q$ odd}}&
 ${\mathbf Z}/2$ & $2$ & $2$& $3$\\ \hline
 %%%%%%%%%%%%%%%%%%%%%%%%%%%%%%%%%%
\multicolumn{5}{c}{Exceptional groups of Lie type}\\ \hline
%%%%%%%%%%%%%%%%%%%%%%%%%%%%%%%%%%
\parbox[m]{4cm}{\center{\fbox{$\text{G}_{2}(q)$, $q\not=2$} \\
$q\not= 3$, $q\not= 4$}}&
 $0$ & $1$ & $1$& $2$\\ \hline
\parbox[m]{4cm}{\center{\fbox{$\text{G}_{2}(3)$}}}&
 ${\mathbf Z}/3$  & $3$ & $2$& $3$\\ \hline
 \parbox[m]{4cm}{\center{\fbox{$\text{G}_{2}(4)$}}}&
 ${\mathbf Z}/2$  & $2$ & $2$& $3$\\ \hline
  \parbox[m]{4cm}{\center{\fbox{$^2\text{G}_{2}(3^{2n+1})$, $n\geq 1$}}}&
 $0$  & $1$ & $1$& $2$\\ \hline
   \parbox[m]{4cm}{\center{\fbox{$\text{F}_{4}(q)$} \\
  $q\not=2$}}&
 $0$  & $1$ & $1$& $2$\\ \hline
   \parbox[m]{4cm}{\center{\fbox{$\text{F}_{4}(2)$}}}&
  ${\mathbf Z}/2$  & $2$ & $2$& $3$\\ \hline
    \parbox[m]{4cm}{\center{\fbox{$^{2}\text{F}_{2}(2^{2n+1})$, $n\geq 1$}}}&
 $0$  & $1$ & $1$& $2$\\ \hline
    \parbox[m]{4cm}{\center{
  \fbox{$^{2}\text{F}_{2}(2)'$}}}&
 $0$  & $1$ & $1$& $2$\\ \hline
      \parbox[m]{4cm}{\center{
  \fbox{$\text{E}_{6}(q)$} \\
  $q\not\equiv 1\text{ mod } 3$}}&
 $0$  & $1$ & $1$& $2$\\ \hline
      \parbox[m]{4cm}{\center{
  \fbox{$\text{E}_{6}(q)$}\\
  $q\equiv 1\text{ mod } 3$}}&
 ${\mathbf Z}/3$  & $3$ & $2$& $3$\\ \hline
       \parbox[m]{4cm}{\center{\fbox{$^2\text{E}_{6}(q)$}\\ $q\not\equiv -1\text{ mod } 3$}}&
 $0$  & $1$ & $1$& $2$\\ \hline
      \parbox[m]{4cm}{\center{\fbox{$^2\text{E}_{6}(q)$} \\ $q\equiv -1\text{ mod } 3$, $q\not= 2$}}&
 ${\mathbf Z}/3$  & $3$ & $2$& $3$\\ \hline
      \parbox[m]{4cm}{\center{$\bullet$ \fbox{$^2\text{E}_{6}(2)$} $\bullet$}}&
 ${\mathbf Z}/2\oplus {\mathbf Z}/2\oplus {\mathbf Z}/3$  & $6$ & $4$& $5$\\ \hline
        \parbox[m]{4cm}{\center{\fbox{$\text{E}_{7}(q)$}\\
      $q$ even}}&
 $0$  & $1$ & $1$& $2$\\ \hline
       \parbox[m]{4cm}{\center{\fbox{$\text{E}_{7}(q)$}\\
      $q$ odd}}&
 ${\mathbf Z}/2$  & $ 2$ & $2$& $3$\\ \hline
         \parbox[m]{4cm}{\center{\fbox{$\text{E}_{8}(q)$}}}&
 $0$  & $1$ & $1$& $2$\\ \hline
  \end{tabular}
\end{center}

\begin{center}
\begin{tabular}[m]{c|c|c|c|c}
 \hline
 $G$ & $H_2(G)$ & $\text{\rm exp}(H_2(G))$ & $\sigma_0(G)$ & $|\text{\rm Idem}(G)|$\\ \hline
   %%%%%%%%%%%%%%%%%%%%%%%%%%%%%%%%%%
\multicolumn{5}{c}{Sporadic groups}\\ \hline
%%%%%%%%%%%%%%%%%%%%%%%%%%%%%%%%%%
       \parbox[m]{4cm}{\center{\fbox{$M_{11}$}}}&
 $0$  & $1$ & $1$& $2$\\ \hline
        \parbox[m]{4cm}{\center{\fbox{$M_{12}$}}}&
 ${\mathbf Z}/2$  & $2$ & $2$& $3$\\ \hline
        \parbox[m]{4cm}{\center{\fbox{$M_{22}$}}}&
 ${\mathbf Z}/12$  & $12$ & $6$& $7$\\ \hline
        \parbox[m]{4cm}{\center{\fbox{$M_{23}$, $M_{24}$, $J_{1}$}}}&
 $0$  & $1$ & $1$& $2$\\ \hline
        \parbox[m]{4cm}{\center{\fbox{$J_{2}$}}}&
 ${\mathbf Z}/2$  & $2$ & $2$& $3$\\ \hline
        \parbox[m]{4cm}{\center{\fbox{$J_{3}$}}}&
 ${\mathbf Z}/3$  & $3$ & $2$& $3$\\ \hline
         \parbox[m]{4cm}{\center{\fbox{$J_{4}$}}}&
 $0$  & $1$ & $1$& $2$\\ \hline
         \parbox[m]{4cm}{\center{\fbox{HS}}}&
 ${\mathbf Z}/2$  & $2$ & $2$& $3$\\ \hline
          \parbox[m]{4cm}{\center{\fbox{He}}}&
 $0$  & $1$ & $1$& $2$\\ \hline
        \parbox[m]{4cm}{\center{\fbox{Mc}}}&
 ${\mathbf Z}/3$  & $3$ & $2$& $3$\\ \hline
         \parbox[m]{4cm}{\center{\fbox{Suz}}}&
 ${\mathbf Z}/6$  & $6$ & $4$& $5$\\ \hline
           \parbox[m]{4cm}{\center{\fbox{Ly}}}&
 $0$  & $1$ & $1$& $2$\\ \hline
         \parbox[m]{4cm}{\center{\fbox{Ru}}}&
 ${\mathbf Z}/2$  & $2$ & $2$& $3$\\ \hline
        \parbox[m]{4cm}{\center{\fbox{O'N}}}&
 ${\mathbf Z}/3$  & $3$ & $2$& $3$\\ \hline
        \parbox[m]{4cm}{\center{\fbox{$\text{Co}_1$}}}&
 ${\mathbf Z}/2$  & $2$ & $2$& $3$\\ \hline
           \parbox[m]{4cm}{\center{\fbox{$\text{Co}_2$, $\text{Co}_3$}}}&
 $0$  & $1$ & $1$& $2$\\ \hline
       \parbox[m]{4cm}{\center{\fbox{$\text{Fi}_{22}$}}}&
 ${\mathbf Z}/6$  & $6$ & $4$& $5$\\ \hline
        \parbox[m]{4cm}{\center{\fbox{$\text{Fi}_{23}$}}}&
 $0$  & $1$ & $1$& $2$\\ \hline
       \parbox[m]{4cm}{\center{\fbox{$\text{Fi}_{24}'$}}}&
 ${\mathbf Z}/3$  & $3$ & $2$& $3$\\ \hline
        \parbox[m]{4cm}{\center{\fbox{$\text{F}_{5}$, $\text{F}_{3}$}}}&
 $0$  & $1$ & $1$& $2$\\ \hline
        \parbox[m]{4cm}{\center{\fbox{$\text{F}_{2}$}}}&
 ${\mathbf Z}/2$  & $2$ & $2$& $3$\\ \hline
        \parbox[m]{4cm}{\center{\fbox{$\text{F}_{1}$}}}&
 $0$  & $1$ & $1$& $2$\\ \hline
 \end{tabular}
\end{center}
 \end{prop}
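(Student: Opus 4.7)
By Corollary~\ref{cor covsimple}, the size of $\text{\rm Idem}(G)$ equals $1+|\text{\rm Inv}\text{\rm Sub}(H_{2}(G))|$, so the entire table reduces to computing, for each finite simple group $G$, the Schur multiplier $H_{2}(G)$ (columns 2--4) and then the number of $\text{\rm Out}(G)$-invariant subgroups of $H_{2}(G)$ (column 5). The second, third and fourth columns are bookkeeping: the Schur multipliers of all finite simple groups have been tabulated (see e.g.\ \cite{GLS1, Ka}) and from them the exponent and the divisor function $\sigma_{0}$ are elementary. So the whole problem is the fifth column, and I would proceed in three layers.

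First layer (the baseline $\sigma_{0}+1$). For every positive divisor $k$ of $\exp(H_{2}(G))$, the $k$-torsion subgroup of $H_{2}(G)$ is characteristic and hence invariant under $\text{\rm Out}(G)$ by Corollary~\ref{cor cyclictrivialaction}(1). These $\sigma_{0}(G)$ subgroups are pairwise distinct, so together with the extra element $0$ coming from the trivial cellular cover we always have $|\text{\rm Idem}(G)|\ge \sigma_{0}(G)+1$. This proves the inequality in every row at once.

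Second layer (cyclic $H_{2}(G)$). By Corollary~\ref{cor cyclictrivialaction}(2), when $H_{2}(G)$ is cyclic the $\text{\rm Out}(G)$-action on $\text{\rm Sub}(H_{2}(G))$ is trivial, and the subgroups of a cyclic group of order $n$ are in bijection with the positive divisors of $n=\exp(H_{2}(G))$. So for every row in which the second column is cyclic the baseline is tight: $|\text{\rm Idem}(G)|=\sigma_{0}(G)+1$. Inspection of the tables in \cite{GLS1} shows that this covers all rows except the handful marked with $\bullet$ or $\bullet\bullet$, namely $A_{2}(4)$, $^{2}A_{3}(3)$, $^{2}A_{5}(2)$, $^{2}B_{2}(8)$, $D_{4}(2)$, $D_{n}(q)$ ($n\ge 4$ even, $q$ odd) and $^{2}E_{6}(2)$.

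Third layer (the non-cyclic exceptions). For each of these groups $H_{2}(G)$ decomposes canonically as $H_{2}(G)_{2}\oplus H_{2}(G)_{3}$ into its $2$- and $3$-primary parts; both are characteristic and hence $\text{\rm Out}(G)$-stable, so counting invariant subgroups distributes as a product over the two primary components. In every case except $D_{n}(q)$ with $n$ even and $q$ odd, at least one primary component is cyclic and the other is either trivial or equal to its unique non-trivial torsion layer; a direct check using the known description of $\text{\rm Out}(G)$ and of its action on $H_{2}(G)$ (for which I would invoke the Atlas and the computations summarised in \cite{GLS1, Ka}) shows that the only invariant subgroups are the torsion layers, so again $|\text{\rm Idem}(G)|=\sigma_{0}(G)+1$. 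The only genuinely different row is $D_{n}(q)$ with $n$ even and $q$ odd, where $H_{2}(G)=\mathbb{Z}/2\oplus\mathbb{Z}/2$; this is the main obstacle of the proof. Here the universal central extension is a spin group and the three involutions of $H_{2}(G)$ are distinguished: two of them are the centres of the two half-spin quotients, swapped by the graph (or, for $D_{4}$, triality) automorphism, while the third, their product, is the fixed diagonal centre. Consequently the three subgroups of order $2$ split as one invariant orbit and one orbit of length $2$, which gives precisely $4=|\{0,\mathbb{Z}/2,\mathbb{Z}/2\oplus\mathbb{Z}/2\}|+1$ invariant subgroups (compared with only $2$ torsion layers). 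For $D_{4}(2)$ and $^{2}B_{2}(8)$ the analogous analysis shows that triality, respectively the field automorphism of order $3$, permutes the three involutions of $H_{2}(G)=\mathbb{Z}/2\oplus\mathbb{Z}/2$ transitively, so no extra invariant subgroup appears and one still gets $\sigma_{0}+1=3$. Assembling these case analyses yields the fifth column in every row.
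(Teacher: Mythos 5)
Your proposal is correct and follows essentially the same route as the paper: reduce via Corollary~\ref{cor covsimple} to counting $\text{Out}(G)$-invariant subgroups of $H_2(G)$, dispose of all rows with cyclic multiplier using Corollary~\ref{cor cyclictrivialaction}, and then treat the handful of non-cyclic cases by decomposing into primary parts and using the known action of $\text{Out}(G)$. The only difference is one of completeness rather than of method: where you write that ``a direct check shows'' the non-cyclic primary component has no invariant subgroups beyond its torsion layers, the paper actually carries out those checks with short elementary arguments (a fixed-point-free automorphism of order $3$ of $\mathbf{Z}/2\oplus\mathbf{Z}/2$ admits no invariant line; the splitting $V=[V,\psi]\oplus C_V(\psi)$ rules out all but $0$, the Frattini subgroup and $V$ in $\mathbf{Z}/4\oplus\mathbf{Z}/4$; irreducibility of the faithful order-$4$ action on $\mathbf{Z}/3\oplus\mathbf{Z}/3$), and it sources the relevant $\text{Out}(G)$-actions from~\cite{GLS3} rather than~\cite{GLS1}.
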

 \begin{proof}
 The proposition can be derived by elementary arguments from~\cite[6.3.1]{GLS3}. For self containment we
 present these elementary arguments below.

 We need to explain the table only for the groups whose Schur multiplier is not cyclic. There are just  seven such cases.
 \smallskip

 \noindent
 {\bf Cases:} $^2\text{\rm A}_5(2)$, $^2\text{\rm B}_2(8)$, $\text{\rm D}_{4}(2)$, $^2\text{\rm E}_{6}(2)$.
 In all of these cases the Schur multiplier is of the form ${\mathbf Z}/2\oplus {\mathbf Z}/2$ or
 ${\mathbf Z}/2\oplus {\mathbf Z}/2 \oplus  {\mathbf Z}/3$.  To prove the proposition we need to find  all the invariant
 subgroups
 of the 2-torsion part. In all of these cases, according
 to~\cite[6.3.1]{GLS3}, the group  $\text{Out}(G)$ contains ${\mathbf Z}/3$ which acts faithfully on the $2$-torsion part ${\mathbf Z}/2\oplus {\mathbf Z}/2$.  However if $\psi:{\mathbf Z}/2\oplus {\mathbf Z}/2\ra {\mathbf Z}/2\oplus {\mathbf Z}/2$
 is an automorphism such that $\psi\not=\text{id}$ and $\psi^3=\text{id}$, then
$\psi$ has no eigenvectors. Consequently  the only subgroups of  ${\mathbf Z}/2\oplus {\mathbf Z}/2$
which are invariant under $\psi$ are the trivial subgroup and  the whole group.
\medskip

\noindent
{\bf Case:} $\text{\rm A}_2(4)$. In this case the Schur multiplier is isomorphic to
${\mathbf Z}/4\oplus {\mathbf Z}/4 \oplus  {\mathbf Z}/3$. To prove the proposition we need to understand
the invariant subgroups of the $4$-torsion part $V:={\mathbf Z}/4\oplus {\mathbf Z}/4$.  Again
according to~\cite[6.3.1]{GLS3}, the group  $\text{Out}(\text{A}_2(4))$ contains ${\mathbf Z}/3$ which acts faithfully on
${\mathbf Z}/4\oplus {\mathbf Z}/4$. Let  $\psi: V\ra V$ be an automorphism of order $3$.  We claim that the only
 $\psi$-invariant subgroups in $V$ are the trivial subgroup, the
Frattini subgroup $\Phi\cong {\mathbf Z}/2\oplus {\mathbf Z}/2$, and the whole group $V$.
We have $V=[V,\psi]\oplus C_V(\psi)$ and so if $C_V(\psi)$ is non-trivial,
then $[V,\psi]\cong {\mathbf Z}/4$ contradicting the fact that $\psi$
is faithful on $[V,\psi]$.  Hence $C_V(\psi)$ is trivial.
Let $K$ be a $\psi$-invariant subgroup.  If $|K|=2$ or $4$ and $K\ne\Phi,$
then $K$ is cyclic and hence it is centralized by $\psi$, a contradiction.
If $|K|=8$, then $K\cong {\mathbf Z}/4\oplus {\mathbf Z}/2$, so $\psi$ centralizes
the Frattini subgroup of $K$, a contradiction.
\medskip

 \noindent
 {\bf Case:} $^2\text{\rm A}_3(3)$. In this case the Schur multiplier is isomorphic to
  ${\mathbf Z}/4\oplus {\mathbf Z}/3\oplus {\mathbf Z}/3$. To prove the proposition we need to
  understand the invariant subgroup of the $3$-torsion part. According to~\cite[6.3.1]{GLS3}, the group  $\text{Out}(^2\text{A}_3(3))$ contains ${\mathbf Z}/4$ which acts faithfully on  $V:={\mathbf Z}/3\oplus {\mathbf Z}/3$.
Let $\psi: V\ra V$ be an
automorphism of order $4$ acting faithfully.  If there would be a proper non-trivial $\psi$-invariant
subspace $W\subset V$, then $V$ would split as a direct sum $V=W\oplus U$ with $U$
$\psi$-invariant.  But then $\psi^2$ would centralize $V$, a contradiction.
Hence the only $\psi$-invariant subspaces are the trivial one and $V$.
 \medskip

 \noindent
 {\bf Case:} $\text{\rm D}_{n}(q)$, $n\geq 4$,
$q$ odd, $n$ even. In this case the Schur multiplier is isomorphic to ${\mathbf Z}/2\oplus {\mathbf Z}/2$.
According to~\cite[6.3.1]{GLS3}, after an appropriate choice of a base,   automorphisms of  $\text{\rm D}_{n}(q)$ act
on the Schur multiplier  ${\mathbf Z}/2\oplus {\mathbf Z}/2$ either as  the identity or the transposition.  Moreover there is an element that does act as a transposition.
It follows that, with this choice of a base, the invariant subgroups are: the trivial subgroup, the diagonal, and  the whole group.
 \end{proof}
%%%%%%%%%%%%%%%%%%%%%%%%%%%%%%%%%%%%%%%%%%%%%%%%%%%%%%%
%%%%%%%%%%%%%%%%%%%%%%%%%%%%%%%%
%%%%%%%%%%%%%%%%%%%%%%%%%%%%%%%

\end{document}